\newtheorem{theorem}{Theorem}[section]
\newtheorem{conjecture}[theorem]{Conjecture}
\newtheorem{corollary}[theorem] {Corollary}
\newtheorem{definition}[theorem]{Definition}
\newtheorem{lemma} [theorem]{Lemma}
\newtheorem{notation}[theorem]{Notation}
\newtheorem{proposition}[theorem]{Proposition}
\newtheorem{remark}[theorem]{Remark}
\newtheorem{question}[theorem]{Question}
\numberwithin{equation}{section}
\newcommand{\F}{\mathbb{F}}
\newcommand{\Q}{\mathbb{Q}}
\newcommand{\C}{\mathbb{C}}
\newcommand{\GL}{{\rm GL}}
\newcommand{\Hom}{{\rm Hom}}
\newcommand{\Ind}{{\rm Ind}}
\newcommand{\Norm}{{\rm Norm}}
\newcommand{\Gal}{{\rm Gal}}
\newcommand{\tr}{{\rm tr}}
\newcommand{\Res}{{\rm Res}}
\newcommand{\modulo}{{\rm mod}}
\newcommand{\Ol}{\mathfrak{o}_l}
\newcommand{\OO}{\mathfrak{o}}
\newcommand{\cO}{\mathfrak{o}}
\newcommand{\Span}{{\rm Span}}
\newcommand{\Stab}{{\rm Stab}}
\title{On degenerate Whittaker space for $\GL_4(\cO_2)$}  
\author{Ankita Parashar}
\address{Department of Mathematics, IIT Delhi, Hauz Khas, New Delhi - 110016}
\email{ankitaparashar216@gmail.com}
\author{Shiv Prakash Patel}
\address{Department of Mathematics, IIT Delhi, Hauz Khas, New Delhi - 110016} 
\address{Department of Mathematics, IIT Dharwad, Chikkamalligawad Village, Dharwad - 580007}
\email{shivprakashpatel@gmail.com}
\keywords{Degenerate Whittaker space, Prasad's conjecture, Regular representations}
\subjclass[2020]{Primary: 20G25, Secondary: 20G05, 20C15}
\begin{document}
\begin{abstract}
Let $\cO_2$ be a finite principal ideal local ring of length 2. 
For a representation $\pi$ of $\GL_{4}(\cO_2)$, the degenerate Whittaker space $\pi_{N, \psi}$ is a representation of $\GL_2(\cO_2)$.
We describe $\pi_{N, \psi}$ explicitly for an irreducible strongly cuspidal representation $\pi$ of $\GL_4(\cO_2)$. 
This description verifies a special case of a conjecture of Prasad. We also prove that $\pi_{N, \psi}$ is a multiplicity free representation.
\end{abstract}

\maketitle

\section{Introduction} \label{Sec:1}
Let $F$ be a finite unramified extension of $\Q_p$ or $\F_{p}((t))$. 
Let $\mathfrak{o}$ be the ring of integers of $F$ with a uniformizer $\varpi$. 
For any positive integer $l$, let $\mathfrak{o}_l:=\mathfrak{o}/(\varpi^l)$.
Then $\cO_l$ is a finite principal ideal local ring of length $l$.
Note that $\mathfrak{o}/(\varpi) \cong \mathbb{F}_q$ is a finite field of order $q=p^f$ of characteristic $p>0$. 
If the characteristic of $F$ is zero then $\cO_{l}$ is the the ring of Witt vectors over $\F_{q}$ of length $l$ and, if the characteristic of $F$ is $p$ then $\cO_{l}$ is isomorphic to $\F_{q}[t]/(t^l)$.
We will be working with both the rings simultaneously and our proofs are uniform for both the cases.
We assume that $p \neq 2$.

Let $\GL_{2n}(\mathfrak{o}_l)$ be the group of invertible $2n \times 2n$ matrices over the ring $\cO_{l}$. 
Let 
\begin{align*}
 & P_{n,n} = \left\{ \left( \begin{matrix} g & X \\ 0 & g' \end{matrix} \right) : g, g' \in \GL_{n}(\cO_l), X \in M_{n}(\cO_{l}) \right\} 
  \text{~and~}
  \\
 &
 N = \left\{ \left( \begin{matrix} I & X \\ 0 & I \end{matrix} \right) : X \in M_{n}(\cO_l)  \right\} \cong M_{n}(\cO_l).
\end{align*}
Let $\psi_0 : \mathfrak{o}_l \rightarrow \mathbb{C}^\times$ be a non-trivial additive character which is also non-trivial when restricted to the subgroup $\varpi^{l-1} \mathfrak{o}_l$ for $l > 1$.
Let $\psi : M_{n}(\cO_l) \rightarrow \mathbb{C}^{\times}$ be the additive character defined by $\psi(X) = \psi_0(\tr(X))$ for all $X \in M_{n}(\cO_l)$, where $\tr(X)$ denotes the trace of $X$. 
Since $M_n(\cO_l) \cong N$, we treat $\psi$ as a character of $N$ as well.
Let $(\pi, V)$ be an irreducible representation of $\GL_{2n}(\cO_l)$. 
Define $V_{N,\psi} := \{ v \in V : \pi(X)v=\psi(X)v \,\, \forall X \in N \}$ which is the subspace of $V$ on which $N$ operates by the character $\psi$. 
The space $V_{N, \psi}$ is called the $(N, \psi)$-Whittaker space of $\pi$, which we call degenerate Whittaker space of $\pi$. 
Since $\tr(gXg^{-1}) = \tr(X)$ for all $g \in \GL_{n}(\mathfrak{o}_{l})$ and $X \in M_{n}(\cO_l)$, it follows that $\GL_n(\cO_l)$ sitting inside the subgroup $P_{n,n}$ as $g \mapsto \left( \begin{matrix} g & 0 \\ 0 & g \end{matrix} \right)$ stabilizes  the character $\psi$ of $N$. 
Therefore, $\GL_n(\cO_l)$ operates on the space $V_{N,\psi}$ giving rise to a representation of $\GL_n(\cO_l)$ which we denote by $\pi_{N, \psi}$.
For a representation $(\pi, V)$, we often omit $V$ and write only $\pi$.
In this paper, we will be concerned with the following question.
\begin{question} \label{main question}
Given an irreducible representation $\pi$ of $\GL_{2n}(\mathfrak{o}_{l})$, what is the degenerate Whittaker space $\pi_{N, \psi}$ as a representation of $\GL_n(\mathfrak{o}_l)$?
\end{question}
The above question is a variant of the question considered by Prasad \cite{Prasad2000} for  $l=1$. 
Prasad’s work has inspired  several related works, including ours.
For example, when $l=1$ Balasubramanian-Khurana \cite{Himanshi2022, Himanshi} consider a variant by taking  different characters $\psi$ for $n=2, 3$  and Balasubramanian-Dangodara-Khurana \cite{balasubramanian} consider analogous variant for general $n$. 
Another variant, when the ring $\cO_l$ is replaced by a non-Archimedean local field, this question has been considered by Prasad \cite{Prasad2001} for $\GL_4$ and more recently Pandey-Venketasubramanian \cite{Sanjeev2024} for $Sp_4$.

Now we briefly describe the work of Prasad \cite{Prasad2000} for $l=1$. 
Note that $\mathfrak{o}_1 \cong \mathbb{F}_q$. 
In this case, $(N, \psi)$-degenerate Whittaker space $\pi_{N,\psi}$ is a representation of $\GL_{n}(\mathbb{F}_q)$. 
Recall that for any positive integer $n$, we have $\mathbb{F}_{q^n} ^\times\hookrightarrow \GL_{n}(\mathbb{F}_{q})$ and the embedding is unique up to conjugation.
By the work of Green \cite{Green1955}, it is well known that every irreducible cuspidal representation of $\GL_n(\F_q)$ is associated with a primitive character  of $\F_{q^n}^{\times}$.
Here is the theorem of Prasad \cite[Theorem 1]{Prasad2000}. 
\begin{theorem} \label{DP theorem}
Let $\pi$ be an irreducible cuspidal representation of $\GL_{2n} (\mathbb{F}_q)$ obtained from a primitive character $\theta: \mathbb{F}_{q^{2n}}^{\times} \rightarrow \mathbb{C}^{\times}$. 
Then, as a representation of $\GL_{n}(\mathbb{F}_{q})$,   
$$\pi_{N, \psi} \cong \Ind_{\mathbb{F}_{q^n}^{\times}}^{\GL_{n}(\mathbb{F}_{q})} ( \theta |_{\mathbb{F}_{q^n}^{\times}} ).$$     
\end{theorem}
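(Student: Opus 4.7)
My strategy is to compare the characters of the two sides at each $g \in \GL_n(\F_q)$. The $(N,\psi)$-projection yields the formula
\begin{equation*}
\chi_{\pi_{N,\psi}}(g) \;=\; \frac{1}{q^{n^2}} \sum_{X \in M_n(\F_q)} \psi_0(-\tr X)\, \chi_{\pi_\theta}\!\left(\begin{pmatrix} g & gX \\ 0 & g \end{pmatrix}\right),
\end{equation*}
while by Frobenius' induced-character formula,
\begin{equation*}
\chi_{\Ind_{\F_{q^n}^\times}^{\GL_n(\F_q)}(\theta|_{\F_{q^n}^\times})}(g) \;=\; \frac{1}{q^n - 1}\sum_{\substack{h \in \GL_n(\F_q) \\ h^{-1}gh \in \F_{q^n}^\times}} \theta(h^{-1}gh).
\end{equation*}
The induced character vanishes unless $g$ is conjugate to an element of the embedded torus $\F_{q^n}^\times \hookrightarrow \GL_n(\F_q)$, in which case it reduces to $\sum_{\tau \in \Gal(\F_{q^n}/\F_q)} \theta(\tau(\alpha))$ for the $\alpha \in \F_{q^n}^\times$ corresponding to $g$.

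Next, I would invoke Green's explicit character formula for $\pi_\theta$, which expresses $\chi_{\pi_\theta}$ at an element as a signed Galois-orbit sum of $\theta$ evaluated at its eigenvalues, modulated by Green polynomials encoding the Jordan type. The element $y_X := \left(\begin{smallmatrix} g & gX \\ 0 & g \end{smallmatrix}\right)$ has characteristic polynomial $\chi_g(t)^2$, so its eigenvalues are those of $g$ with algebraic multiplicity doubled, while its Jordan type varies with $X$. I would use the substitution $X \mapsto X + g^{-1}Ag - A$, arising from conjugation of $y_X$ by $\left(\begin{smallmatrix} I & A \\ 0 & I \end{smallmatrix}\right)$, together with invariance of $\psi_0 \circ \tr$ under this translation, to reorganize the sum over orbits of smaller size.

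The key claim is that, after this reorganization, the nonzero contributions arise only from those $X$ such that $y_X$ is semisimple with all eigenvalues in $\F_{q^n}$. The primitivity of $\theta$ as a character of $\F_{q^{2n}}^\times$ then forces $g$ itself to be conjugate into $\F_{q^n}^\times$ whenever the sum is nonzero. On such $g$, the $\Gal(\F_{q^{2n}}/\F_q)$-orbit of the eigenvalue $\alpha$ breaks into two $\Gal(\F_{q^n}/\F_q)$-orbits whose $\theta$-sums combine to give exactly $\sum_\tau \theta(\tau(\alpha))$, matching the induced character.

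The main obstacle is the Jordan-type bookkeeping: $y_X$ is generically non-semisimple, and the Green polynomials $Q^\lambda_T$ enter the character formula with weights depending on the Jordan type, so I must show that the non-semisimple contributions cancel against the oscillating weight $\psi_0(-\tr X)$. A possibly cleaner alternative is to identify $\pi_\theta = \pm R_{T_{\mathrm{ell}}}^{\theta}$ via Deligne--Lusztig induction from the Coxeter torus $T_{\mathrm{ell}} \cong \F_{q^{2n}}^\times$, and then compute the action of the twisted Jacquet functor $(\,\cdot\,)_{N,\psi}$ on $R_{T}^{\theta}$ using the Deligne--Lusztig character formula on the anisotropic torus, sidestepping much of the Jordan-type combinatorics.
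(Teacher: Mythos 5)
This statement is Prasad's theorem, which the paper cites from \cite{Prasad2000} without reproducing the proof. The paper only remarks that Prasad's argument is a character-theoretic one, so there is no in-paper proof to compare against; I will therefore assess your proposal on its own terms.

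Your broad strategy --- computing $\Theta_{\pi_{N,\psi}}$ via the projection formula and matching it against the Frobenius formula for the induced character --- is sound and is in fact aligned with the method the authors attribute to Prasad. Your projection formula is correct (after the substitution $X \mapsto gX$ it agrees with the formula used in Corollary \ref{Greens formula}), and the observation that $y_X = \left(\begin{smallmatrix} g & gX \\ 0 & g\end{smallmatrix}\right)$ has characteristic polynomial $\chi_g(t)^2$ is the right starting point.

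However, there is a genuine gap at exactly the point you flag. Your ``key claim'' --- that the nonzero contributions reduce to those $X$ for which $y_X$ is semisimple with eigenvalues in $\F_{q^n}$ --- cannot hold termwise: a cuspidal character of $\GL_{2n}(\F_q)$ is generically nonzero on non-semisimple elements whose characteristic polynomial is a power of a single irreducible (e.g.\ on regular unipotent elements it evaluates to $\pm 1$, not $0$), so for regular elliptic $g$ almost every $X$ gives a nonvanishing $\chi_{\pi_\theta}(y_X)$. What you actually need is an aggregate cancellation, driven by summing the Green-polynomial coefficients against the oscillating factor $\psi_0(-\tr X)$ over the fibres of the Jordan-type map $X \mapsto \text{(unipotent type of $y_X$)}$. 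The substitution $X \mapsto X + g^{-1}Ag - A$ collapses the sum to orbit representatives but does not by itself produce that cancellation, and no argument is given for it. This cancellation is not a technicality; it is essentially the whole content of the theorem. Until that step is supplied --- either by the explicit Green/Springer-Zelevinsky combinatorics or by the Deligne--Lusztig computation you gesture at in your last paragraph --- the proposal is an outline rather than a proof.
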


For $n=1$ and $\pi$ a regular representation of $\GL_2(\cO_2)$, the Question \ref{main question} is answered in a work of the second author with P. Singla \cite{Sp2021, SP2022}. 
In this case, $\pi_{N, \psi}$ is a representation of $\GL_1(\cO_l)$ which is isomorphic to the center of $\GL_2(\cO_l)$.
More precisely, we have the following result \cite[Theorem 1.1]{SP2022}.
\begin{theorem}
Let $\pi$ be an irreducible  regular representation of $\GL_2(\mathfrak{o}_l)$ with central character $\omega_{\pi}$, then $\pi_{N, \psi}$ is one dimensional and $\pi_{N, \psi} \cong \omega_{\pi}$.
\end{theorem}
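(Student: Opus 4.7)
The first and automatic half of the theorem is the identification of the $\GL_1(\cO_l)$-action. Indeed, for every $a \in \cO_l^{\times}$ the matrix $\diag(a,a)$ lies in the center of $\GL_2(\cO_l)$, so it acts on the whole representation $\pi$---and in particular on $\pi_{N,\psi}$---by the scalar $\omega_\pi(a)$. Hence once $\pi_{N,\psi}$ is known to be one-dimensional, the isomorphism $\pi_{N,\psi} \cong \omega_\pi$ follows at once. The real content is the dimension statement, and this is what I would focus on.

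For $l=1$ the claim is classical: an irreducible representation of $\GL_2(\F_q)$ is regular iff it is not one-dimensional iff it is generic, and every generic representation of $\GL_2(\F_q)$ has a unique Whittaker model. For $l \ge 2$ my plan is to invoke the explicit Clifford-theoretic description of regular representations. Let $K^{i} := I + \varpi^i M_2(\cO_l)$, so that $K^{\lceil l/2\rceil}$ is abelian because its square already lies in $K^{l} = \{I\}$. Its characters are parameterized, via $\psi_0 \circ \tr$ and a natural identification, by matrices $A \in M_2(\cO_{\lfloor l/2\rfloor})$; an irreducible $\pi$ is \emph{regular} precisely when the characters of $K^{\lceil l/2\rceil}$ appearing in $\pi$ correspond to matrices $A$ that are regular mod $\varpi$ (i.e.\ with distinct eigenvalues over the algebraic closure of $\F_q$). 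Standard Clifford theory then realises $\pi = \Ind_{H}^{\GL_2(\cO_l)} \rho_A$, where $H = \Stab_{\GL_2(\cO_l)}(\psi_A)$ is explicit (containing $K^{\lceil l/2\rceil}$ with quotient a lift of the centralizer of $A$), and $\rho_A$ is an explicit extension of $\psi_A$, via a small Heisenberg step in the even-$l$ case.

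Granted this description, I would compute $\dim \pi_{N,\psi}$ by Mackey's formula,
\[
\dim \pi_{N,\psi} \;=\; \sum_{g \in H \backslash \GL_2(\cO_l) / N} \dim \Hom_{H^{g} \cap N}\bigl(\rho_A^{\,g},\, \psi\bigr).
\]
The crux---and the main obstacle---is to prove that exactly one double coset contributes, and that its contribution has dimension one. The compatibility condition $\rho_A^{\,g}|_{H^{g}\cap N} = \psi|_{H^{g}\cap N}$ forces $gAg^{-1}$ to be upper-triangular with trace prescribed by $\psi_0$, and the regularity hypothesis on $A$ is exactly what makes the $H$-orbit of such a conjugate unique. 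I would split the argument into the split and non-split regular cases to keep track of the shape of $H$: in the split case $H$ is (an extension of) a split torus by $K^{\lceil l/2\rceil}$ and the double-coset enumeration is essentially one for $\GL_2$ over a smaller ring; in the non-split case $H$ is (an extension of) an unramified anisotropic torus, and one uses that a non-split regular $A$ cannot be conjugated into the space of traceless upper-triangular matrices, which cuts the analysis down to a single coset. In each situation the Hom on the surviving coset reduces to an abelian-group computation on $N \cap H^g$, yielding the desired one-dimensionality.
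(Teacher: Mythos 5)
The statement is quoted in the paper as \cite[Theorem~1.1]{SP2022}; the present paper supplies no proof of it, so there is no in-paper argument to compare against. Judged on its own terms, the framework you adopt---Clifford theory to write $\pi$ as induced from the inertia group of a regular character, followed by a Mackey double-coset analysis of $\pi|_N$---is the natural one and is the same machinery this paper uses for $\GL_4(\cO_2)$ in Sections~3--5. But the outline contains a substantive error at its crux and elides two genuine difficulties.

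The central error is the description of the Mackey compatibility condition. You say that matching $\rho_A^{\,g}$ with $\psi$ on $H^g\cap N$ ``forces $gAg^{-1}$ to be upper-triangular'', and in the non-split case invoke the fact that a non-split $A$ has no upper-triangular conjugates. Read literally, this would prove $\pi_{N,\psi}=0$ for strongly cuspidal $\pi$, the opposite of what is asserted. The actual constraint coming from restricting to $N\cap K^{\lceil l/2\rceil}$ is that the lower-left entry $(g^{-1}Ag)_{21}$ equal a fixed \emph{nonzero} unit determined by the normalization of $\psi_0$; that is, $g^{-1}Ag$ must land in a particular nontrivial coset of the upper-triangular matrices, not in the upper-triangular matrices themselves. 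Uniqueness of the contributing double coset is then an elementary but genuine count: among matrices in the conjugacy class of a regular $A$ with the prescribed $(2,1)$-entry, conjugation by the upper unipotent is simply transitive. That count is exactly where the regularity hypothesis is used and is the real content of the theorem; it is absent from the sketch. Two further points. The parity of the Heisenberg step is backwards: the Weil/Heisenberg construction is required when $l$ is \emph{odd}, not even, which is why the paper states Theorem~\ref{constrcution} only for even $l$ and defers odd $l$ to \cite{Singla2018}; for odd $l$ the extension $\rho_A$ is not one-dimensional, so the condition on a single matrix entry no longer determines the Hom space and the argument must change shape. Finally, in the split non-semisimple case $\bar{H}$ contains a full unipotent subgroup, so $H^g\cap N$ is not always contained in $K^{\lceil l/2\rceil}$, and the ``abelian computation on $N\cap H^g$'' is less uniform across the three regular types than the outline suggests.
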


A more general version of Theorem \ref{DP theorem} has been conjectured by Prasad which we state below.
Let $L$ (respectively, $E$) be an unramified extensions of $F$ of degree $n$ (respectively, $2n$) and $\mathfrak{O}$ (respectively, $\mathcal{O}$) the ring of integers of $L$  (respectively, $E$). 
Write $\mathfrak{O}_l = \mathfrak{O}/(\varpi^l)$ and $\mathcal{O}_l = \mathcal{O}/ (\varpi^l)$ 
then $\mathfrak{O}_{l}^{\times} \hookrightarrow \GL_{n}(\cO_l)$ and $\mathcal{O}_{l}^{\times} \hookrightarrow \GL_{2n}(\cO_l)$.
We also have $\cO_{l} \subset \mathfrak{O}_{l} \subset \mathcal{O}_{l}$.
It is known from Lusztig \cite{Lusztig2004} and Aubert-Onn-Prasad \cite{AM2010} that a strongly cuspidal representation of $\GL_{2n}(\cO_{l})$ is obtained from a strongly primitive character $\theta : \mathcal{O}_{l}^\times \rightarrow \C^{\times}$ (see Section \ref{Sec:2} for more details).   
\begin{conjecture} [Prasad] \label{DP conjecture}
Let $\pi$ be an irreducible strongly cuspidal representation of $\GL_{2n}(\cO_{l})$ associated to a strongly primitive character $\theta : \mathcal{O}_{l}^{\times} \rightarrow \C^{\times}$. Then as a representation of $\GL_{n}(\cO_{l})$, we have 
$$
\pi_{N, \psi} \cong \Ind_{\mathfrak{O}_{l}^{\times}}^{\GL_{n}(\cO_{l})} (\theta|_{\mathfrak{O}_{l}^{\times}}).
$$
\end{conjecture}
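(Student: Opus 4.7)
The plan is to combine the explicit Clifford-theoretic construction of strongly cuspidal representations with a Mackey-orbit analysis, then match dimensions and identify the $\GL_n(\cO_l)$-action. Following Hill and Aubert-Onn-Prasad, $\pi$ admits a model $\pi \cong \Ind_H^{\GL_{2n}(\cO_l)} \widetilde{\theta}$, where $H = \mathcal{O}_l^\times \cdot K^{\lceil l/2 \rceil}$ with $K^m = 1+\varpi^m M_{2n}(\cO_l)$, and $\widetilde{\theta}$ is the canonical extension of $\theta$ built from $\psi_0 \circ \tr$ on the pro-$p$ part; for odd $l$ one further uses a Heisenberg extension on $K^m/K^{m+1}$, which is a genuine representation since $p \neq 2$.

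By Frobenius reciprocity,
\[
\pi_{N,\psi} \;=\; \Hom_N(\psi, \pi) \;\cong\; \bigoplus_{g \,\in\, N \backslash \GL_{2n}(\cO_l) / H} \Hom_{N \,\cap\, {}^g H}\bigl(\psi,\; {}^g \widetilde{\theta}\bigr),
\]
an isomorphism of vector spaces. The embedded copy of $\GL_n(\cO_l)$ given by $g \mapsto \diag(g,g)$ normalizes $N$, stabilizes $\psi$, and permutes the double cosets, so the $\GL_n(\cO_l)$-module structure on $\pi_{N,\psi}$ is inherited from this action together with its action on each individual Hom-space.

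The core task is to identify the double cosets contributing non-trivially and to determine the induced $\GL_n(\cO_l)$-action. A representative $g$ contributes only if $\psi$ and ${}^g\widetilde{\theta}$ agree on $N \cap {}^g H$. Strong primitivity of $\theta$ supplies a defining element $\beta \in \mathcal{O}_l$ whose reduction modulo $\varpi$ generates $\F_{q^{2n}}/\F_q$; the matching condition with $\psi_0 \circ \tr$ should isolate a single $\GL_n(\cO_l)$-orbit of surviving cosets with stabilizer equal to $\mathfrak{O}_l^\times$ (arising from the intermediate subring $\mathfrak{O}_l \subset \mathcal{O}_l$). The induced character on this stabilizer should be exactly $\theta|_{\mathfrak{O}_l^\times}$, yielding the conjectured $\Ind_{\mathfrak{O}_l^\times}^{\GL_n(\cO_l)}(\theta|_{\mathfrak{O}_l^\times})$. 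For a numerical consistency check, one verifies $\dim \pi_{N,\psi} = [\GL_n(\cO_l) : \mathfrak{O}_l^\times]$ via the trace identity $\dim \pi_{N,\psi} = |N|^{-1}\sum_{X \in N} \chi_\pi(X)\psi_0(-\tr X)$ together with known character values for strongly cuspidal $\pi$.

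The principal obstacle is this orbit analysis, which becomes delicate in two directions. When $l$ is odd, the Heisenberg-Weil ingredient in $\widetilde{\theta}$ complicates matching against the abelian $\psi$, requiring either an explicit Weil-character calculation or a polarization argument compatible with the block decomposition $N \subset P_{n,n}$. As $l$ grows, the combinatorics of $N \backslash \GL_{2n}(\cO_l)/H$ becomes substantially richer, so isolating the unique surviving $\GL_n(\cO_l)$-orbit requires careful analysis of adjoint orbits on $M_{2n}(\cO_l)$ parametrized by characteristic polynomials modulo varying powers of $\varpi$. The present paper's focus on $n=2$, $l=2$ reflects the regime where this analysis remains hands-on tractable; extending to general $(n,l)$ appears substantially more demanding.
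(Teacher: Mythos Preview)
The statement you are addressing is a conjecture that the paper establishes only for $n=2$, $l=2$; your proposal is accordingly a strategy sketch rather than a proof, and you acknowledge as much. That is fine, but the central mechanism you propose is contradicted by what the paper actually finds in the case it treats.

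Your key expectation is that the surviving $(N,H)$-double cosets form a \emph{single} $\GL_n(\cO_l)$-orbit with stabilizer $\mathfrak{O}_l^\times$, so that $\pi_{N,\psi}$ is directly realized as $\Ind_{\mathfrak{O}_l^\times}^{\GL_n(\cO_l)}(\theta|_{\mathfrak{O}_l^\times})$. This is false already for $n=2$, $l=2$. The paper decomposes via $T\backslash G/P$ (with $T=\mathcal{O}_2^\times K_2^1$, $P=P_{2,2}$), obtaining $q+1$ cosets of which exactly $q$ contribute nontrivially, and proves that the resulting summands $\pi^{\delta}_{N,\psi}$ are pairwise disjoint as $\GL_2(\cO_2)$-modules (their $J_2^1$-restrictions are supported on distinct conjugacy classes $\mathcal{B}_\delta$). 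Since the diagonal $\GL_n$ together with $N$ sits inside $P$, any $\GL_n$-orbit of surviving $(N,H)$-cosets is contained in a single $(T,P)$-coset; hence there are at least $q$ such orbits, not one. The stabilizers are also not uniformly $\mathfrak{O}_l^\times$: for $\delta=I$ one sees $\mathfrak{O}_2^\times$ appearing, but for $\delta\neq I$ the relevant intersections drop to the center.

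What the paper does instead is indirect: it computes each $\pi^\delta_{N,\psi}$ explicitly (character values on $J_2^1$ and on $Z\cdot J_2^1$, identifying the regular constituents and showing multiplicity one), then separately decomposes $\Pi=\Ind_{\mathfrak{O}_2^\times}^{\GL_2(\cO_2)}(\theta|_{\mathfrak{O}_2^\times})$ by Mackey against the inertia groups $I(\phi_B)$, and finally matches the two lists of irreducible constituents. No natural isomorphism is produced; the equality is established at the level of characters and constituent multiplicities. Your dimension check $\dim\pi_{N,\psi}=[\GL_n(\cO_l):\mathfrak{O}_l^\times]$ does hold, but it is one ingredient among many rather than the output of a single-orbit identification.
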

We prove the above conjecture of Prasad for $n=2, l=2$, which is one of the main results of this paper.
\begin{theorem} \label{main theorem}
The Conjecture \ref{DP conjecture} is true for $n=2$ and $l=2$.
\end{theorem}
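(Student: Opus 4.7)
The plan is to exploit the explicit Clifford-theoretic construction of the strongly cuspidal $\pi = \pi_\theta$. For $l = 2$, the congruence subgroup $K^1 := I + \varpi M_4(\cO_2)$ is abelian and identifies with $M_4(\F_q)$ as an additive group, so its characters are parametrized by $\beta \in M_4(\F_q)$ via $\psi_\beta(I + \varpi X) = \tilde\psi_0(\trace(\beta \bar X))$, where $\tilde\psi_0$ denotes the character of $\F_q$ induced by $\psi_0$ on $\varpi\cO_2$. By the work of Lusztig and Aubert--Onn--Prasad, $\pi$ comes from an elliptic regular $\beta \in M_4(\F_q)$ (characteristic polynomial irreducible of degree $4$), and takes the form
\[
\pi \;\cong\; \Ind_H^{\GL_4(\cO_2)} \tilde\theta, \qquad H := \mathcal{O}_2^\times K^1,
\]
where $\mathcal{O}_2^\times \hookrightarrow \GL_4(\cO_2)$ is the unique lift of the centralizer $\F_{q^4}^\times = C_{\GL_4(\F_q)}(\beta)$, and $\tilde\theta$ restricts to $\theta$ on $\mathcal{O}_2^\times$ and to $\psi_\beta$ on $K^1$.

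First I would compute $\chi_\pi|_N$ via the Frobenius character formula. Because $\mathcal{O}_2^\times$ reduces mod $\varpi$ to $\F_{q^4}^\times \subset \GL_4(\F_q)$, which has no nontrivial unipotent elements, any $\GL_4(\cO_2)$-conjugate of a nontrivial $n \in N$ that lands in $H$ must in fact land in $K^1$; normality of $K^1$ then forces $n \in N \cap K^1 =: N^1$. Parametrizing $N^1 \cong M_2(\F_q)$ by $n = I + \varpi\left(\begin{smallmatrix} 0 & \bar Y \\ 0 & 0\end{smallmatrix}\right) \leftrightarrow \bar Y$, the Frobenius formula reduces to
\[
\chi_\pi(n) \;=\; \sum_{\beta' \in O_\beta} \tilde\psi_0\bigl(\trace(C(\beta')\bar Y)\bigr),
\]
where $O_\beta$ is the $\GL_4(\F_q)$-conjugacy class of $\beta$ and $C(\beta')$ is the lower-left $2\times 2$ block of $\beta'$. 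Since $\psi(n)^{-1} = \tilde\psi_0(-\trace(\bar Y))$ on $N^1$, averaging $\chi_\pi(n)\psi(n)^{-1}$ over $N$ makes the sum over $\bar Y$ collapse to a Kronecker delta, leaving
\[
\dim \pi_{N,\psi} \;=\; q^{-4}\,\#\bigl\{\beta' \in O_\beta : C(\beta') = I\bigr\}.
\]
Identifying $\F_q^4$ with the $\F_{q^4}$-line defined by $\beta$, I would parametrize this set by pairs $(v_1, v_2) \in \F_{q^4}^2$ making $\{v_1, v_2, \beta v_1, \beta v_2\}$ an $\F_q$-basis of $\F_{q^4}$, modulo the free action of $\F_{q^4}^\times$. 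The condition on $u := v_2/v_1 \in \F_{q^4}^\times$ reduces to transversality of the $\F_q$-planes $S := \F_q\{1, \beta\}$ and $uS$; an elementary count yields $q^3(q-1)$ valid $u$, hence $\dim \pi_{N,\psi} = q^3(q-1) = [\GL_2(\cO_2):\mathfrak{O}_2^\times]$, matching the dimension of the right-hand side of the conjecture.

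To upgrade the equality of dimensions to an isomorphism of $\GL_2(\cO_2)$-modules, I would exhibit a nonzero $\mathfrak{O}_2^\times$-equivariant functional $\pi_{N,\psi} \to \C$ transforming by $\theta|_{\mathfrak{O}_2^\times}$. By Frobenius reciprocity this produces a nonzero $\GL_2(\cO_2)$-map into $\Ind_{\mathfrak{O}_2^\times}^{\GL_2(\cO_2)}(\theta|_{\mathfrak{O}_2^\times})$; the dimension match together with multiplicity-freeness of the target (verifiable via Mackey's criterion on the double cosets $\mathfrak{O}_2^\times \backslash \GL_2(\cO_2) / \mathfrak{O}_2^\times$ using regularity of $\theta|_{\mathfrak{O}_2^\times}$) then forces the map to be an isomorphism. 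Realizing $\pi$ as the space of functions $f : \GL_4(\cO_2) \to \C$ with $f(hx) = \tilde\theta(h) f(x)$, so that $\pi_{N,\psi}$ consists of those $f$ further satisfying $f(xn) = \psi(n) f(x)$ for $n \in N$, the desired functional is evaluation at a distinguished double-coset representative $g_0 \in H\backslash \GL_4(\cO_2)/N$ chosen so that $g_0^{-1}\iota(\mathfrak{O}_2^\times) g_0 \subset H$ and so that $\tilde\theta$ pulls back to $\theta|_{\mathfrak{O}_2^\times}$ under this conjugation; here $\iota : \GL_2(\cO_2) \hookrightarrow \GL_4(\cO_2)$ is the diagonal embedding.

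The main obstacle is this final identification step. Concretely, I need to choose a representative $\beta' \in O_\beta$ with $C(\beta') = I$ whose remaining $2\times 2$ blocks lie in the image of $\mathfrak{O}_2 \hookrightarrow M_2(\cO_2)$, so that the centralizer of $\beta'$ intersected with $\iota(\GL_2(\cO_2))$ reduces cleanly to $\iota(\mathfrak{O}_2^\times)$, and then verify that $\tilde\theta$ matches $\theta|_{\mathfrak{O}_2^\times}$ on both halves of the Teichm\"uller decomposition $\mathcal{O}_2^\times = T \cdot (1 + \varpi\mathcal{O}_2)$. The multiplicative part (lifts of $\F_{q^2}^\times \subset \F_{q^4}^\times$) matches essentially by construction, while the additive part $1 + \varpi\mathfrak{O}_2$ reduces to a trace-compatibility identity between $\trace_{E/F}$ and $\trace_{L/F}$ arising from the tower $F \subset L \subset E$.
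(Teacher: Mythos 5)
Your dimension count via the Frobenius formula and the class $O_\beta$ is a clean alternative to the paper's route (which decomposes $\pi|_P$ over $T\backslash G/P$ and sums dimensions coset by coset); both yield $\dim\pi_{N,\psi}=q^3(q-1)$. The problem lies in the final step, where you try to promote a dimension match to an isomorphism.

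The inference ``nonzero $\GL_2(\cO_2)$-map $\pi_{N,\psi}\to\Pi$, plus $\dim\pi_{N,\psi}=\dim\Pi$, plus $\Pi$ multiplicity free, hence isomorphism'' is not valid. A single nonzero intertwiner need not be injective or surjective: if $\pi_{N,\psi}\cong\sigma_1\oplus\sigma_2$ and $\Pi\cong\sigma_1\oplus\sigma_3$ with $\sigma_2\not\cong\sigma_3$ but $\dim\sigma_2=\dim\sigma_3$, then the target is multiplicity free, dimensions agree, and a nonzero map (kill $\sigma_2$, include $\sigma_1$) exists, yet the two representations are not isomorphic. Your evaluation-at-$g_0$ functional only detects that one particular constituent of $\pi_{N,\psi}$ (the one supporting that functional) also occurs in $\Pi$; it says nothing about the rest. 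To close the gap you would additionally need to know that $\pi_{N,\psi}$ is itself multiplicity free and that \emph{every} irreducible constituent of $\pi_{N,\psi}$ occurs in $\Pi$ (or vice versa), at which point the dimension count forces equality. This is exactly what the paper does, via an explicit constituent-by-constituent description: Section~\ref{Sec : 6} describes all irreducibles (with multiplicities) in $\pi_{N,\psi}$ by analysing each $\pi^\delta_{N,\psi}$ through its restriction to $J^1_2$ and its central character, proving multiplicity-freeness (Theorem~\ref{multiplicity one}) along the way; Section~\ref{Sec : 7} carries out the analogous analysis for $\Pi$; and Theorem~\ref{Complete description of pi_N,psi} is then matched against Theorem~\ref{Complete description of Pi}. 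That matching, not a single Whittaker functional, is the actual content of the theorem. Incidentally, your stated ``main obstacle'' --- choosing $g_0$ compatibly with both halves of the Teichm\"uller decomposition --- is real but secondary; even once $g_0$ is found, the logical jump to isomorphism would still fail for the reason above.
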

We would like to mention that a few steps, but not all, in the proof of this theorem also work for general $n$ and $l=2$.
Since, we could not achieve all the steps to prove the above conjecture of Prasad for general $n$ (even for $l=2$), we restrict ourselves to the case of $n=2$ and $l=2$.

In addition, we prove the following multiplicity one result.
\begin{theorem} \label{multiplicity one}
 Let $\pi$ be a strongly cuspidal  representation of $\GL_4(\cO_2)$. 
Then  $\pi_{N, \psi}$   is multiplicity free representation of $\GL_2(\cO_2)$, i.e. for any irreducible representation $\sigma$ of $\GL_2(\cO_2)$ we have
 $$ \dim\left( \Hom_{\GL_2(\cO_2)} \left( \pi_{N, \psi}, \sigma\right)  \right)\leq 1.$$
\end{theorem}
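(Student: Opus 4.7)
By Theorem \ref{main theorem}, $\pi_{N,\psi}$ is isomorphic to $\Ind_{\mathfrak{O}_2^\times}^{\GL_2(\cO_2)}(\theta|_{\mathfrak{O}_2^\times})$, so by Frobenius reciprocity the desired conclusion is equivalent to the inequality $\dim \Hom_{\mathfrak{O}_2^\times}(\theta|_{\mathfrak{O}_2^\times}, \sigma|_{\mathfrak{O}_2^\times}) \leq 1$ for every irreducible $\sigma$ of $\GL_2(\cO_2)$. The plan is to deduce this from the stronger fact that $\Ind_{\mathfrak{O}_2^\times}^{\GL_2(\cO_2)}(\chi)$ is multiplicity-free for \emph{every} character $\chi$ of $\mathfrak{O}_2^\times$, i.e.\ that $(\GL_2(\cO_2), \mathfrak{O}_2^\times)$ is a Gelfand pair. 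Note that this extra generality does not rely on strong primitivity of $\theta$, and by Gelfand's lemma it will follow once I exhibit an anti-involution $\tau$ of $\GL_2(\cO_2)$ that fixes $\mathfrak{O}_2^\times$ pointwise and stabilizes every $(\mathfrak{O}_2^\times, \mathfrak{O}_2^\times)$-double coset.

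To construct $\tau$, fix the embedding $\mathfrak{O}_2 \hookrightarrow M_2(\cO_2)$ realizing a generator $\alpha$ of $\mathfrak{O}_2$ over $\cO_2$ as its companion matrix $C$. A short computation produces a symmetric $\omega \in \GL_2(\cO_2)$ satisfying $C\omega = \omega C^T$; then $\tau(g) := \omega g^T \omega^{-1}$ is an anti-involution of $\GL_2(\cO_2)$ whose restriction to the image of $\mathfrak{O}_2^\times$ is the identity, and so every character of $\mathfrak{O}_2^\times$ is $\tau$-invariant. It remains to check that $\tau(g) \in \mathfrak{O}_2^\times \cdot g \cdot \mathfrak{O}_2^\times$ for every $g \in \GL_2(\cO_2)$. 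For this, I would first reduce modulo $\varpi$ to the classical pair $(\GL_2(\mathbb{F}_q), \mathbb{F}_{q^2}^\times)$, which is known to be a Gelfand pair and whose double cosets admit a short list of Cartan-type representatives on which $\bar\tau$-invariance is transparent. For each such mod-$\varpi$ coset, I would lift to $\GL_2(\cO_2)$ and analyze the fibre using a normal form under the two-sided $\mathfrak{O}_2^\times$-action, stratified by the $\varpi$-adic content of the off-torus entries.

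The principal obstacle is precisely this double-coset enumeration over $\cO_2$. Since both acting groups are copies of the narrow elliptic torus $\mathfrak{O}_2^\times$ rather than all of $\GL_2(\cO_2)$, the standard Smith normal form is unavailable, and one must enumerate orbits of $\mathfrak{O}_2^\times \times \mathfrak{O}_2^\times$ on $M_2(\cO_2)$ by hand. The identity $\tau(h_1 g h_2) = h_2 \tau(g) h_1$ (using $\tau|_{\mathfrak{O}_2^\times} = \mathrm{id}$) converts the problem into showing that these orbits are invariant under the twisted flip $X \mapsto \omega X^T \omega^{-1}$. I expect this to reduce, via the $\mathfrak{O}_2$-bimodule structure of $M_2(\cO_2)$ (the scalar piece $\mathfrak{O}_2$ together with a complementary rank-one summand), to a small and fairly rigid check of discrete invariants; but carrying out this reduction cleanly at the congruence level is the step where the genuine work lies.
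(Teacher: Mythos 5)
Your proposal has two serious problems: a circularity and an unfinished key step.

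\textbf{Circularity.} You open by invoking Theorem \ref{main theorem} to replace $\pi_{N,\psi}$ with $\Pi = \Ind_{\mathfrak{O}_2^\times}^{\GL_2(\cO_2)}(\theta|_{\mathfrak{O}_2^\times})$. But the paper's proof of Theorem \ref{main theorem} is obtained by matching the explicit lists of irreducible constituents (with multiplicities) of $\pi_{N,\psi}$ and $\Pi$ given in Theorems \ref{Complete description of pi_N,psi} and \ref{Complete description of Pi}; the paper explicitly states in the Introduction that its proof of Theorem \ref{main theorem} \emph{uses} the multiplicity one result. The multiplicity-one statement for $\pi_{N,\psi}$ is thus established logically prior to, and is an essential ingredient of, Theorem \ref{main theorem}. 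Reducing Theorem \ref{multiplicity one} to Theorem \ref{main theorem} therefore reverses the intended implication and cannot stand unless you supply an independent proof of Theorem \ref{main theorem} that does not pass through multiplicity one.

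\textbf{The Gelfand-pair step is unproven.} Even granting the reduction, you would need to show that $(\GL_2(\cO_2), \mathfrak{O}_2^\times)$ satisfies the twisted Gelfand criterion: an anti-involution $\tau$ fixing $\mathfrak{O}_2^\times$ pointwise and preserving every $(\mathfrak{O}_2^\times, \mathfrak{O}_2^\times)$-double coset in $\GL_2(\cO_2)$. You correctly identify the candidate $\tau(g) = \omega g^{T} \omega^{-1}$, and the criterion is the right one for proving $\Ind_{\mathfrak{O}_2^\times}^{\GL_2(\cO_2)}(\chi)$ multiplicity free for \emph{all} characters $\chi$. But you concede in your last paragraph that you have not carried out the double-coset stability check over $\cO_2$; this is precisely the substance of the argument, and nothing in the proposal establishes it. The mod-$\varpi$ reduction only handles the cosets that survive in $\GL_2(\F_q)$ and says nothing about the fibres of the reduction, which you have left entirely open.

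\textbf{Comparison with the paper.} The paper's route is completely different and self-contained at this point: it decomposes $\pi_{N,\psi} = \bigoplus_{\delta \in \Omega_0} \pi^{\delta}_{N,\psi}$ by Mackey theory, proves that distinct summands share no irreducible constituent (Corollary \ref{disjointness of pi^delta}, which rests on Theorem \ref{Matrices conjugate iff cosets are same} and the restriction to $J^1_2$), and proves each individual summand is multiplicity free (Proposition \ref{multplicity 1 of sigma} via an explicit character computation for $\delta \neq I$, and Corollary \ref{Pi I N psi is multiplicity free} via the strong Gelfand property of $(\GL_2(\F_q), \F_{q^2}^\times)$ for $\delta = I$). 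That argument never touches $\Pi$ and hence avoids circularity. Your idea --- a direct Gelfand-pair statement for $(\GL_2(\cO_2), \mathfrak{O}_2^\times)$ --- would, if carried through independently, yield a stronger and more conceptual result (multiplicity one for $\Ind$ of \emph{every} character, not just strongly primitive $\theta$), but as written it is neither complete nor usable in the required logical position.
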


The above theorem can be considered as a variant of a theorem of Rallis for $p$-adic groups \cite[Theorem 1]{Prasad2001}.
The theorem of Rallis is indeed a Bessel model case of Gan-Gross-Prasad (GGP) conjecture \cite{GGP2012}.  
More precisely, it is the case of the GGP conjectures when one considers the restriction of representations of the group ${\rm SO}(6)$, which is closely related to $\GL_4$ to the subgroup ${\rm SO}(3)$ which is closely related to $\GL_2$.
The analogous multiplicity one theorem is also true in Prasad's case for $n=2$, which is a consequence of Theorem \ref{DP theorem}. 
Our proof of Theorem \ref{main theorem} uses the above multiplicity one result.

We now give an outline of the paper.
Our proofs depend on the construction of the regular representations of $\GL_{n}(\Ol)$ as given by Hill \cite{Hill1995} and Krakovski-Onn-Singla \cite{Singla2018} which we recall in Section \ref{Sec:2}. 
In particular, an  irreducible strongly cuspidal representation of $\GL_n(\cO_2)$ is induced from certain subgroup, say $T$. 
Our strategy here is to restrict the representation $\pi$ to the $P_{2,2}$ subgroup which we can describe by Mackey theory. 
We write an exhaustive set $\Omega$ of representatives for the double cosets in $T \backslash \GL_4(\cO_2)/P_{2,2}$ in Section \ref{Sec:3}.
We use these representatives for many calculations.
In fact, $\pi|_{P_{2,2}} \cong \underset{\delta}{\bigoplus}\pi^{\delta}$ where $\delta$ varies over a set of representatives of the double cosets mentioned above. 
Then, for every $\delta \in \Omega$ we describe the representation $\pi^{\delta}_{N, \psi}$ as a representation of $\GL_2(\cO_2)$. 
In Section \ref{Sec:3}, we compute the dimension of $\pi_{N, \psi}$ by computing the dimension of every $\pi^{\delta}_{N, \psi}$.
In Section \ref{Sec : 5}, we write down the character of $\pi^{\delta}_{N, \psi}$ for every $\delta$.
In Section \ref{Sec : 6}, we describe all the characters of $I+ \varpi M_{2}(\cO_2)$ appearing in $\pi^{\delta}_{N, \psi}$ when restricted to $I+ \varpi M_2(\cO_2)$.
There are three types of regular representations of $\GL_2(\cO_2)$ as mentioned in Table \ref{Table 1} which helps us to analyse their occurrence in $\pi^{\delta}_{N, \psi}$ and therefore in $\pi_{N, \psi}$ one by one.
In fact, we describe all the irreducible representations of $\GL_2(\cO_2)$ which appear in $\pi_{N, \psi}$ without appealing to the induced representation $\Pi := \Ind_{\mathfrak{O}_{2}^\times}^{\GL_2(\cO_2)}  ( \theta|_{\mathfrak{O}_{2}^\times} )$. 
In this section, we also prove the multiplicity one result mentioned above in Theorem \ref{multiplicity one}.
In Section \ref{Sec : 7}, we describe all the irreducible constituents of $\Pi$  and verify that all these constituents are the same as those which appear in $\pi_{N, \psi}$ with the same multiplicity (in fact, multiplicity one), which in turn proves Theorem \ref{main theorem}.

It should be noted that the proof of Prasad's theorem in \cite{Prasad2000} is indirect in the sense that there is no explicit isomorphism between $\pi_{N, \psi}$ and $\Ind_{\mathbb{F}_{q^n}^{\times}}^{\GL_{n}(\mathbb{F}_{q})} ( \theta |_{\mathbb{F}_{q^n}^{\times}} )$, instead it is proved that the characters of these two representations coincide.
We also prove Theorem \ref{main theorem} indirectly in the similar sense that we do not give any explicit isomorphism between $\pi_{N, \psi}$ and $\Pi$. 
The proof in \cite{Prasad2000} uses character theory, while our proof is a consequence of Mackey theory and Clifford theory since the construction of $\pi$ is totally explicit.

\section{Preliminaries }\label{Sec:2}

Let $F$ be a finite unramified extension of $\Q_p$ or $\F_{p}((t))$, and let $\mathfrak{o}$ be its ring of integers with a uniformizer $\varpi$.
For any positive integer $l$, let $\mathfrak{o}_l=\mathfrak{o}/(\varpi^l)$. 
Note that $\mathfrak{o}/(\varpi) \cong \mathbb{F}_q$ is a finite field of order $q=p^f$ of characteristic $p>0$. 
We assume $p \neq 2$.
For any integer $m$ with $1\leq m<l$, we have the natural quotient map of rings $\mathfrak{o}_{l} \rightarrow \mathfrak{o}_{m}$. 
This quotient map induces a surjective group homomorphism $\GL_{n}(\mathfrak{o}_l)\rightarrow \GL_{n}(\mathfrak{o}_m)$ for any positive integer $n$. 
For any $g \in \GL_{n}(\cO_l), h \in \GL_n(\cO_m)$ we write $\bar{g} \in\GL_{n}(\cO_m)$ for the image of $g$ under the natural map and $\tilde{h} \in \GL_n(\cO_l)$ for any element in the inverse image of $h$ under the natural map.
Define $K^m_l$ to be the kernel of the natural map $\GL_n(\mathfrak{o}_l)\rightarrow \GL_n(\mathfrak{o}_m)$ which is also called the $m$-th principal congruence subgroup of $\GL_n(\mathfrak{o}_l)$. 
Note that $K_{l}^{m}$ are normal subgroups and, for $m \geq l/2$ the subgroups $K_{l}^{m}$ are abelian.
Thus we have the following natural filtration of subgroups
$$
\{ I \} \subset K_{l}^{l-1} \subset \cdots \subset K_{l}^{m-1} \subset K_{l}^{m}\subset \cdots \subset K_{l}^{1} \subset \GL_{n}(\cO_l).
$$
For $1 \leq m <l$, we have $K^m_l/K^{m+1}_l\cong (M_n(\mathbb{F}_q),+)$.
Let $\widehat{M_{n}(\mathfrak{o}_l)}$ denote the Pontryagin dual group of the group $(M_{n}(\mathfrak{o}_l),+)$.
Fix a non-trivial additive character $\psi_0:\mathfrak{o}_{l}\rightarrow \mathbb{C}^\times$ such that $\psi_0|_{\varpi^{l-1}\mathfrak{o}_{l}}\neq 1.$  
For every $A \in M_{n}(\mathfrak{o}_l)$, define $\phi_{A} \in \widehat{M_{n}(\mathfrak{o}_l)}$ by $\phi_{A}(B) = \psi_0( \tr(AB))$, where $\tr$ denote the trace.
The map $A \mapsto \phi_{A}$ defines a group isomorphism between $M_{n}(\mathfrak{o}_l)$ and $\widehat{M_{n}(\mathfrak{o}_l)}$. 
Note that this isomorphism depends on the choice of the character $\psi_0$. 

For any $1\leq i<l$, $ \cO_{l-i}\cong \varpi^i\mathfrak{o}_l\subset \cO_l$ and there is a natural surjection $\cO_l\rightarrow\cO_{l-i}$ written as $x\mapsto\bar{x}$. 
 Given a character $\psi_0:\cO_l\rightarrow\C^\times$, its restriction to the subgroup $\varpi^{i} \cO_l$ can be seen as character of $\cO_{l-i}$ again denoted by $\psi_{0}$ satisfying the following property
 $\psi_0(\varpi ^{i}a)=\psi_0(\bar{a})$ for all $a\in \cO_{l}$.
 
\subsection{Regular representations of $\GL_n(\cO_l)$}

\begin{definition}
\begin{enumerate}[label = {(\alph*)}]
\item  Let $x \in M_{n}(\F_{q})$.
Then $x$ is called regular if its characteristic polynomial is the same as its minimal polynomial. 
Moreover, $x$ is called regular elliptic if its characteristic polynomial is irreducible.
\item For $x \in M_{n}(\F_q)$, the character $\phi_x : M_n(\mathbb{F}_q) \rightarrow \mathbb{C}^{\times}$ is called regular (respectively, regular elliptic) if $x$ is a regular (respectively, regular elliptic) element.
\item An irreducible representation of $\GL_{n}(\mathfrak{o}_l)$ is called regular (respectively, strongly cuspidal) if its restriction to $K^{l-1}_l \cong M_n(\mathbb{F}_q)$ contains a character which is regular (respectively, regular elliptic).
\end{enumerate}
\end{definition}

In the study of representations of $\GL_n(\Ol)$ the regular representations turn out to be basic objects. 
The regular representations of $\GL_n(\cO_l)$ can be constructed using Clifford theory, see G\'erardin \cite{Gerardin}, Hill \cite{Hill1995}, Krakovski-Onn-Singla \cite{Singla2018}, Stasinski \cite{Stasinski}, Stasinski-Stevens \cite{Stevens-Stasinski}. This construction is more intricate when $l$ is odd, see \cite{Singla2018}.
Now, we briefly describe the construction of regular representations of $\GL_n(\OO_l)$ for even $l$. 
\begin{theorem}\label{constrcution}
Consider the group $\GL_n(\cO_l)$ for an even integer $l > 1$ and set
$m = l/2$. 
Let $\rho : K^m_l \rightarrow \C^{\times}$ be a character such that $\rho|_{K^{l-1}_l}$ is a regular character and, let the inertia group of $\rho$ be $I(\rho) : = \left\lbrace g\in \GL_n(\mathfrak{o}_l):\rho(g^{-1}xg)=\rho(x)~\forall ~x\in K^m_l\right\rbrace$.
Then the following hold.
\\
(1) The character $\rho$ extends to  $I(\rho)$ and let $\tilde{\rho}$ be an extension of $\rho$ to $I(\rho)$. Then $\Ind_{I(\rho)}^{\GL_n(\mathfrak{o}_l)}\tilde{\rho}$ is an irreducible regular representation of $\GL_n(\mathfrak{o}_l)$.\\
(2) Suppose $\pi$ is a regular representation of $\GL_n(\mathfrak{o}_l)$ such that $\rho$ appears in $\pi|_{K^m_l}$. Then, there exists a character $\tilde{\rho}$ of $I(\rho)$ which extends $\rho$ such that $\pi=\Ind_{I(\rho)}^{\GL_n(\mathfrak{o}_l)}\tilde{\rho}.$
\end{theorem}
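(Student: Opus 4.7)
The plan is to apply Clifford theory with respect to the normal abelian subgroup $K^m_l \trianglelefteq G := \GL_n(\cO_l)$, where $m = l/2$. Under the isomorphism $K^m_l \cong (M_n(\cO_m), +)$ sending $I + \varpi^m X$ to $X$, and via the fixed character $\psi_0$, the character $\rho$ corresponds to a matrix $A \in M_n(\cO_m)$, and the hypothesis that $\rho|_{K^{l-1}_l}$ is regular is equivalent to the reduction $\bar A \in M_n(\F_q)$ being a regular matrix.

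The first step is to describe $I(\rho)$ explicitly. Since conjugation on $K^m_l$ factors through the reduction $G \to \GL_n(\cO_m)$, the group $I(\rho)$ is precisely the preimage of the centralizer $C_{\GL_n(\cO_m)}(A)$. Regularity of $\bar A$ gives $C_{\GL_n(\F_q)}(\bar A) = \F_q[\bar A]^\times$, and a standard Hensel-type lifting argument promotes this to $C_{\GL_n(\cO_m)}(A) = \cO_m[A]^\times$, an abelian group. In particular $I(\rho)/K^m_l$ is abelian.

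The technical crux is extending $\rho$ to a character of $I(\rho)$. I would exhibit the extension by lifting $A$ to some $\hat A \in M_n(\cO_l)$ and setting $T := \cO_l[\hat A]^\times$; this is an abelian subgroup of $G$ whose image in $\GL_n(\cO_m)$ is $\cO_m[A]^\times$, so $I(\rho) = T \cdot K^m_l$. Writing $I(\rho) \cong (T \times K^m_l)/\Delta$ with $\Delta = \{(t, t^{-1}) : t \in T \cap K^m_l\}$, a character of $I(\rho)$ extending $\rho$ corresponds to a character $\chi$ of $T$ with $\chi|_{T \cap K^m_l} = \rho|_{T \cap K^m_l}$. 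Since $T$ is a finite abelian group, any character of $T \cap K^m_l$ extends to $T$, producing the required $\tilde\rho$. This is precisely the point where Clifford theory could have yielded only a projective representation of $I(\rho)/K^m_l$, and the regularity of $\bar A$ is exactly what rules out a nontrivial Schur multiplier obstruction.

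For part (1), Mackey's irreducibility criterion for $\Ind_{I(\rho)}^G \tilde\rho$ is automatic: for $g \in G \setminus I(\rho)$ we have $\rho^g \neq \rho$ on $K^m_l \subset I(\rho) \cap g I(\rho) g^{-1}$, hence $\tilde\rho^g \neq \tilde\rho$ on this intersection; the induced representation is regular because its restriction to $K^{l-1}_l$ still contains $\rho|_{K^{l-1}_l}$. For part (2), standard Clifford theory says that every irreducible $\pi$ of $G$ containing $\rho$ in its restriction to $K^m_l$ has the form $\Ind_{I(\rho)}^G \sigma$ for some irreducible $\sigma$ of $I(\rho)$ lying over $\rho$; since $\rho$ already extends to a character, every such $\sigma$ is obtained by twisting a fixed extension by a character of the abelian group $I(\rho)/K^m_l$ and is therefore one-dimensional, giving the desired $\tilde\rho$ with $\pi = \Ind_{I(\rho)}^G \tilde\rho$. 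The principal obstacle throughout is the extension step, which is the only place where the regularity hypothesis is genuinely used.
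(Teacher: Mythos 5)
The paper does not prove this theorem; it is cited from G\'erardin, Hill, Krakovski--Onn--Singla, Stasinski, and Stasinski--Stevens, and the statement is given as a summary of known facts. Your proposal reconstructs the standard Clifford-theoretic proof from those references, and the overall architecture (identify $I(\rho)$ as the preimage of the centralizer, show the centralizer is $\cO_m[A]^\times$ by a Hensel/induction argument, find an abelian supplement $T=\cO_l[\hat A]^\times$, extend $\rho$, then apply Mackey and Clifford) is the right one.

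There is, however, a genuine error at the extension step. You assert $I(\rho) \cong (T\times K^m_l)/\Delta$ with $\Delta$ the antidiagonal of $T\cap K^m_l$; this identification requires $T$ and $K^m_l$ to commute elementwise, which they do not. Concretely, for $t\in T$ and $1+\varpi^m X\in K^m_l$ one has $t(1+\varpi^m X)t^{-1} = 1+\varpi^m tXt^{-1}$, and $\varpi^m(tXt^{-1}-X)=0$ in $M_n(\cO_l)$ only when $\bar t$ commutes with $\bar X$ in $M_n(\cO_m)$; since $X$ is arbitrary this would force $\bar t$ to be scalar. So $(t,k)\mapsto tk$ is not even a group homomorphism from the direct product $T\times K^m_l$ to $I(\rho)$, and the claimed isomorphism fails. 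Your conclusion still holds, but it needs a different justification: take any character $\chi$ of the finite abelian group $T$ with $\chi|_{T\cap K^m_l}=\rho|_{T\cap K^m_l}$ and set $\tilde\rho(tk)=\chi(t)\rho(k)$. Well-definedness is exactly the compatibility condition you state, and multiplicativity requires checking
$$\tilde\rho(t_1k_1t_2k_2)=\chi(t_1t_2)\,\rho(t_2^{-1}k_1t_2)\,\rho(k_2)=\chi(t_1)\rho(k_1)\chi(t_2)\rho(k_2),$$
which reduces to $\rho(t_2^{-1}k_1t_2)=\rho(k_1)$; this is precisely the fact that $T\subset I(\rho)$ stabilizes $\rho$. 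That $T$-invariance, not a direct-product decomposition, is the mechanism that makes the extension a homomorphism, and it is where the hypothesis enters. With this correction the rest of your argument (Mackey irreducibility, regularity of the induced representation, and the Clifford-theory description in part (2) using that $I(\rho)/K^m_l\cong T/(T\cap K^m_l)$ is abelian) is sound, modulo spelling out the Hensel-type induction proving $C_{M_n(\cO_m)}(A)=\cO_m[A]$, which you have correctly identified as the place regularity is used.
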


\begin{remark}\label{property of regular representation}
Let $l$ be even and $l=2m$. 
Then, $K^{m}_{l} \cong M_{n}(\cO_{m})$ and any character of $M_{n}(\cO_{m})$ is $\phi_{B}$ for some $B \in M_{n}(\cO_{m})$.
Let $\pi$ be an irreducible regular representation $\GL_{n}(\cO_l)$ such that $\phi_{B}$ appears in  $\pi|_{K^{m}_{l}}$. 
It can be seen by Clifford theory that $\phi_{B'}$ appears in $\pi|_{K^{m}_{l}}$ if and only if $B' = gBg^{-1}$ for some $g \in \GL_n(\cO_m)$.

\end{remark}
\subsection{Representations of $\GL_2(\cO_2)$}
We summarise all the irreducible representations of $\GL_2(\cO_2)$ below depending on their restriction to the first congruence subgroup $K^{1}_{2}$ which is isomorphic to $M_2(\F_q)$. 
Note that a regular matrix in $M_2(\F_q)$ is either non-split semisimple or split non-semisimple or split semisimple. 
It follows that the irreducible regular representation of $\GL_2(\cO_2)$ can be classified into three classes described below.
If an irreducible regular representation $\sigma$ is such that $\sigma|_{K^{1}_{2}}$ contains $\phi_{B}$ with $B$ non-split semisimple (respectively, split non-semisimple, split semisimple) then we say the type of $\sigma$ is non-split semisimple (respectively, split non-semisimple, split semisimple). 
In $M_2(\F_q)$, for a given trace, there is a unique class of scalar matrix, a unique class of split non-semisimple matrices, $\frac{q-1}{2}$ distinct classes of split semisimple matrices and $\frac{q-1}{2}$ distinct classes of non-split semisimple matrices.
\\
Fix a regular character $\phi_B$ of $K^1_2\cong M_2(\F_q)$ and a character $\omega$ of the center $Z$ of $\GL_2(\cO_2)$.
The character $\phi_B$ can be extended to the inertia group $I(\phi_B)$. 
Observe that $Z\cdot K^1_2\subset I(\phi_B)$.
The following table describes the number of irreducible regular representation of different types when its restriction to $Z \cdot K^{1}_{2}$ contains the character $\omega \cdot \phi_{B}$.

\begin{table}[hbt!]
\caption{}\label{Table 1}
\centering
\vspace{0.3 cm}
\begin{tabular}{|c|c|c|c|}
\hline 
S.No. & Type of $\sigma$ &  dimension of $\sigma$  &  No. of $  \sigma $ with $\omega\cdot\phi_B \subset \sigma|_{Z\cdot K^1_2}$ \\
\hline 
\hline 
1. &  non-split semisimple  & $q(q-1)$ & $q+1$ \\
2. & split non-semisimple& $q^2-1$  & $q$ \\
3. & split semisimple &  $q (q+1)$ & $q-1$ \\
\hline 
\end{tabular}
\end{table}
The irreducible representations of $\GL_2(\cO_2)$, which are not regular, are of the type $\sigma \otimes (\chi \circ \det)$, where $\sigma$ is an irreducible representation of $\GL_2(\F_{q})$ considered as a representation of $\GL_2(\cO_2)$ using the natural map $\GL_2(\cO_2) \rightarrow \GL_2(\F_q)$ and $\chi : \cO_{2}^{\times} \rightarrow \C^{\times}$ is a character.

\subsection{Strongly primitive characters}
Let $E$ be an unramified extension of $F$ of degree $n$. 
Let $\mathcal{O}$ be the integral closure of $\mathfrak{o}$ in $E$ with maximal ideal $\mathcal{P} =\varpi\mathcal{O}$. 
Let $\mathcal{O}_l =\mathcal{O}/\mathcal{P}^l$, then $\mathcal{O}_l$ is isomorphic to a free $\mathfrak{o}_l$-module of rank $n$. 
Therefore, $\GL_n(\mathfrak{o}_l)$ can be identified with the group of automorphisms of $\cO_l$-module $\mathcal{O}_l$. 
This identification is determined up to an inner automorphism of $\GL_n(\mathfrak{o}_l)$.
Clearly, for $a\in \mathcal{O}_l$, $x\mapsto ax$ is an $\mathfrak{o}_l$-module endomorphism of $\mathcal{O}_l$ and we get an embedding of the ring $\mathcal{O}_l$ to the ring of endomorphisms of $\mathfrak{o}_l$-module $\mathcal{O}_l$.
Therefore, $\mathcal{O}_l^\times$ becomes a subgroup of $\GL_n(\mathfrak{o}_l)$ under the same embedding.

\begin{definition}
\begin{enumerate}[label = {(\alph*)}]
\item
A character $\phi : \mathcal{O}_1\cong \F_{q^n}\rightarrow \mathbb{C}^\times$ is called primitive if there does not exist a proper subfield $\mathbb{F}_{q^d}$ of $\F_{q^n}$ and a character $\phi_0$ of $\F_{q^d}$ such that  $\phi(x)=\phi_0(\tr_{\F_{q^n}|\F_{q^d}}(x))$ for all $x\in \F_{q^n}$, where $\tr_{\F_{q^n}|\F_{q^d}}$ denote the trace map for the field extension $\F_{q^n}/\F_{q^d}$. 
\item 
For $l >1$, a character $\theta : \mathcal{O}_l^\times \rightarrow \mathbb{C}^\times$ is called strongly primitive if its restriction to the kernel of the map $\mathcal{O}_l^\times\rightarrow \mathcal{O}_{l-1}^\times$, which is isomorphic to $\mathcal{O}_{1}$, is a primitive character.
\end{enumerate}
\end{definition}
Strongly cuspidal representations of $\GL_n(\mathfrak{o}_l)$ are associated to strongly primitive characters of $\mathcal{O}_l^\times$, which we explain for even $l=2m$.
Let $x \in M_{n}(\F_q)$ be a regular elliptic element and $\tilde{x} \in M_{n}(\cO_{m})$ be such that it maps to $x$ under the natural quotient $M_{n}(\cO_m) \rightarrow M_{n}(\F_q)$.
Since $K^{m}_{l} \cong M_{n}(\cO_m)$, $\phi_{\tilde{x}}$ is a character of $K^{m}_{l}$. 
Then the inertia group $I(\phi_{\tilde{x}}) \cong \mathcal{O}_{l}^{\times} K^{m}_{l}$.
Let $\tilde{\phi}_{\tilde{x}}$ be an extension of $\phi_{\tilde{x}}$ to $I(\phi_{\tilde{x}})$. 
Write $\theta = \tilde{\phi}_{\tilde{x}}|_{\mathcal{O}_{l}^{\times}}$.
Then $\theta$ is a strongly primitive character of $\mathcal{O}_{l}^{\times}$ associated to the strongly cuspidal representation $\pi = \Ind_{I(\tilde{\phi}_{\tilde{x}})}^{\GL_{n}(\cO_l)} \tilde{\phi}_{\tilde{x}}$, see Aubert-Onn-Prasad \cite{AM2010}, Lusztig \cite{Lusztig2004}.
More precisely, we have the following result \cite[Theorem~B]{AM2010}.
\begin{theorem}
There is a canonical bijective correspondence between strongly cuspidal representations of $\GL_n(\mathfrak{o}_l)$ and $\Gal(E/F)$ orbits of strongly primitive characters of $\mathcal{O}_l^\times$.
\end{theorem}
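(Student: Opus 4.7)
The plan is to upgrade the explicit construction recalled just before the statement into a bijection by verifying well-definedness, surjectivity, and injectivity. I treat the even case $l=2m$ in detail; the odd case follows with the additional input of \cite{Singla2018}.

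For the map from strongly primitive characters to strongly cuspidal representations, let $\theta : \mathcal{O}_l^\times \to \C^\times$ be strongly primitive. Its restriction to $\ker(\mathcal{O}_l^\times \to \mathcal{O}_{l-1}^\times) \cong \F_{q^n}$ is primitive, so it equals $y \mapsto \psi_0(\tr_{\F_{q^n}/\F_q}(xy))$ for a regular element $x \in \F_{q^n}$ unique up to $\Gal(E/F)$. Lifting $x$ to $\tilde x \in \mathcal{O}_m$ and using $K_l^m \cong M_n(\cO_m)$, the characters $\phi_{\tilde x}$ and $\theta$ agree on $\mathcal{O}_l^\times \cap K_l^m$, so they glue to an extension $\tilde\phi_{\tilde x}$ of $\phi_{\tilde x}$ to $\mathcal{O}_l^\times K_l^m$, and Theorem \ref{constrcution} produces the strongly cuspidal representation $\pi_\theta := \Ind_{\mathcal{O}_l^\times K_l^m}^{\GL_n(\cO_l)} \tilde\phi_{\tilde x}$. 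A Galois conjugate of $\theta$ corresponds to a conjugate embedding $\mathcal{O}_l^\times \hookrightarrow \GL_n(\cO_l)$, hence changes $\pi_\theta$ only by an inner automorphism, so the assignment factors through $\Gal(E/F)$-orbits.

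For the inverse direction, let $\pi$ be strongly cuspidal. By definition $\pi|_{K_l^{l-1}}$ contains a regular elliptic $\phi_x$, which lifts to a character $\phi_{\tilde x}$ appearing in $\pi|_{K_l^m}$; Theorem \ref{constrcution}(2) gives $\pi = \Ind_{I(\phi_{\tilde x})}^{\GL_n(\cO_l)} \tilde\phi_{\tilde x}$ for some extension $\tilde\phi_{\tilde x}$. The key calculation is that the inertia group equals
\[
I(\phi_{\tilde x}) \;=\; \mathcal{O}_l^\times \cdot K_l^m.
\]
Granted this, $\theta_\pi := \tilde\phi_{\tilde x}|_{\mathcal{O}_l^\times}$ is strongly primitive since its restriction to $\ker(\mathcal{O}_l^\times \to \mathcal{O}_{l-1}^\times)$ is $\phi_x|_{\F_{q^n}}$. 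Remark \ref{property of regular representation} shows the $\phi_{\tilde x}$ occurring in $\pi|_{K_l^m}$ form a single $\GL_n(\cO_m)$-conjugacy class, equivalently a $\Gal(E/F)$-orbit of regular elliptic elements; Clifford theory then identifies the extensions $\tilde\phi_{\tilde x}$ of such a fixed $\phi_{\tilde x}$ to $I(\phi_{\tilde x})$ bijectively with characters $\theta$ of $\mathcal{O}_l^\times$ lifting $\phi_{\tilde x}|_{\mathcal{O}_l^\times \cap K_l^m}$, so $\pi \mapsto \theta_\pi$ inverts the first assignment on orbits.

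The main obstacle is the inertia computation $I(\phi_{\tilde x}) = \mathcal{O}_l^\times K_l^m$. I would prove it by induction along the central filtration $K_l^m \supset K_l^{m+1} \supset \cdots \supset K_l^{l-1}$, reducing at each stage to the statement that the $\GL_n(\cO_j)$-centralizer of the image of $\tilde x$ in $M_n(\cO_j)$ is precisely the image of $\mathcal{O}_j^\times$; this in turn follows from irreducibility of the minimal polynomial of $\tilde x$ over $\cO_j$ together with a standard Hensel-type lifting argument. This is the technical heart of Hill's construction of regular representations \cite{Hill1995} and of Aubert-Onn-Prasad \cite{AM2010}; once it is in hand, the bijection with $\Gal(E/F)$-orbits of strongly primitive characters follows formally.
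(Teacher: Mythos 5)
The paper does not prove this statement: it is quoted directly from Aubert--Onn--Prasad \cite[Theorem~B]{AM2010}, and the surrounding text of Section \ref{Sec:2} only sketches, for even $l$, the direction strongly primitive $\theta\Rightarrow$ strongly cuspidal $\pi$. There is therefore no in-paper argument to compare against; your attempt stands as a self-contained reconstruction, and should be judged as such.

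Your reconstruction has the right skeleton, but a few steps need tightening. The lift $\tilde x\in\mathcal{O}_m$ of $x$ cannot be arbitrary; it must be the unique lift making $\phi_{\tilde x}$ and $\theta$ agree on $\mathcal{O}_l^\times\cap K^m_l\cong 1+\varpi^m\mathcal{O}_l$, which exists and is unique by nondegeneracy of the trace form on $\mathcal{O}_m$. Also, agreement on the intersection alone does not yield a character of $\mathcal{O}_l^\times K^m_l$: you additionally need $\mathcal{O}_l^\times\subset I(\phi_{\tilde x})$, which holds because $\tilde x$ and $\mathcal{O}_l^\times$ both sit inside the commutative ring $\mathcal{O}_m$ and therefore commute under the embedding into $M_n(\cO_m)$. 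Your identification of the $\GL_n(\cO_m)$-conjugacy class of $\tilde x$ inside $\mathcal{O}_m$ with its $\Gal(E/F)$-orbit is correct but deserves a sentence (matching characteristic polynomials, then a Hensel lift of the residue-field case). The genuinely hard input, namely $I(\phi_{\tilde x})=\mathcal{O}_l^\times K^m_l$, which as you note reduces to the centralizer of $\tilde x$ in $\GL_n(\cO_m)$ being exactly $\mathcal{O}_m^\times$, you correctly isolate but defer to \cite{Hill1995} and \cite{AM2010}; given that the paper itself treats this theorem as a black box, that deferral is acceptable. Finally, you handle only even $l$ and wave at the odd case; that is consistent with the paper's own remark that the odd-$l$ construction is more intricate and requires \cite{Singla2018}, but a full proof would of course have to address it.
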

\begin{notation}
\begin{enumerate}[label = {(\alph*)}]
\item Let $L$ be a degree 2 unramified extension of $F$ and $\mathfrak{O}$ the integral closure of $\mathfrak{o}$ in $L$.
Let $E$ be a degree 4 unramified extension of $F$ containing $L$ with $\mathcal{O}$ the integral closure of $\mathfrak{o}$ in $E$.
Throughout the paper, we denote $\mathfrak{o}_2,~\mathfrak{O}_2$ and $\mathcal{O}_2$ for $\mathfrak{o}/(\varpi^2), ~\mathfrak{O}/ (\varpi^2)$ and $\mathcal{O}/(\varpi^2)$ respectively.
\item We denote the subgroup  $I+\varpi M_2(\mathfrak{o}_2)$ of $\GL_2(\mathfrak{o}_2)$ by $J^1_2$ and the subgroup $I+\varpi M_4(\mathfrak{o}_2)$ of $\GL_4(\cO_2)$ by $K^1_2$.
\end{enumerate}
\end{notation}

\section{Double cosets $T\backslash G/P$}\label{Sec:4}
For a representation $\pi$ of a finite group $H$ and an automorphism $\tau$ of $H$,  we define a representation $\pi^{\tau}$ given by $\pi^{\tau} := \pi \circ \tau$.
In particular, if $H \subset \tilde{H}$ is a subgroup and $x \in \tilde{H}$ normalises $H$, then we get an automorphism $\tau_{x}$ of $H$ given by $\tau_{x}(y)= x^{-1}yx$. In this case, we write $\pi^{x}$ instead of $\pi^{\tau_{x}}$. 

Throughout this section,  $G=\GL_4(\mathfrak{o}_2)$ and $P=P_{2,2} \subset G$.
We fix an irreducible strongly cuspidal representation $\pi = \Ind_{I(\phi_{x})}^{\GL_{4} (\cO_2)} (\tilde{\phi}_{x})$ where $x$ is a regular elliptic element of $M_{4}(\F_q)$.
We will choose an embedding $\F_{q^4}^{\times} \hookrightarrow \GL_4(\F_q)$ and take $x \in \F_{q^4}^{\times}$.
We will also fix an embedding of $\mathcal{O}_{2}^{\times} \hookrightarrow \GL_{4}(\cO_2)$ such that $\mathcal{O}_{2}^{\times}$ maps onto $\F_{q^4}^{\times}$ under the quotient  map $\GL_4(\cO_2) \rightarrow \GL_4(\F_{q})$.
Since $x$ is a regular elliptic element, we have $I(\phi_{x}) = \mathcal{O}_{2}^{\times}\cdot K^{1}_{2}$ and $I(\phi_{x}) / K^{1}_{2} \cong \F_{q^4}^{\times}$. 
For convenience, we write $T=I(\phi_x)$ and then $\pi = \Ind_T^G \tilde{\phi}_x$.
Therefore, $\pi_{N, \psi} = (\Res^{G}_{P}\pi)_{N, \psi} = (\Res_P^G \Ind_T^G \tilde{\phi}_x)_{N,\psi}$.
By Mackey theory 
\begin{center}
$\Res_{P}^{G} \Ind_{T}^{G} \tilde{\phi}_{x} \cong \underset{\delta \in T \backslash G/P} {\bigoplus} \Ind_{\delta^{-1}T \delta \cap P}^{P} \tilde{\phi}_{x}^{\delta^{-1}}$.
\end{center} 
For $\delta \in T \backslash G/P$, we will write $\pi^{\delta} = \Ind_{\delta^{-1}T \delta \cap P}^{P} \tilde{\phi}_{x}^{\delta^{-1}}$.
Now we describe the double cosets $T \backslash G/P$. 
We write $\bar{P}$ for the image of $P$ in $\GL_4(\mathbb{F}_q)$ and $\bar{T} = T/K^{1}_{2} \cong \F_{q^4}^{\times}$.
Since $K^{1}_{2}$ is the kernel of the quotient map $\GL_4(\cO_2) \rightarrow \GL_4(\F_q)$ and $K^{1}_{2} \subset T$, the quotient map induces a bijection between set of double cosets
$T\backslash \GL_4(\mathfrak{o}_2)/P$ and $\bar{T} \backslash \GL_4(\mathbb{F}_q)/\bar{P}$. 
\begin{lemma} \label{no of distinct double cosets}
The number of distinct cosets $\bar{T}\backslash \GL_4(\F_q)/\bar{P}$ is $q+1$.
\end{lemma}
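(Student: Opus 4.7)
\bigskip

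\noindent\textbf{Proof plan for Lemma \ref{no of distinct double cosets}.}

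The plan is to translate the double coset count into an orbit count on a Grassmannian. Since $\bar{P}$ is the stabilizer of the standard $\F_q$-subspace $\F_q^2 \oplus 0 \subset \F_q^4$, the quotient $\GL_4(\F_q)/\bar{P}$ is in natural bijection with the Grassmannian $\mathrm{Gr}(2,4)(\F_q)$ of $2$-dimensional $\F_q$-subspaces of $\F_q^4$. Hence I would first reduce the problem to counting the orbits of the action of $\bar{T} \cong \F_{q^4}^\times$ on $\mathrm{Gr}(2,4)(\F_q)$. Under our chosen embedding, $\F_q^4$ is identified with $\F_{q^4}$ as an $\F_q$-vector space and $\bar{T}$ acts by multiplication in $\F_{q^4}$.

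Next, I would determine the possible stabilizers. For $W \in \mathrm{Gr}(2,4)(\F_q)$ the set $R_W := \{\lambda \in \F_{q^4} : \lambda W \subseteq W\}$ is an $\F_q$-subalgebra of $\F_{q^4}$, hence a subfield, so $R_W \in \{\F_q, \F_{q^2}, \F_{q^4}\}$, and the stabilizer of $W$ in $\bar{T}$ is $R_W^\times$. The case $R_W = \F_{q^4}$ is impossible since $\F_{q^4}$ is a $1$-dimensional $\F_{q^4}$-vector space, so a proper nonzero $\F_{q^4}$-subspace does not exist. If $R_W = \F_{q^2}$, then $W$ is an $\F_{q^2}$-line in the $2$-dimensional $\F_{q^2}$-space $\F_{q^4}$; there are exactly $q^2+1$ such $W$, and $\bar{T}$ acts transitively on them, giving a single orbit of size $q^2+1$ with stabilizer of size $q^2-1$. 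If $R_W = \F_q$, then the stabilizer has size $q-1$ and the orbit has size $(q^4-1)/(q-1) = q^3+q^2+q+1$.

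Finally I would count. The total number of $2$-dimensional subspaces is the Gaussian binomial
\[
\binom{4}{2}_q = (q^2+1)(q^2+q+1).
\]
Subtracting the $q^2+1$ subspaces of ``$\F_{q^2}$-type'' leaves
\[
(q^2+1)(q^2+q+1) - (q^2+1) = q(q+1)(q^2+1),
\]
which divided by the orbit size $q^3+q^2+q+1 = (q+1)(q^2+1)$ gives exactly $q$ orbits of the second type. Adding the single orbit of the first type yields $q+1$ orbits, and hence $q+1$ double cosets $\bar{T}\backslash \GL_4(\F_q)/\bar{P}$, as required.

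The computations here are entirely routine; the only conceptual step is the identification of stabilizers with subfields of $\F_{q^4}$, and that is a one-line observation. The real work for the rest of the paper lies in choosing explicit double coset representatives $\delta \in \Omega$ that make the subsequent Mackey-theoretic analysis of $\pi^{\delta}_{N,\psi}$ tractable, rather than in the counting itself.
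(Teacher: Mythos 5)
Your proof is correct, and it reaches the answer by a route that is genuinely different from the paper's. You stay entirely in the Grassmannian picture: you observe that the stabilizer in $\bar{T}\cong\F_{q^4}^\times$ of a $2$-dimensional subspace $W$ is $R_W^\times$ where $R_W=\{\lambda\in\F_{q^4}:\lambda W\subseteq W\}$ is a finite $\F_q$-subalgebra of the field $\F_{q^4}$, hence a subfield, and you rule out $R_W=\F_{q^4}$ and then count orbits directly via the Gaussian binomial $\binom{4}{2}_q=(q^2+1)(q^2+q+1)$. The paper instead normalizes an orbit representative to $\Span\{1,y\}$ with $y\in\F_{q^4}\setminus\F_q$, identifies the orbit equivalence on such subspaces with the fractional-linear action of $\GL_2(\F_q)$ on $\F_{q^4}\setminus\F_q$, and then counts orbits of \emph{that} action by computing stabilizers in $\GL_2(\F_q)$. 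Both are short; your version has the virtue of avoiding the passage to the Möbius action and is perhaps the more transparent pure count, while the paper's normalization to $\Span\{1,y\}$ feeds directly into the explicit coset representatives $A_{u,v}$, $A_w$ used in Lemma \ref{double cosets u,v,w} and the subsequent Mackey computations, which is presumably why they set it up that way.
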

\begin{proof}
We know that $\GL_4(\mathbb{F}_q)/\bar{P} = 
$ Gr$(4,2)$ the Grassmannian, i.e. the set of all $2$-dimensional subspaces in a $4$-dimensional vector space over $\F_q$. 
For a group $G$ acting on a set $X$ we write, \def\acts{\curvearrowright} $G \acts X.$ Naturally, \def\acts{\curvearrowright}
$\GL_4(\mathbb{F}_q)\acts$ Gr$(4,2)$ transitively and distinct orbits of the restricted action to the subgroup $\bar{T}  \hookrightarrow \GL_4(\mathbb{F}_q)$ gives the distinct double cosets $\bar{T}\backslash \bar{G}/\bar{P}.$

Now, consider $\F_{q^4}$ as a 4-dimensional vector space over $\F_q$. 
A general element of Gr$(4,2)$ can be written as $\Span\{x,y\}$ the span of vectors $x, y \in \F_{q^4}$ which are linearly independent  over $\F_q$. 
Further, a representative of any orbit under the action \def\acts{\curvearrowright} $\F_{q^4}^\times\acts$ Gr$(4,2)$ can be taken to be $\Span\{1,y\}$ where $y\in \mathbb{F}_{q^4}\backslash\mathbb{F}_q$. 
It can be easily verified that $\Span\{1,y\}$ and  $\Span\{1,y'\}$ are in the same orbit if and only if
$y^\prime=\dfrac{py+q}{ry+s}$ for some $p,q,r,s\in \mathbb{F}_q$ where $\left\{ py+q,ry+s \right\}$ is a linearly independent set of vectors of $\mathbb{F}_{q^4}$, which is equivalent to say that $\exists \begin{pmatrix}
    p & q\\
    r & s
\end{pmatrix} \in \GL_2(\F_q)$ such that $y^\prime=\dfrac{py+q}{ry+s}$. 
 Now consider the action \def\acts{\curvearrowright} $\GL_2(\F_q) \acts \F_{q^4}\backslash\F_q$ given by \begin{equation}
\begin{pmatrix}
p & q\\
r & s
\end{pmatrix}\cdot y=\dfrac{py+q}{ry+s},\text{
~~~~where~}\begin{pmatrix}
         p & q\\
         r & s
\end{pmatrix}\in \GL_2(\F_q)\text{~and~} y \in \F_{q^4}\backslash\mathbb{F}_q. \label{Actions}
\end{equation}
Thus, it is clear that there is a bijection between the orbits in the action\def\acts{\curvearrowright} $\F_{q^4}^\times\acts$ Gr(2,4) and the action \def\acts{\curvearrowright} $\GL_2(\F_q) \acts \F_{q^4}\backslash\F_q$.
We use the action \def\acts{\curvearrowright} $\GL_2(\F_q) \acts \F_{q^4}\backslash\F_q$ to compute the number of distinct orbits.
\begin{itemize}
\item If $y\in \F_{q^2}\backslash\F_{q}$, the stabiliser of $\Span\{1,y\}$ is $\F_{q^2}^\times$ and therefore the orbit size of $\Span\{1,y\}$ is $q^2-q$.
\item If $y\in \F_{q^4}\backslash \F_{q^2}$, the stabiliser of $\Span\{1,y\}$ is $\F_{q}^\times$ and therefore the orbit size of $\Span\{1,y\}$ is  $q(q^2-1)$.
\end{itemize}
Thus, there is a single orbit with stabiliser $\F_{q^2}^\times$ and the number of orbits with stabiliser $\F_q^\times$ is $\dfrac{\left|\F_{q^4}\backslash\F_{q^2}\right|}{q(q^2-1)}=q$. 
Hence, the number of distinct double cosets is $q+1$.
\end{proof}

Now we write down an exhaustive, although not exclusive, set of representatives for these double cosets for which we need to fix an embedding of $\F_{q^4}^{\times}$ inside $\GL_4(\F_q)$. 
We fix an embedding of $\F_{q^4}$ in $M_4(\F_q)$ as described below.

Let $\alpha \in \F_{q}^{\times} \backslash \F_{q}^{\times 2}$ and then $\F_{q^2}=\F_{q}(\sqrt{\alpha})$. 
Let $a, b \in \F_{q}$ be such that $a+b\sqrt{\alpha} \in \F_{q^2}^{\times} \backslash \F_{q^2}^{\times 2}$ and 
\begin{align}\label{Norm is square free} \Norm_{\mathbb{F}_{q^2}|\F_q}(a+b\sqrt{\alpha}) = a^2 -b^2 \alpha  \text{~ is a square free element in~} \F_q,
\end{align} 
where $\Norm_{\mathbb{F}_{q^2}|\F_q}$ denotes the norm map for the field extension $\F_{q^2}/\F_{q}$. 
Write $\beta = \sqrt{a+b \sqrt{\alpha}}$. 
Then $\F_{q^4}=\F_{q}(\beta)$.
Note that $\F_{q}(\beta^2)=\F_{q^2}$. For $x \in \F_{q^4}$, the map $\F_{q^4} \rightarrow \F_{q^4}$ given by $y \mapsto xy$ is $\F_{q}$-linear and thus gives rise to an embedding $\F_{q^4} \hookrightarrow M_{4}(\F_{q})$.
Similarly, there is an embedding $\F_{q^2} \hookrightarrow M_{2}(\F_{q})$.
By fixing the ordered basis  $\{1,\beta^2,\beta,\beta^3\}$ of $\F_{q^4}$ as a $\F_{q}$-vector space, we write an explicit embedding which is given by 
\begin{equation}\label{Fq4 embedding}
x= a_0+a_1\beta^2+a_2\beta+a_3\beta^3 \mapsto \begin{pmatrix}
    X_1 & X_2\\
    X_3 & X_1
\end{pmatrix},     
\end{equation} 
where $X_1, X_2, X_3 \in M_2(\F_{q})$ are as follows:
\\
$X_1=\begin{pmatrix}
    a_0	& -a_1(a^2-b^2\alpha)\\
   a_1	& a_0+2aa_1 
\end{pmatrix},~X_2=\begin{pmatrix}
-a_3(a^2-b^2\alpha) & -(a_2+2aa_3)(a^2-b^2\alpha)\\
a_2+2aa_3	& a_3(3a^2+b^2\alpha)+2aa_2
\end{pmatrix}, X_3=\begin{pmatrix}
   a_2	& -a_3(a^2-b^2\alpha)\\
   a_3	& a_2+2aa_3
\end{pmatrix}$. 
Note that $X_1$ and $X_3$ are the images of $a_0 + a_1 \beta^2 $ and $a_2 + a_3 \beta^2$ under the fixed embedding $\F_{q^2}\hookrightarrow M_{2}(\F_{q})$ obtained by the ordered basis $\{1, \beta^2\}$ of $\F_{q^2}$ as $\F_{q}$-vector space. More explicitly,
\begin{equation}\label{F_q^2 embedding}
a_0+a_1\beta^2\mapsto \begin{pmatrix}
   a_0	& -a_1(a^2-b^2\alpha)\\
   a_1	& a_0+2aa_1\end{pmatrix}.
\end{equation}
From now onwards, we identify $\F_{q^4}^{\times}$ as a subgroup of $\GL_4(\F_q)$ and $\F_{q^2}^{\times}$ as a subgroup of $\GL_{2}(\F_{q})$ via the above mentioned embedding.

\begin{definition}
For $u,v,w \in \F_{q}$, define $A_{u,v} =
\begin{pmatrix}
1 & 0 & 0 & 0\\
0 & 1 & 0 & 0\\
0 & u & 1 & 0\\
0 & v & 0 & 1
\end{pmatrix}$ and 
$A_{w}=
\begin{pmatrix}
1 & 0 & 0 & 0\\
0 & 0 & 1 & 0\\
0 & 1 & 0 & 0\\
0 & w & 0 & 1
\end{pmatrix}$. 
\end{definition}

\begin{lemma} \label{double cosets u,v,w}
By taking the embedding $\F_{q^4}^{\times} \hookrightarrow \GL_4(\F_q)$ obtained from Equation (\ref{Fq4 embedding}), an exhaustive, but not exclusive, set of coset representatives of $\bar{T}\backslash \bar{G}/\bar{P}$ is given by
\begin{center}
$\left\lbrace A_{u,v}, A_{w} ~:~u,v,w \in \mathbb{F}_q \right\rbrace$.
\end{center}
\end{lemma}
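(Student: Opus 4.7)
The plan is to leverage Lemma \ref{no of distinct double cosets}: $|\bar{T}\backslash\bar{G}/\bar{P}| = q+1$, and the double cosets correspond bijectively to $\mathbb{F}_{q^4}^\times$-orbits on $\mathrm{Gr}(4,2)$ under the identification $\mathbb{F}_q^4 \cong \mathbb{F}_{q^4}$ induced by the ordered basis $\{1, \beta^2, \beta, \beta^3\}$. Since the statement only requires the list to be exhaustive (not exclusive), I only need to show that every such orbit is hit by at least one matrix from the given set.

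The first step is to identify the $2$-plane corresponding to each matrix. Reading off the first two columns and translating via the basis yields the correspondences
\begin{align*}
A_{u,v} \bar P &\longleftrightarrow \Span_{\mathbb{F}_q}\{1,\, \beta^2 + u\beta + v\beta^3\}, \\
A_w \bar P &\longleftrightarrow \Span_{\mathbb{F}_q}\{1,\, \beta + w\beta^3\}.
\end{align*}
In particular $A_{0,0} = I$ corresponds to $\mathbb{F}_{q^2}$, which represents the unique ``subfield'' orbit (the one of size $q^2+1$ identified in Lemma \ref{no of distinct double cosets}).

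For each of the remaining $q$ non-subfield orbits, I would use that it has a representative of the form $\Span\{1,y\}$ with $y \in \mathbb{F}_{q^4}\setminus\mathbb{F}_{q^2}$, and, from the proof of Lemma \ref{no of distinct double cosets}, two such subspaces $\Span\{1,y\}$ and $\Span\{1,y'\}$ lie in the same orbit iff $y' = (py+q)/(ry+s)$ for some $\begin{pmatrix} p & q \\ r & s\end{pmatrix} \in \GL_2(\mathbb{F}_q)$. Writing $y = a_0 + a_1\beta^2 + a_2\beta + a_3\beta^3$ and applying the translation $y \mapsto y - a_0$ followed by scaling, I would reduce to three cases: (i) if $a_1 \neq 0$, $y$ is equivalent to $\beta^2 + u\beta + v\beta^3$ and is covered by $A_{u,v}$; (ii) if $a_1 = 0$ and $a_2 \neq 0$, $y$ is equivalent to $\beta + w\beta^3$ and is covered by $A_w$; (iii) if $a_1 = a_2 = 0$, then $y$ is equivalent to $\beta^3$.

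The main obstacle is case (iii), since $\Span\{1,\beta^3\}$ does not appear directly in either family. To resolve it I would apply the inversion $y \mapsto 1/y$ and expand $1/\beta^3$ in the basis $\{1,\beta,\beta^2,\beta^3\}$ using the relation $\beta^4 = 2a\beta^2 + (b^2\alpha - a^2)$ derived from $\beta^2 = a + b\sqrt{\alpha}$. A short calculation gives $1/\beta^3 = y_1\beta + y_3\beta^3$ with $y_1 = (3a^2 + b^2\alpha)/(b^2\alpha - a^2)^2$ (and $y_1 = 1/(b^2\alpha)$ if $a=0$). The crucial point is that $y_1 = 0$ would force $3a^2 + b^2\alpha = 0$, whence $a^2 - b^2\alpha = 4a^2 = (2a)^2$ would be a square, contradicting the square-free hypothesis \eqref{Norm is square free}. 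Hence $y_1 \neq 0$, and a further rescaling puts $1/\beta^3$ in the form $\beta + (y_3/y_1)\beta^3$, matching $A_{y_3/y_1}$. This completes the exhaustive property.
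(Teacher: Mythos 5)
Your proposal is correct and follows essentially the same route as the paper: reduce to $\bar T$-orbits on $\mathrm{Gr}(4,2)$ via the representative $\Span\{1,y\}$ with $y=a_0+a_1\beta^2+a_2\beta+a_3\beta^3$, case on the vanishing of $a_1$ and $a_2$, and resolve the remaining orbit $\Span\{1,\beta^3\}$ by multiplying by $\beta^{-3}$ and using the square-free hypothesis (\ref{Norm is square free}) to rule out $3a^2+b^2\alpha=0$. The only minor difference is stylistic: you compute the expansion of $\beta^{-3}$ explicitly (finding $b_1=0$ and the $\beta$-coefficient nonzero), whereas the paper argues by contradiction from the assumption $b_1=b_2=0$; both reduce the last case to the earlier ones by the same observation.
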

\begin{proof}
Recall that $\GL_4(\mathbb{F}_q)/\bar{P}\cong $ Gr$(4,2)$ where $\bar{P}$ is the stabiliser of the subspace $W_0=\Span\{ 1,\beta^2 \}\cong \F_{q^2}$, therefore transitivity of the action \def\acts{\curvearrowright}
$\GL_4(\mathbb{F}_q)\acts$ Gr$(4,2)$ 
implies given any $W\in$ Gr$(4,2)$, we get an element $g\in \GL_4(\mathbb{F}_q)$ such that $g\cdot W_0=W.$ 
Note that a representative of an orbit for the action  \def\acts{\curvearrowright} $\F_{q^4}^\times\acts$ Gr$(4,2)$ can be taken to be $W=\Span\{1, y\}$ for $y \in \F_{q^4} \backslash \F_{q}$. 
Write $y = b_0+b_1\beta^2+b_2\beta+b_3\beta^3$  for some $b_0, b_1, b_2, b_3 \in \F_{q}$ then,
\begin{center}
     $W  = \Span\{ 1,b_1\beta^2+b_2\beta+b_3\beta^3 \}$.
\end{center}
\begin{enumerate}
\item[Case 1:] If $b_1 \neq 0$, we can take $b_1=1$ and  get $W=\Span\{1,\beta^2+b_2\beta+b_3\beta^3\}$ 
 where $b_2,b_3\in \mathbb{F}_q$. 
 We define a linear map on $\F_{q^4}$ which maps $W_0$ to $W$ in such a way that $1 \mapsto 1, \beta^2 \mapsto \beta^2+ b_2 \beta +b_3 \beta^3, \beta \mapsto \beta, \beta^3 \mapsto \beta^3$.
 Then a representative for the double coset corresponding to $W$ can be taken to be $A_{b_2, b_3}$.
\item[Case 2:] If $b_1=0, b_2 \neq 0$ we can take $b_2=1$ and get $W=\Span\{1,\beta+b_3\beta^3\}$ where $b_3\in \mathbb{F}_q$. Similar to the Case 1, a representative for the double coset corresponding to $W$ can be taken to be $A_{b_3}$.
\item[Case 3:] If $b_1=b_2=0, b_3\neq 0$ we can take $b_3=1$ and get $W=\Span\{1,\beta^3\}$.
For this subspace, we claim that its representative can be taken to be either $A_{u,v}$  for some $u,v \in \F_q$ or $A_w$ for some $w \in \F_q$.
Note that $\beta^{-3}$ action takes $\Span\{ 1,\beta^{3}\}$ to $\Span\{1,\beta^{-3}\}$.
Write $\beta^{-3}=b_0+b_1\beta^2+b_2\beta+b_3\beta^3$ for some $b_0,b_1,b_2,b_3$.
We claim that either $b_1 \neq 0$ or $b_2 \neq 0$ and then the proof follows from the earlier two cases. 
If $b_1=b_2=0$, then, $\beta^{-3} = b_0+b_3\beta^3$ which gives $1=b_0\beta^3+b_3\beta^6
= b_0\beta^3-(2ab_3)(a^2-b^2\alpha)+(3a^2+b^2\alpha)b_3\beta^2$. 
Comparing the coefficients both sides, we get, $b_0=0$ and $-2ab_3(a^2-b^2\alpha)=1$ and $(3a^2+b^2\alpha)b_3=0$ which implies $3a^2+b^2\alpha=0\implies \Norm_{\F_{q^2}|\F_{q}}(a+b\sqrt{\alpha})=4a^2$, which contradicts our assumption (\ref{Norm is square free}).\qedhere
\end{enumerate}
\end{proof}

\begin{lemma} \label{A_w and A_uv are the same}
For every $w \in \F_{q}$ there exists $u,v \in \F_{q}$ such that $\bar{T} A_{w} \bar{P} = \bar{T} A_{u,v} \bar{P}$.    
\end{lemma}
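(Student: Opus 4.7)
The plan is to transfer the problem to the Grassmannian side of the bijection from Lemma~\ref{no of distinct double cosets}: $\F_{q^4}^\times$-orbits on $\mathrm{Gr}(4,2)$ are in natural bijection with $\bar T \backslash \bar G/\bar P$, via $g\bar P \mapsto g \cdot W_0$. Unwinding the proof of Lemma~\ref{double cosets u,v,w} one reads off that $A_w\cdot W_0 = \Span\{1,\beta+w\beta^3\}$ and $A_{u,v}\cdot W_0 = \Span\{1,\beta^2+u\beta+v\beta^3\}$ (where $W_0 = \Span\{1,\beta^2\}$). Hence the lemma reduces to producing, for each $w\in\F_q$, a scalar $\lambda\in\F_{q^4}^\times$ and $u,v\in\F_q$ satisfying
\[
\lambda\cdot\Span\{1,\beta+w\beta^3\} \;=\; \Span\{1,\beta^2+u\beta+v\beta^3\}.
\]

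\textbf{Construction of $\lambda$.} First I would arrange that $1$ lies in the translated subspace. Observe that $\beta+w\beta^3 = (1+w\beta^2)\beta$ lies in $\F_{q^2}\cdot\beta\setminus\{0\}$, so any combination $r+s(\beta+w\beta^3)$ with $(r,s)\in\F_q^2\setminus\{(0,0)\}$ is nonzero in $\F_{q^4}$. Take
\[
\lambda \;=\; \bigl(r + s\beta + sw\beta^3\bigr)^{-1}, \qquad r,s\in\F_q^\times\ \text{to be chosen.}
\]
Since $r+s(\beta+w\beta^3)\in W_1:=\Span\{1,\beta+w\beta^3\}$, the identity $\lambda\cdot(r+s(\beta+w\beta^3)) = 1$ shows $1\in\lambda W_1$, and so $\lambda W_1 = \Span\{1,\lambda\}$.

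\textbf{The key coefficient.} Expand $\lambda = d_0 + d_1\beta^2 + d_2\beta + d_3\beta^3$ in the $\F_q$-basis $\{1,\beta^2,\beta,\beta^3\}$ fixed in~(\ref{Fq4 embedding}). If $d_1\neq 0$, then setting $u=d_2/d_1,\ v=d_3/d_1$ gives $(\lambda - d_0)/d_1 = \beta^2 + u\beta + v\beta^3$, whence $\lambda W_1 = \Span\{1,\beta^2+u\beta+v\beta^3\} = A_{u,v}\cdot W_0$, and we are done. Computing $\lambda$ amounts to inverting the matrix $M$ of multiplication by $r+s\beta+sw\beta^3$ on $\F_{q^4}$, whose determinant equals $\Norm_{\F_{q^4}/\F_q}(r+s\beta+sw\beta^3)\neq 0$. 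Using the relations $\beta^4 = -c + 2a\beta^2$, $\beta^5 = -c\beta + 2a\beta^3$, $\beta^6 = -2ac + (3a^2+b^2\alpha)\beta^2$ with $c=a^2-b^2\alpha$, Cramer's rule applied to the $\beta^2$-column of $M$ gives, after a direct cofactor expansion,
\[
d_1 \;=\; \frac{rs^2\bigl(1 + 4aw + (3a^2 + b^2\alpha)w^2\bigr)}{\Norm_{\F_{q^4}/\F_q}(r+s\beta+sw\beta^3)}.
\]

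\textbf{Non-vanishing and the main point.} The quadratic $f(w) := 1 + 4aw + (3a^2+b^2\alpha)w^2\in\F_q[w]$ has discriminant $(4a)^2 - 4(3a^2+b^2\alpha) = 4(a^2-b^2\alpha) = 4c$. By hypothesis~(\ref{Norm is square free}) the element $c$ is a non-square in $\F_q^\times$, and since $p\neq 2$ the constant $4$ is a square; hence $4c$ is a non-square in $\F_q^\times$, so $f$ has no root in $\F_q$, i.e.\ $f(w)\neq 0$ for every $w\in\F_q$. Choosing $r=s=1$ then yields $d_1\neq 0$, completing the proof. The only delicate step is the cofactor computation producing the explicit formula for $d_1$; the crux of the argument is the perfect alignment between the discriminant of $f$ and the non-square $c = a^2 - b^2\alpha$ singled out in~(\ref{Norm is square free}) — the same hypothesis that was decisive in Case~3 of Lemma~\ref{double cosets u,v,w}.
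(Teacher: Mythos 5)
Your proposal is correct, and it takes a genuinely different route from the paper. The paper starts from the (stated without derivation) explicit polynomial criterion in $u,v,w$ for the equality $\bar T A_w\bar P=\bar T A_{u,v}\bar P$, treats it as a quadratic in $u$, and then argues by pigeonhole: the discriminant $D(v)$ is a non-degenerate quadratic in $v$, hence $\{D(v):v\in\F_q\}$ has $\tfrac{q+1}{2}$ elements and must contain a square, so a solution $(u,v)$ exists. Your argument instead works entirely on the Grassmannian. You exhibit an explicit $\lambda=\mu^{-1}\in\F_{q^4}^\times$ with $\mu=1+\beta+w\beta^3$, note $1\in\lambda W_1$ so $\lambda W_1=\Span\{1,\lambda\}$, and reduce the problem to showing the $\beta^2$-coordinate $d_1$ of $\lambda$ is nonzero. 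Your formula $d_1=rs^2 f(w)/\Norm_{\F_{q^4}/\F_q}(\mu)$ with $f(w)=1+4aw+(3a^2+b^2\alpha)w^2$ is correct (this follows cleanly from $\mu^{-1}=\mu'/\Norm_{\F_{q^4}/\F_{q^2}}(\mu)$ followed by rationalization over $\F_q$, avoiding the $4\times 4$ cofactor expansion), and the non-vanishing of $f$ for all $w\in\F_q$ follows from the discriminant $4(a^2-b^2\alpha)$ being a non-square by hypothesis (\ref{Norm is square free}). Both proofs hinge on (\ref{Norm is square free}), but differently: the paper uses it to make the $v^2$-coefficient of $D(v)$ nonzero so that pigeonhole applies, while you use it to show $f$ has no $\F_q$-roots, yielding a uniform explicit choice $(r,s)=(1,1)$ independent of $w$. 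Your route is more constructive and arguably cleaner; the only small gap is that the discriminant argument presupposes $\deg f=2$, i.e.\ $3a^2+b^2\alpha\neq 0$, which you should note follows from (\ref{Norm is square free}) (if $3a^2+b^2\alpha=0$ then $a^2-b^2\alpha=4a^2$, a square) — the same observation the paper makes in Case~3 of the previous lemma.
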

\begin{proof}
It can be verified that $\bar{T} A_{w} \bar{P} = \bar{T} A_{u,v} \bar{P}$ if and only if 
\begin{align}\label{Double coset u,v and w equation}
& u^2[w(1+2aw)]-uv[1-w^2(3a^2+b^2\alpha)]-v^2[2a+w(3a^2+b^2\alpha)]
\nonumber \\
& +(1+2aw+(a^2-b^2\alpha)w^2)=0.
\end{align}
Therefore, it is enough to prove that for any $w \in \F_{q}$, there always exists  $u,v \in \F_{q}$ such that Equation (\ref{Double coset u,v and w equation}) holds. 
For a given $w \in \F_{q}$, we consider the above equation as a quadratic equation in $u$. 
If the coefficient of $u^2$ is 0, i.e. $w(1+2aw)=0$, then the Equation (\ref{Double coset u,v and w equation}) is linear and one can easily verify that there exists $u, v \in \F_{q}$ satisfying the equation.
Now we assume that the coefficient of $u^2$ is not zero. 
Then Equation (\ref{Double coset u,v and w equation}) has a solution $u \in \F_{q}$ if and only if the discriminant is a square element in $\F_q$. 
The discriminant $D(v)$ is given by 
\begin{center}
    $D(v) = v^2\left((1+2aw)^2 -(a^2-b^2\alpha)w^2\right) -4w(1+2aw)\left((1+aw)^2-(bw)^2\alpha\right)$.
\end{center} 
Clearly, the coefficient of $v^2$ is not zero, otherwise $a^2 - b^2 \alpha \in \F_{q}^{\times 2}$. 
Then, $$\left|\left\lbrace D(v) : v \in \F_{q} \right\rbrace \right| = \frac{q+1}{2}.$$
Since the number of non-square elements in $\F_q$ is $\frac{q-1}{2}$, there exists $v \in \F_{q}$ such that $D(v) \in \F_{q}^{\times 2}$. Hence we are through.
\end{proof}
By using Lemma \ref{double cosets u,v,w} and Lemma \ref{A_w and A_uv are the same} we conclude the following.
\begin{proposition}
The set $\Omega := \{ A_{u,v} : u,v \in \F_{q} \}$ is an exhaustive, but not exclusive, set of representatives for $\bar{T} \backslash \bar{G} / \bar{P}$. 
\end{proposition}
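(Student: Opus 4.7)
The plan is to obtain this proposition as an immediate consequence of the two preceding lemmas, combining them in the natural way. Lemma \ref{double cosets u,v,w} already guarantees that the larger collection $\{A_{u,v}, A_{w} : u,v,w \in \F_q\}$ is an exhaustive set of representatives for $\bar T \backslash \bar G / \bar P$. Lemma \ref{A_w and A_uv are the same} then shows that each representative of the form $A_w$ lies in the same double coset as some $A_{u,v}$; hence the subset $\{A_w : w \in \F_q\}$ is absorbed into $\{A_{u,v} : u,v \in \F_q\}$ at the level of double cosets, and dropping the $A_w$'s from the exhaustive list preserves exhaustiveness. This yields precisely $\Omega$.

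The non-exclusivity claim is then a matter of comparing cardinalities: by Lemma \ref{no of distinct double cosets}, the total number of double cosets in $\bar T \backslash \bar G / \bar P$ is $q+1$, whereas $|\Omega| = q^2$. For any $q \geq 2$ we have $q^2 > q+1$, which forces $\bar T A_{u,v} \bar P = \bar T A_{u',v'} \bar P$ for many pairs $(u,v) \neq (u',v')$, so $\Omega$ cannot be a set of distinct representatives. One could, if desired, make this explicit by reading off coincidence relations from the same discriminant analysis used in Lemma \ref{A_w and A_uv are the same}, but this is unnecessary for the statement.

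There is no genuine obstacle at this step: all the substantive work, in particular the classification of orbits of $\F_{q^4}^\times$ on Gr$(4,2)$ via the M\"obius action in \eqref{Actions} and the quadratic/discriminant argument verifying that the family $\{A_w\}$ is redundant, has already been carried out in Lemmas \ref{no of distinct double cosets}, \ref{double cosets u,v,w} and \ref{A_w and A_uv are the same}. The proposition is therefore a short corollary, and its role in what follows is simply to fix a concrete, uniformly parametrized set $\Omega$ on which the Mackey decomposition
\[
\Res_P^G \Ind_T^G \tilde\phi_x \;\cong\; \bigoplus_{\delta \in \Omega'} \Ind_{\delta^{-1} T \delta \cap P}^P \tilde\phi_x^{\delta^{-1}}
\]
(with $\Omega' \subseteq \Omega$ a set of distinct representatives) can be computed term by term in the subsequent sections.
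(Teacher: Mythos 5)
Your proof is correct and takes essentially the same route as the paper, which states the proposition immediately after Lemmas \ref{double cosets u,v,w} and \ref{A_w and A_uv are the same} with the single line ``By using Lemma \ref{double cosets u,v,w} and Lemma \ref{A_w and A_uv are the same} we conclude the following.'' Your added cardinality remark ($|\Omega| = q^2 > q+1$) justifying non-exclusivity is a harmless elaboration the paper leaves implicit.
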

We state the following lemma without proof which provides an equivalent condition for two elements from $\Omega$ representing the same double coset for $\bar{T}\backslash \bar{G}/\bar{P}$.
\begin{lemma}\label{Double cosets}
For $\delta = A_{u,v}, \delta'=A_{u', v'} \in \Omega$, let 
\begin{align*}
C(\delta, \delta') =~ &  u^2-{u^\prime}^2 +uv{u^\prime}^2-u^2u^\prime v^\prime + 2a(v^2{u^\prime}^2-u^2{v^\prime}^2) + 2a(uv-u^\prime v^\prime) \\
& +(3a^2+b^2\alpha)(u^\prime v^\prime v^2-uv{v^\prime}^2)+(v^2-{v^\prime}^2)(a^2-b^2\alpha).
\end{align*}
Then,
$T \delta P = T \delta' P$ if and only if 
 $C(\delta, \delta')=0$.
 In particular,  $T \delta P = TIP$  if and only if $\delta =I$, where $I$ is the $4\times 4$ identity matrix.
\end{lemma}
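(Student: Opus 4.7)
The plan is to transport the equality of double cosets to a condition on the Grassmannian $\mathrm{Gr}(4,2)$ and then read off the algebraic identity. Since $K^1_2 \subset T$, the bijection $T\backslash G/P \leftrightarrow \bar T\backslash \bar G/\bar P$ used in Lemma \ref{no of distinct double cosets} reduces the question to the $\bar T = \F_{q^4}^\times$-action on $\mathrm{Gr}(4,2)$. With the basepoint $W_0 = \Span\{1,\beta^2\}$ stabilised by $\bar P$, reading off the columns of $A_{u,v}$ gives $\delta \cdot W_0 = \Span\{1,y\}$ where
\[
y = \beta^2 + u\beta + v\beta^3 \in \F_{q^4}\setminus \F_q,
\]
and similarly $\delta' \cdot W_0 = \Span\{1,y'\}$ with $y' = \beta^2 + u'\beta + v'\beta^3$. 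By the bijection established in the proof of Lemma \ref{no of distinct double cosets} between $\F_{q^4}^\times$-orbits on $\mathrm{Gr}(4,2)$ and $\GL_2(\F_q)$-orbits on $\F_{q^4}\setminus \F_q$ via (\ref{Actions}), the equality $T\delta P = T\delta' P$ is equivalent to the existence of a matrix $\begin{pmatrix} p & q \\ r & s\end{pmatrix} \in \GL_2(\F_q)$ satisfying $(ry+s)y' = py+q$.

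The next step is to expand this identity in the ordered $\F_q$-basis $\{1,\beta^2,\beta,\beta^3\}$, using the minimal polynomial $X^2 - 2aX + (a^2-b^2\alpha)$ of $\beta^2$ over $\F_q$ to reduce higher powers of $\beta$:
\[
\beta^4 = 2a\beta^2 - (a^2-b^2\alpha),\quad \beta^5 = 2a\beta^3 - (a^2-b^2\alpha)\beta, \quad \beta^6 = (3a^2+b^2\alpha)\beta^2 - 2a(a^2-b^2\alpha).
\]
Matching the coefficients of $1$ and of $\beta^2$ in $ryy' + sy' - py - q = 0$ determines $q$ and $p$ uniquely in terms of $(r,s)$; substituting these values into the equations obtained from the $\beta$- and $\beta^3$-coefficients leaves a homogeneous $2\times 2$ linear system in $(r,s)$, whose coefficient determinant collapses, after simplification, to $C(\delta,\delta')$ up to sign. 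Thus a nontrivial $(r,s)$ exists if and only if $C(\delta,\delta')=0$.

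Two small points then finish the proof. First, any nontrivial solution $(r,s)$ gives an \emph{invertible} matrix $M$, because $ps = qr$ would make $py+q$ a scalar multiple of $ry+s$ and hence force $y' \in \F_q$, contradicting $y' \in \F_{q^4}\setminus \F_q$. Second, for the special case $\delta' = I$ (i.e.\ $u'=v'=0$), direct substitution yields $C(\delta,I) = u^2 + 2auv + (a^2-b^2\alpha)v^2$, which factors over $\F_{q^2}$ as $(u+(a+b\sqrt{\alpha})v)(u+(a-b\sqrt{\alpha})v)$. Since $b\neq 0$ (forced by $a+b\sqrt{\alpha} \notin \F_{q^2}^{\times 2}$), the factors $a\pm b\sqrt\alpha$ lie outside $\F_q$, so the only $\F_q$-solution is $u=v=0$, i.e.\ $\delta = I$.

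The main obstacle is purely bookkeeping: the expansion of $yy'$ and the subsequent elimination involve many cross-terms, and one must track the asymmetric pieces such as $uvu'^2 - u^2u'v'$ and $(3a^2+b^2\alpha)(u'v'v^2 - uvv'^2)$ carefully so that they line up exactly with the stated form of $C(\delta,\delta')$. No conceptual difficulty arises beyond this algebraic verification, which runs in parallel with the computation already carried out in the proof of Lemma \ref{A_w and A_uv are the same}.
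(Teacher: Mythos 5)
The paper states this lemma \emph{without proof} (``We state the following lemma without proof...''), so there is no paper argument against which to check details. Your proof supplies the missing argument, and it does so in the way the surrounding material strongly suggests: the reduction to the Möbius action of $\GL_2(\F_q)$ on $\F_{q^4}\setminus\F_q$ is exactly the mechanism set up in the proof of Lemma \ref{no of distinct double cosets}, and identifying $A_{u,v}\cdot W_0 = \Span\{1,\beta^2+u\beta+v\beta^3\}$ is immediate from reading the second column of $A_{u,v}$ in the ordered basis $\{1,\beta^2,\beta,\beta^3\}$.

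I checked the elimination. Writing $\alpha_0 = a^2-b^2\alpha$ and $P_0 = vv'(3a^2+b^2\alpha)+2a(1+uv'+u'v)+uu'$, the $1$- and $\beta^2$-components of $ryy'+sy'-py-q=0$ determine $q$ and $p=s+rP_0$; substituting into the $\beta$- and $\beta^3$-components yields the homogeneous system
\[
s(u'-u) - r\bigl[\alpha_0(v+v')+P_0 u\bigr]=0,\qquad
s(v'-v) + r\bigl[2a(v+v')+(u+u')-P_0 v\bigr]=0,
\]
whose determinant expands to
\[
(u^2-u'^2)-\alpha_0\bigl((v')^2-v^2\bigr)-2a(v+v')(u'-u)+P_0(vu'-uv'),
\]
and regrouping the $P_0$ terms gives precisely $C(\delta,\delta')$ (not merely up to sign, though the sign is immaterial for the nullity criterion). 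Your invertibility argument is sound: if $(r,s)\neq(0,0)$ and $(p,q)=\lambda(r,s)$, then $y'=\lambda\in\F_q$, a contradiction; and $ry+s\neq0$ because $\{1,y\}$ is $\F_q$-linearly independent. The specialization $C(\delta,I)=u^2+2auv+(a^2-b^2\alpha)v^2$, its factorization over $\F_{q^2}$, and the deduction $b\neq0$ (which also follows directly from (\ref{Norm is square free}), since $b=0$ would make $a^2-b^2\alpha=a^2$ a square) are all correct. Your proof is a valid completion of what the paper leaves to the reader.
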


Here is a list of notations which we will use throughout this paper.
\begin{notation}\label{Notations for matrices}
\begin{enumerate}[label = {(\alph*)}]
\item We denote the identity matrix of a suitable order by $I$, which will be clear from the context.
\item Let $\Omega_0\subset \Omega$ be such that every double coset in $\bar{T}\backslash \bar{G}/\bar{P}$ has a representative in $\Omega_0$ and distinct elements in $\Omega_0$ represent distinct double cosets in $\bar{T}\backslash \bar{G}/\bar{P}$.
\item Since the quotient map $G \rightarrow \bar{G}$ induces a bijection between $T \backslash G /P$ and $\bar{T} \backslash \bar{G} /\bar{P}$, for  a representative $\delta\in \bar{G}$ of a double coset in $\bar{T}\backslash \bar{G}/\bar{P}$ we write its lift to $G$ by the same letter $\delta$ for the corresponding representative of the double coset in $T\backslash G/P$.
\item
For $\delta=A_{u,v} =\begin{pmatrix}
    I & 0\\
    U & I
\end{pmatrix}$ where $U=\begin{pmatrix}
    0 & u\\
    0 & v
\end{pmatrix}$, write 
\\
$L_{\delta}=X_1U-UX_1+X_3-UX_2U$. In particular, $L_{I}=X_3$.
\label{Notation 3.7}
\item If $L_{\delta}$ is invertible, then write
$\mathcal{B}_{\delta}=L_{\delta}(X_1+X_2U)L_{\delta}^{-1}+X_1-UX_2$.
\label{Notation 3.8}
In particular, $\mathcal{B}_{I}=2X_1$ and $\tr(\mathcal{B}_{\delta})= \tr(2X_1)$. \label{Notation L_I AND B_I}
\item Write $X=a_1^2-a_2a_3-2aa_3^2$ and $Y=a_2^2+a_3^2(a^2-b^2\alpha)+2aa_2a_3$.
\end{enumerate}
\end{notation}

Observe that $Y\neq 0$ otherwise $a_2=a_3=0$ and then $x$ will not be a regular elliptic element of $M_4(\F_q)$. 
It can be checked that 
\begin{align}\label{determinant for Lu,v}
\det(L_{\delta})   = -u^2X-uv\left(2aX+Y\right)-v^2\left(2aY+X(a^2-b^2\alpha)\right)+Y.
\end{align}

\section{The dimension of $\pi_{N,\psi}$}\label{Sec:3}
For a finite dimensional representation $\rho$ of a finite group $H$, we write $\Theta_{\rho}$ for the character of $\rho$.
Suppose $\rho'$ be an irreducible representation of the group $H$. 
The multiplicity of $\rho'$ in $\rho$ is defined to be the dimension of $\Hom_{H}(\rho, \rho')$. 
We say that $\rho$ is multiplicity free if the multiplicity of any irreducible representation in $\rho$ is $\leq 1$.
If we define $\left\langle\Theta_{\rho},\Theta_{\rho'}\right\rangle_{H} := \dfrac{1}{|H|}\underset{g\in H}{\sum}\Theta_{\rho}(g) \bar{\Theta}_{\rho'}(g)$, then by \cite[Corollary 2.16]{Fulton-Harris} we have
$$ 
\dim(\Hom_{H}(V,V'))= \left\langle\Theta_{\rho},\Theta_{\rho'}\right\rangle_{H}.
$$
We fix $x \in \F_{q^4}^{\times}$ as in Equation (\ref{Fq4 embedding}) and
consider a strongly cuspidal representation $\pi=\Ind_{T}^{G} \tilde{\phi}_x$ of $\GL_4(\cO_2)$ (see Theorem \ref{constrcution}).
If we write $\pi^{\delta} =\Ind_{\delta^{-1}T \delta \cap P}^{P} \tilde{\phi}_{x}^{\delta^{-1}}$, then by Mackey theory, 
\begin{center}
$\pi_{N,\psi} =\underset{\delta \in \Omega_{0}}{\bigoplus}\pi^{\delta}_{N,\psi}.$ \end{center}
Therefore, $\dim (\pi_{N, \psi}) = \underset{\delta \in \Omega_{0}}{\sum} \dim(\pi^{\delta}_{N, \psi})$.
In what follows we compute the dimension of $\pi^{\delta}_{N, \psi}$ for all $\delta \in \Omega$ with the help of following series of lemmas.
\begin{lemma}\label{dim pi^delta}
Write $\varpi N=\left\lbrace\begin{pmatrix}
    I & \varpi X\\
    0 & I
\end{pmatrix}:X\in M_2(\cO_2)\right\rbrace$.
Then, for every $\delta \in \Omega$, we have 
$$\pi^{\delta}_{N, \psi} \cong \underset{\gamma\in (\delta^{-1} T\delta\cap P) \backslash P/N}{\bigoplus}\Hom_{\varpi N}\left(\tilde{\phi}_{\delta^{-1}x\delta}^{\gamma^{-1}},\psi\right).$$  
\end{lemma}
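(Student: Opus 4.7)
The plan is to apply Mackey's decomposition to $\Res_N^P\pi^\delta$ and then extract the $\psi$-isotypic component via Frobenius reciprocity. Writing $H_\delta := \delta^{-1}T\delta\cap P$, we have $\pi^\delta = \Ind_{H_\delta}^P(\tilde{\phi}_x^{\delta^{-1}}|_{H_\delta})$, and Mackey's formula gives
\[
\Res_N^P\pi^\delta\;\cong\;\bigoplus_{\gamma\in H_\delta\backslash P/N}\Ind_{\gamma^{-1}H_\delta\gamma\cap N}^{\,N}\bigl((\tilde{\phi}_x^{\delta^{-1}})^{\gamma^{-1}}|_{\gamma^{-1}H_\delta\gamma\cap N}\bigr).
\]
Since $\pi^\delta_{N,\psi}=\Hom_N(\psi,\Res_N^P\pi^\delta)$, distributing the Hom over the direct sum and applying Frobenius reciprocity reduces each summand to a Hom over the subgroup $\gamma^{-1}H_\delta\gamma\cap N$.

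The crux of the proof is to establish the identity $\gamma^{-1}H_\delta\gamma\cap N=\varpi N$ for every $\gamma$. The inclusion $\varpi N\subset\gamma^{-1}H_\delta\gamma\cap N$ is straightforward: $K^1_2$ is normal in $G$ and contained in $T=\mathcal O_2^\times K^1_2$, hence $K^1_2\subset H_\delta$, which forces $\varpi N = K^1_2\cap N \subset \gamma^{-1}H_\delta\gamma\cap N$. For the reverse inclusion I would pass to $\bar G=\GL_4(\F_q)$: since $\bar H_\delta=H_\delta/K^1_2$ is contained in the torus $\delta^{-1}\bar T\delta\cong\F_{q^4}^\times$, the subgroup $\gamma^{-1}\bar H_\delta\gamma$ consists entirely of semisimple elements, whereas $\bar N$ is unipotent. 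Their intersection in $\bar G$ is therefore trivial, which gives $\gamma^{-1}H_\delta\gamma\cap N\subset K^1_2\cap N=\varpi N$.

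It remains to identify the character appearing on $\varpi N$. Given $n=I+\varpi\bigl(\begin{smallmatrix}0&X\\0&0\end{smallmatrix}\bigr)\in\varpi N\subset K^1_2$, the identity $\tilde{\phi}_x(I+\varpi Z)=\psi_0(\tr(xZ))$ together with cyclicity of the trace shows that both $(\tilde{\phi}_x^{\delta^{-1}})^{\gamma^{-1}}(n)$ and $\tilde\phi_{\delta^{-1}x\delta}^{\gamma^{-1}}(n)$ evaluate to $\psi_0\bigl(\tr\bigl(\gamma^{-1}\delta^{-1}x\delta\gamma\cdot\bigl(\begin{smallmatrix}0&X\\0&0\end{smallmatrix}\bigr)\bigr)\bigr)$. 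Since the Hom space between two one-dimensional characters is symmetric, one may swap the arguments to obtain the formula stated in the lemma. The only nontrivial point is the torus-versus-unipotent argument in the second paragraph; the rest is routine Mackey/Frobenius bookkeeping.
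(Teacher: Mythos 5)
Your proof follows the same route as the paper's: Mackey decomposition of $\Res^P_N\pi^\delta$, the identification $\gamma^{-1}(\delta^{-1}T\delta\cap P)\gamma\cap N=\varpi N$ (via the normality of $\varpi N$ in $P$ for the easy inclusion, and the torus-versus-unipotent observation modulo $K^1_2$ for the reverse, which is exactly the paper's remark that $(\delta^{-1}T\delta\cap N)$ maps to $\{I\}$ in $\GL_4(\F_q)$), and the agreement of $\tilde\phi_x^{\delta^{-1}}$ with $\tilde\phi_{\delta^{-1}x\delta}$ on $K^1_2$. One small inaccuracy worth fixing: $K^1_2\not\subset P$, so the assertion $K^1_2\subset H_\delta$ is false as stated (and likewise $\bar H_\delta$ is the image of $H_\delta$ in $\GL_4(\F_q)$, not a quotient $H_\delta/K^1_2$); what you actually have, and what suffices together with normality of $\varpi N$ in $P$, is $\varpi N=K^1_2\cap N\subset K^1_2\cap P\subset H_\delta$.
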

\begin{proof}
By Mackey theory,
$\pi^{\delta}_{N,\psi} \cong  \underset{\gamma\in (\delta^{-1} T\delta\cap P) \backslash P/N}{\bigoplus}\Hom_{\gamma^{-1}(\delta^{-1} T\delta\cap P)\gamma\cap N}\left(\tilde{\phi}_{\delta^{-1}x\delta}^{\gamma^{-1}},\psi\right).$ 

The Lemma follows from the fact that the subgroup $\gamma^{-1}(\delta^{-1} T\delta\cap P)\gamma\cap N=\varpi N$ for all $\gamma\in (\delta^{-1}T\delta \cap P) \backslash P /N$, which we prove now. 
For any $\gamma \in P$, we have 
$$\gamma^{-1}(\delta^{-1} T\delta\cap P)\gamma\cap N=\gamma^{-1}(\delta^{-1} T\delta\cap P\cap N)\gamma=\gamma^{-1}(\delta^{-1} T\delta\cap N)\gamma.$$ 
Since $\delta^{-1} T\delta~(\modulo~K^{1}_{2})\cong\mathbb{F}_{q^4}^\times$
we get, $(\delta^{-1} T\delta \cap N )~(\modulo~K^{1}_{2}) = \{ I\}$. 
Then, $\delta^{-1}T\delta\cap N= \varpi N$. 
Also, $\varpi N$ is a normal subgroup of $P$, therefore $\gamma^{-1}(\delta^{-1} T\delta\cap N)\gamma=\varpi N$.
\end{proof}
Note that $(\delta^{-1}T\delta \cap P) \backslash P /N$ is in bijection with $(\delta^{-1} \mathbb{F}_{q^4}^\times \delta \cap \bar{P}) \backslash \bar{P}/\bar{N}$.
\begin{lemma}\label{lemma T-P-N}
A set of representatives of the double coset $(\delta^{-1} \mathbb{F}_{q^4}^\times \delta \cap \bar{P}) \backslash \bar{P}/\bar{N}$
 is given as 
\begin{enumerate}[label = {(\alph*)}]
\item For $\delta=I$, $\left\lbrace\begin{pmatrix}
    g_1 & 0\\
    0 & g_2
\end{pmatrix}:g_1\in \F_{q^2}^\times\backslash\GL_2(\F_q),~g_2\in \GL_2(\F_q)\right\rbrace$ is a set of distinct representatives for 
$(\F_{q^4}^\times\cap \bar{P})\backslash \bar{P}/\bar{N}$.
\item For $\delta\neq I$, $\left\lbrace\begin{pmatrix}
    g_1 & 0\\
    0 & g_2
\end{pmatrix}:g_1\in \F_{q}^\times\backslash\GL_2(\F_q),~g_2\in \GL_2(\F_q)\right\rbrace$ is a set of distinct representatives for $(\delta^{-1}\F_{q^4}^\times\delta\cap \bar{P})\backslash \bar{P}/\bar{N}$.
\end{enumerate}
\end{lemma}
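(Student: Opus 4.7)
The plan is to replace the double coset space by orbits of a subgroup of $\GL_2(\F_q)\times\GL_2(\F_q)$ acting on itself by left multiplication. First, I would use the block-upper-triangular description of $\bar P$ to identify $\bar P/\bar N \cong \GL_2(\F_q)\times \GL_2(\F_q)$ via $\begin{pmatrix}g_1 & *\\0 & g_2\end{pmatrix}\bar N \mapsto (g_1,g_2)$. The double coset space $H_\delta\backslash \bar P/\bar N$, with $H_\delta := \delta^{-1}\F_{q^4}^\times\delta\cap \bar P$, then corresponds to the orbit space of the image $\bar H_\delta$ of $H_\delta$ in $\GL_2(\F_q)\times \GL_2(\F_q)$ acting by left multiplication.

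The second step is to identify $\bar H_\delta$. Note that $\delta^{-1}x\delta \in \bar P$ if and only if $x\cdot(\delta W_0)=\delta W_0$, where $W_0=\Span\{1,\beta^2\}$ and $x$ acts on $\F_{q^4}$ by multiplication. For $\delta=I$ this subspace is the subfield $\F_{q^2}\subset \F_{q^4}$, and its stabilizer in $\F_{q^4}^\times$ is plainly $\F_{q^2}^\times$. For $\delta=A_{u,v}\neq I$, the analysis in the proof of Lemma \ref{double cosets u,v,w} identifies $\delta\cdot W_0 = \Span\{1,\beta^2+u\beta+v\beta^3\}$; this is never a $1$-dimensional $\F_{q^2}$-subspace of $\F_{q^4}$, since $\beta^2+u\beta+v\beta^3\in \F_{q^2}$ together with $\{1,\beta\}$ being an $\F_{q^2}$-basis of $\F_{q^4}$ would force $u=v=0$. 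The orbit-stabilizer analysis in the proof of Lemma \ref{no of distinct double cosets} therefore shows that the stabilizer of $\delta\cdot W_0$ in $\F_{q^4}^\times$ is $\F_q^\times$.

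Next, I would compute the image of the stabilizer in $\bar P/\bar N$. When $\delta=I$, the conditions $a_2=a_3=0$ make both the blocks $X_3$ and $X_2$ of the embedding (\ref{Fq4 embedding}) vanish, so $\F_{q^2}^\times$ sits inside $\bar P$ as $\mathrm{diag}(X_1,X_1)$, giving $\bar H_I=\{(h,h):h\in \F_{q^2}^\times\}$ with $\F_{q^2}^\times\hookrightarrow \GL_2(\F_q)$ by (\ref{F_q^2 embedding}). When $\delta\neq I$, the scalar subgroup $\F_q^\times\subset \GL_4(\F_q)$ is central, hence fixed pointwise under conjugation by $\delta$, and its image in $\bar P/\bar N$ is $\{(\lambda I_2,\lambda I_2):\lambda\in\F_q^\times\}$.

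Finally, I would establish the following elementary fact: if $H$ is a subgroup of $\GL_2(\F_q)$ embedded diagonally in $\GL_2(\F_q)\times\GL_2(\F_q)$, then, fixing a transversal $S$ of $H\backslash \GL_2(\F_q)$, the set $\{(g_1,g_2):g_1\in S,\ g_2\in\GL_2(\F_q)\}$ is a complete and irredundant set of orbit representatives for the left $H$-action. Existence in each orbit: given $(h_1,h_2)$, choose $h\in H$ with $hh_1\in S$ and act to obtain $(hh_1,hh_2)$. Distinctness: if $(g_1,g_2)$ and $(g_1',g_2')$ lie in the same orbit with $g_1,g_1'\in S$, then $g_1'=hg_1$ for some $h\in H$ forces $g_1=g_1'$ and $h=I$, hence $g_2=g_2'$. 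Specializing $H$ to $\F_{q^2}^\times$ yields (a), and to $\F_q^\times$ yields (b). The main obstacle is the stabilizer computation for $\delta\neq I$, which hinges on showing $\delta\cdot W_0$ fails to be an $\F_{q^2}$-line; once the basis observation is in place the remaining arguments amount to routine bookkeeping.
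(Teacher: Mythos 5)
Your proposal is correct and follows essentially the same route as the paper: identify $\bar P/\bar N\cong \GL_2(\F_q)\times\GL_2(\F_q)$, then pin down $\delta^{-1}\F_{q^4}^\times\delta\cap\bar P$ as the stabilizer of $\delta\cdot W_0$, which is $\F_{q^2}^\times$ for $\delta=I$ and $\F_q^\times$ for $\delta\neq I$. The only difference is that you spell out the stabilizer computation and the diagonal-orbit bookkeeping, which the paper compresses into a single "follows by noting that" sentence.
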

\begin{proof}
Clearly, $\bar{P}/\bar{N}\cong \GL_2(\F_q)\times\GL_2(\F_q)$.
Then (a) follows by noting that $\F_{q^4}^\times\cap\bar{P}\cong \F_{q^2}^\times$ and, (b) follows by noting that $\delta^{-1}\F_{q^4}^\times\delta\cap\bar{P}\cong \F_{q}^\times$ for $\delta \neq I$.
\end{proof}

\begin{lemma}\label{Dim trivial}
\begin{enumerate}[label = {(\alph*)}]
    \item $\dim(\pi^{I}_{N,\psi})=q(q-1).$
    \item Let $\delta \neq I$. If $\pi^{\delta}_{N,\psi}\neq 0$ then, $\dim(\pi^{\delta}_{N,\psi})=q(q^2-1)$.
\end{enumerate}
\end{lemma}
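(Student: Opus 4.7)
The plan is to apply Lemma \ref{dim pi^delta}, which reduces the dimension computation to a counting problem. Since $\varpi N$ is abelian, each summand $\Hom_{\varpi N}(\tilde\phi_{\delta^{-1}x\delta}^{\gamma^{-1}}, \psi)$ is either zero or one-dimensional, so $\dim \pi^\delta_{N,\psi}$ equals the number of double coset representatives $\gamma$ for which the two characters agree on $\varpi N$. I would parametrize these representatives using Lemma \ref{lemma T-P-N}, writing $\gamma = \begin{pmatrix} g_1 & 0 \\ 0 & g_2 \end{pmatrix}$ with $g_1, g_2 \in \GL_2(\F_q)$; here $g_1$ ranges over $\F_{q^2}^\times \backslash \GL_2(\F_q)$ if $\delta = I$ and over $\F_q^\times \backslash \GL_2(\F_q)$ otherwise.

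Next I would make the character equality explicit. A direct block computation using Notation \ref{Notations for matrices} shows that the lower-left $2\times 2$ block of $\delta^{-1}x\delta$ is precisely $L_\delta$. For $y = I + \varpi\begin{pmatrix} 0 & X \\ 0 & 0 \end{pmatrix} \in \varpi N$, the conjugate $\gamma y \gamma^{-1}$ has the same shape with $X$ replaced by $g_1 X g_2^{-1}$, so
$$\tilde\phi_{\delta^{-1}x\delta}^{\gamma^{-1}}(y) = \psi_0\bigl(\tr(g_2^{-1} L_\delta g_1 \cdot X)\bigr) \qquad \text{while} \qquad \psi(y) = \psi_0(\tr X).$$
Because $\psi_0|_{\varpi\cO_2}$ is non-trivial and the trace pairing on $M_2(\F_q)$ is non-degenerate, these characters agree for all $X \in M_2(\F_q)$ if and only if $g_2 = L_\delta g_1$.

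It remains to count the $\gamma$'s satisfying this condition. If $L_\delta$ is not invertible then no such $(g_1, g_2)$ exists with $g_2 \in \GL_2(\F_q)$, so $\pi^\delta_{N,\psi} = 0$; in particular, the hypothesis of (b) forces $L_\delta$ to be invertible. If $L_\delta$ is invertible then $g_2$ is uniquely determined by $g_1$, so the dimension equals the number of $g_1$-classes. For $\delta = I$, $L_I = X_3$ has determinant $Y \neq 0$ (plug $u=v=0$ into Equation (\ref{determinant for Lu,v}) and use the observation that $Y\neq 0$), so the count is $|\F_{q^2}^\times \backslash \GL_2(\F_q)| = q(q-1)$, giving (a). For $\delta \neq I$ with $L_\delta$ invertible, the count is $|\F_q^\times \backslash \GL_2(\F_q)| = q(q^2-1)$, giving (b). The only conceptual point is identifying $L_\delta$ as the block of $\delta^{-1}x\delta$ governing the character on $\varpi N$; after that the argument is routine bookkeeping.
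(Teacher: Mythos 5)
Your proof is correct and follows the same route as the paper: reduce via Lemma \ref{dim pi^delta}, parametrize the double cosets via Lemma \ref{lemma T-P-N}, identify $L_\delta$ as the relevant block of $\delta^{-1}x\delta$ to derive the condition $g_2 = L_\delta g_1$ from non-degeneracy of the trace pairing, and count. Your explicit check that $L_I = X_3$ is invertible (via $\det(L_I) = Y \neq 0$ from Equation (\ref{determinant for Lu,v})) is a detail the paper leaves implicit, but otherwise the arguments coincide.
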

\begin{proof}
For $\gamma \in P$ we have 
$$\left\langle \tilde{\phi}_{\delta^{-1}x\delta}^{\gamma^{-1}},\psi\right\rangle_{\varpi N}=1 \text{~if~} \tilde{\phi}_{\delta^{-1}x\delta}^{\gamma^{-1}}|_{\varpi N} =\psi|_{\varpi N}\text{~and~} 0\text{~otherwise.~}$$
Write $\gamma = \begin{pmatrix}
    \tilde{g}_1 & 0\\
    0 & \tilde{g}_2
\end{pmatrix}$ and
using the definition of $\phi_{x}$ on $\varpi N$ we get,
$\tilde{\phi}_{\delta^{-1}x\delta}^{\gamma^{-1}}|_{\varpi N} =\psi|_{\varpi N}$ if and only if the character $X \mapsto \psi_0(\tr((\tilde{g}_2^{-1}\tilde{L}_{\delta}\tilde{g}_1-I)X))$ of $\varpi N$ is trivial (recall $L_{\delta}$ as defined in Notation \ref{Notations for matrices}\ref{Notation L_I AND B_I}), which is equivalent to
$L_{\delta}g_{1}= g_{2}$. 
\begin{enumerate}[label = {(\alph*)}]
\item Let $\delta=I$.
Note that $L_{I}=X_3$. By Lemma \ref{lemma T-P-N} (a), we get 
\begin{align*} 
\dim(\pi^{I}_{N,\psi}) & = \Bigg{|}\Bigg{\{}(g_1,g_2)\in \left(\F_{q^2}^\times\backslash\GL_{2}(\F_q)\right)\times \GL_2(\F_q):g_2=X_3g_1\Bigg{\}}\Bigg{|}
\\
& = q(q-1).
 \end{align*}
\item Let $\delta\neq I$.  
By Lemma \ref{lemma T-P-N} (b), we get
\begin{align}
\dim(\pi^{\delta}_{N,\psi})
& = \dfrac{1}{|\varpi M_2(\mathfrak{o}_2)|}\sum_{\substack{\gamma\in(\delta^{-1} T\delta\cap P) \backslash P/N\\
X\in \varpi M_2(\mathfrak{o}_2)}}\psi_0(\tr((\tilde{g}_2^{-1}\tilde{L}_{\delta}\tilde{g}_1-I)X))
\nonumber \\
& =\Bigg{|}\Bigg{\{}(g_1,g_2)\in \left(\F_{q}^\times\backslash\GL_{2}(\F_q)\right)\times \GL_2(\F_q):g_2=L_{\delta}g_1\Bigg{\}}\Bigg{|}
\nonumber \\
& =
\begin{cases} 
        q(q^2-1) & ~\text{if}~L_{\delta}\text{ is invertible,} \\
        0 & \text{~otherwise.}
\end{cases}
\qedhere
\end{align}
\end{enumerate}
\end{proof}
\begin{lemma}\label{Uniqueness}
There exists exactly one $\delta\in \Omega_0$ such that $\pi^\delta_{N,\psi}=0.$
\end{lemma}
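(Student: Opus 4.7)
The plan is to give a geometric interpretation of the condition $\det(L_\delta)=0$ in terms of the $\F_q$-subspace of $\F_{q^4}$ associated with $\delta$, and then identify the set of $\delta \in \Omega_0$ satisfying it with a single $\bar T$-orbit on the Grassmannian $\mathrm{Gr}(4,2)$. For each $\delta \in \Omega$, write $W_\delta := \delta\cdot\Span_{\F_q}\{e_1,e_2\} \subset \F_{q^4}$ for the 2-dimensional $\F_q$-subspace corresponding to the coset $\delta\bar P \in \mathrm{Gr}(4,2)$; under the identification $\bar T = \F_{q^4}^\times$, distinct elements of $\Omega_0$ correspond to distinct $\bar T$-orbits of $W_\delta$ on $\mathrm{Gr}(4,2)$.

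First I would verify, by a direct block computation with $\delta = \bigl(\begin{smallmatrix} I & 0 \\ U & I \end{smallmatrix}\bigr)$ and $x = \bigl(\begin{smallmatrix} X_1 & X_2 \\ X_3 & X_1 \end{smallmatrix}\bigr)$, that $L_\delta$ equals the bottom-left $2\times 2$ block of $\delta^{-1} x \delta$. Consequently, $L_\delta$ is singular if and only if there exists a nonzero $w \in \F_q^2$ with $\delta^{-1}x\delta\bigl(\begin{smallmatrix} w \\ 0\end{smallmatrix}\bigr) \in \Span\{e_1,e_2\}$, which translates via $\delta$ to the existence of a nonzero $v \in W_\delta$ with $xv \in W_\delta$. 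Since $x$ is regular elliptic we have $x \notin \F_q$, so $v$ and $xv$ are $\F_q$-linearly independent, forcing $W_\delta = \Span_{\F_q}\{v, xv\} = v\cdot V_x$, where $V_x := \F_q + \F_q\cdot x$ is a fixed 2-dimensional $\F_q$-subspace of $\F_{q^4}$. Conversely, every subspace of the form $v V_x$ witnesses the condition with $v$ itself. Hence $\det(L_\delta)=0$ iff $W_\delta$ lies in the single $\F_{q^4}^\times$-orbit of $V_x$ on $\mathrm{Gr}(4,2)$.

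By exhaustiveness of $\Omega$ for the double cosets $\bar T \backslash \bar G / \bar P$, the orbit of $V_x$ admits a unique representative $\delta^{\ast} \in \Omega_0$, which is therefore the unique element of $\Omega_0$ with $\det(L_{\delta^\ast})=0$. Moreover $\delta^\ast \neq I$: for if $W_I = \F_{q^2}$ were in the orbit of $V_x$, say $\F_{q^2} = \xi V_x$ for some $\xi \in \F_{q^4}^\times$, then $\xi = \xi\cdot 1 \in \F_{q^2}$, whence $V_x = \xi^{-1}\F_{q^2} = \F_{q^2}$, forcing $x \in \F_{q^2}$ and contradicting the regular elliptic hypothesis. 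Combined with Lemma~\ref{Dim trivial}, this shows $\delta^\ast$ is the unique $\delta \in \Omega_0$ with $\pi^\delta_{N,\psi}=0$. The main obstacle I anticipate is the first step, where the block-matrix computation and the precise translation of ``$L_\delta$ singular'' into the existence of $v \in W_\delta\setminus\{0\}$ with $xv \in W_\delta$ require careful bookkeeping; once this is set up, the remainder follows quickly from the orbit classification in Lemma~\ref{no of distinct double cosets}.
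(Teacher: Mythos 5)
Your proof is correct and takes a genuinely different, more conceptual route than the paper.

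The paper works directly with the explicit polynomial $\det(L_\delta)$ in $u,v$: it first shows existence of a root by a discriminant-counting argument (using the square-freeness assumption from (\ref{Norm is square free})), and then shows uniqueness by eliminating $Y$ from the two equations $\det(L_\delta)=\det(L_{\delta'})=0$ to obtain $X\cdot C(\delta,\delta')=0$ and invoking Lemma \ref{Double cosets}. You instead observe that $L_\delta$ is the lower-left block of $\delta^{-1}x\delta$, so that $\det(L_\delta)=0$ iff the $2$-plane $W_\delta = \delta\cdot\Span\{1,\beta^2\}$ contains a nonzero $v$ with $xv\in W_\delta$, i.e. iff $W_\delta = v\,V_x$ with $V_x=\F_q+\F_q x$; this pins down the vanishing locus as a single $\F_{q^4}^\times$-orbit on $\mathrm{Gr}(4,2)$, giving existence and uniqueness at once. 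Your block computation for $\delta^{-1}x\delta$ does indeed yield $X_1U-UX_1+X_3-UX_2U$ in the lower-left corner, matching Notation \ref{Notations for matrices}\ref{Notation 3.7}, and the regular-elliptic hypothesis guarantees $x\notin\F_q$ (so $\{v,xv\}$ is independent) as well as $x\notin\F_{q^2}$ (so $V_x\neq\F_{q^2}$, giving $\delta^\ast\neq I$). What your approach buys is a coordinate-free argument that sidesteps the quadratic-form discriminant analysis and the explicit $C(\delta,\delta')$ bookkeeping, and clarifies why the answer is one orbit: it is precisely the $\F_{q^4}^\times$-orbit of the ``tangent plane'' $V_x$. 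The paper's computational approach, while heavier, produces the explicit formula (\ref{determinant for Lu,v}) and the criterion $C(\delta,\delta')=0$, both of which are reused later (e.g. in Theorem \ref{Matrices conjugate iff cosets are same}).
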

\begin{proof}
First, we prove that there exists a $\delta =A_{u,v} \in \Omega$ for some $u,v \in \F_{q}$ such that $\pi^{\delta}_{N, \psi} =0$. 
From the proof of Lemma \ref{Dim trivial} it follows that $\pi^{\delta}_{N, \psi} =0$ if and only if $L_{\delta}$ is not invertible, i.e. $\det(L_{\delta})=0$.

Recall that $\det(L_{\delta})=
-(u^2X+uv(2aX+Y)+v^2(2aY+X(a^2-b^2\alpha))-Y)$ where $Y \neq 0$, see (\ref{determinant for Lu,v}).
We claim that there exists $u, v \in \F_{q}$ such that $\det(L_{\delta})=0$.

If $X=0$, then $\det(L_{\delta})=0 \Rightarrow (1- uv - 2av^2)Y =0$ and it can be easily verified that there exist $u,v \in F_{q}$ satisfying this condition.
Now we assume $X \neq 0$ and consider the expression for $\det(L_{\delta})$ as a quadratic polynomial in variable $u$, whose discriminant is
$\left(Y^2-4aXY+4X^2b^2\alpha \right)v^2+4XY$.
Note that if for some $v \in \F_{q}$, the discriminant is a square in $\F_{q}$ then there exists a $u \in \F_{q}$ such that $\det(L_{\delta}) =0$.
Write the discriminant as $C_1v^2+C_2$ for $C_1=Y^2-4aXY+4X^2b^2\alpha$ and $C_2=4XY$. 
Note that $C_1 \neq 0$ otherwise,
\begin{center}
 $Y^2-4aXY+4X^2b^2\alpha = (Y-2aX)^2 - 4X^2 (a^2 -b^2 \alpha) =0$ 
\end{center}
i.e. $a^2 - b^2 \alpha$  is a square element in $\F_q$,
which contradicts our assumption in (\ref{Norm is square free}).
Since $C_1 \neq 0$, $| \{ C_1 v^2 + C_2 : v \in F_{q} \}| = \frac{q+1}{2}$ but the number of non-square elements in $\F_q$ is $\frac{q-1}{2}$.
Therefore, there exists a $v \in \F_{q}$ such that $C_1 v^2 + C_2$ is a square in $\F_{q}$.
This completes the proof of the existence of $\delta \in \Omega$ such that $\det(L_{\delta})= 0$.
\\ 

Now we prove the uniqueness, i.e. if $\pi^{\delta}_{N, \psi}=0$ and $\pi^{\delta'}_{N, \psi}=0$ then $T \delta P = T \delta' P$.
Write $\delta =A_{u,v}$ and $\delta' =A_{u',v'}$ then 
\begin{align} \label{det L_delta  1 and 2}
\det(L_{\delta})= 0 \text{~~and ~~}  \det(L_{\delta'})=0.
\end{align}
If $X= 0$, then (\ref{det L_delta  1 and 2}) gives $1-uv-2av^2=0$ and $1-u'v'-2av'^2=0$.
It can be easily verified that $u,v$ and $u', v'$ satisfy the condition in Lemma \ref{Double cosets} giving $T \delta P = T \delta' P$. \\
Now assume that $X \neq 0$.
By eliminating $Y$ from the two equations in (\ref{det L_delta  1 and 2}) 
we get
\begin{align*}
X \cdot C(\delta, \delta') = 0.
\end{align*}
Since $X \neq 0$, using Lemma \ref{Double cosets}, we get $T \delta P = T \delta' P$.
\end{proof}
\begin{proposition} \label{dimension}
Let $\pi$ be a strongly cuspidal representation of $\GL_4(\mathfrak{o}_2)$ then  $$\dim(\pi_{N,\psi})=q^3(q-1).$$
\end{proposition}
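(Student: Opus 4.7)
The strategy is straightforward once the preceding lemmas are in place. From the Mackey decomposition established earlier one has
\[
\dim(\pi_{N,\psi}) = \sum_{\delta \in \Omega_0}\dim(\pi^\delta_{N,\psi}),
\]
and by Lemma \ref{no of distinct double cosets} the index set $\Omega_0$ has size $q+1$. The plan is to show how these $q+1$ summands split: one contributes $q(q-1)$ (coming from $\delta = I$), one contributes $0$ (by Lemma \ref{Uniqueness}), and the remaining $q-1$ contribute $q(q^2-1)$ each.

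First I would handle $\delta = I$. Here $L_I = X_3$, and a direct evaluation of the determinant using the block in (\ref{F_q^2 embedding}) gives $\det(X_3) = Y$, which is non-zero since $x$ is regular elliptic (this is recorded just after Notation \ref{Notations for matrices}). Hence $L_I$ is invertible and Lemma \ref{Dim trivial}(a) yields $\dim(\pi^I_{N,\psi}) = q(q-1)$. For the remaining representatives, Lemma \ref{Uniqueness} guarantees that exactly one $\delta \in \Omega_0$ contributes zero; since the identity contributes $q(q-1) > 0$, this vanishing representative must be some non-identity $\delta$. The other $q-1$ non-identity representatives therefore all have $L_\delta$ invertible, and each contributes $q(q^2-1)$ by Lemma \ref{Dim trivial}(b). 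Summing,
\[
\dim(\pi_{N,\psi}) = q(q-1) + (q-1)\cdot q(q^2-1) = q(q-1)\bigl(1+(q-1)(q+1)\bigr) = q^3(q-1).
\]

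I do not anticipate any genuine obstacle in this step: the proof is essentially a combinatorial reassembly of the three ingredients already in hand, namely the count $|\Omega_0| = q+1$, the dimension formula of Lemma \ref{Dim trivial} (which distinguishes $\delta = I$ from $\delta \neq I$), and the uniqueness of the vanishing coset in Lemma \ref{Uniqueness}. The only small verification is that $\delta = I$ is not the vanishing representative, which reduces to $\det(X_3) = Y \neq 0$.
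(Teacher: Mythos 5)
Your argument is correct and follows the same route as the paper's own proof: decompose over $\Omega_0$, invoke Lemma \ref{Dim trivial} for the dimensions, use Lemma \ref{Uniqueness} for the single vanishing coset, and sum. Your extra remark that $\det(L_I)=\det(X_3)=Y\neq 0$ (so $\delta=I$ cannot be the vanishing representative) is a worthwhile explicit check, but it is already built into the paper via the unconditional statement of Lemma \ref{Dim trivial}(a) and the observation following Notation \ref{Notations for matrices} that $Y\neq 0$.
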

\begin{proof}
Note that $\dim(\pi_{N,\psi}) =\underset{\delta\in \Omega_0}{\sum}\dim(\pi^{\delta}_{N,\psi})$.
Recall that $| \Omega_{0}|= q+1$ and by Lemma \ref{Uniqueness}, there exists a unique $\delta \in \Omega_{0}$ such that $\delta \neq I$ and $\pi^{\delta}_{N, \psi} = 0$.
Using Lemma \ref{Dim trivial},  we get
\begin{equation*}
\dim(\pi_{N,\psi}) = q(q-1)+(q-1) \cdot q(q^2-1)
= q^3(q-1). \qedhere 
\end{equation*} 
\end{proof}
One of the main theorems in this paper is that $\pi_{N, \psi}$ is isomorphic to $\Ind_{\mathfrak{O}_{2}^{\times}}^{\GL_2(\mathfrak{o}_2)} ( \theta|_{\mathfrak{O}_{2}^{\times}})$ where $\pi$ is a strongly cuspidal representation of $\GL_4(\cO_2)$ associated to a strongly primitive character $\theta$ of $\mathcal{O}_{2}^{\times}$. 
Proposition \ref{dimension} proves that these two representations have the same dimension.

\section{Characters of $\pi^{\delta}_{N,\psi}$}\label{Sec : 5}
\vspace{0.1 cm}
For a finite dimensional representation $\rho$ of a group we write $\omega_{\rho}$ for its central character, if it exists.
\begin{lemma}
Let $\mathcal{M}, \mathcal{N}$ be subgroups of a finite group $\mathcal{P}$ such that $\mathcal{N}$ is a normal subgroup and $\mathcal{P} = \mathcal{M} \ltimes \mathcal{N}$. 
Let $\psi : \mathcal{N} \rightarrow \C^{\times}$ be a character of $\mathcal{N}$ and $\mathcal{M}_{\psi} = \{ m \in \mathcal{M} : \psi(mnm^{-1}) = \psi(n) \text{~ for all ~} n \in \mathcal{N}\}$.
Let $(\rho, V)$ be a finite dimensional representation of $\mathcal{P}$ and $V_{\mathcal{N}, \psi} = \{ v \in V : \rho(n)v = \psi(n) v \text{~for all~} n \in \mathcal{N} \}$ which is a representation of $\mathcal{M}_{\psi}$ denoted by $(\rho_{\mathcal{N}, \psi}, V_{\mathcal{N}, \psi})$.
The character of $\rho_{\mathcal{N}, \psi}$ is given by
\begin{center}
$\Theta_{\rho_{\mathcal{N}, \psi}} (m) = \dfrac{1}{|\mathcal{N}|} \underset{n \in \mathcal{N}}{\sum} \Theta_{\rho} (mn) \overline{\psi}(n)$.
\end{center}
\end{lemma}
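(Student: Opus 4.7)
The plan is to realise $V_{\mathcal{N},\psi}$ as the image of the standard averaging/projection operator and then extract the trace of $\rho(m)$ on it by a block-matrix argument. Concretely, set
$$P \;:=\; \frac{1}{|\mathcal{N}|}\sum_{n\in\mathcal{N}}\overline{\psi}(n)\,\rho(n) \;\in\; \End(V).$$
First I would verify that $P$ is an idempotent whose image is exactly $V_{\mathcal{N},\psi}$. The substitution $n\mapsto n_0^{-1}n$ inside the defining sum immediately gives $\rho(n_0)P = \psi(n_0)P$ for every $n_0\in\mathcal{N}$, so $\mathrm{Im}(P)\subseteq V_{\mathcal{N},\psi}$; conversely, any $v\in V_{\mathcal{N},\psi}$ satisfies $Pv=v$, so $\mathrm{Im}(P) = V_{\mathcal{N},\psi}$ and in particular $P^2 = P$.

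Next I would observe that the condition $m\in\mathcal{M}_\psi$ is exactly what is needed for $\rho(m)$ to preserve $V_{\mathcal{N},\psi}$: for $v\in V_{\mathcal{N},\psi}$ and $n\in\mathcal{N}$,
$$\rho(n)\rho(m)v \;=\; \rho(m)\rho(m^{-1}nm)v \;=\; \psi(m^{-1}nm)\rho(m)v \;=\; \psi(n)\rho(m)v.$$
Fixing the decomposition $V = V_{\mathcal{N},\psi}\oplus\ker(P)$, in which $P$ is the projection onto the first summand, the operator $\rho(m)$ is block upper-triangular with top-left block equal to $\rho_{\mathcal{N},\psi}(m)$. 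Therefore $\rho(m)P$ has $\rho_{\mathcal{N},\psi}(m)$ in its top-left block and zero elsewhere, and so
$$\trace\bigl(\rho(m)P\bigr) \;=\; \trace\bigl(\rho_{\mathcal{N},\psi}(m)\bigr) \;=\; \Theta_{\rho_{\mathcal{N},\psi}}(m).$$

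Finally, expanding the definition of $P$ inside the trace and using linearity gives
$$\trace\bigl(\rho(m)P\bigr) \;=\; \frac{1}{|\mathcal{N}|}\sum_{n\in\mathcal{N}}\overline{\psi}(n)\,\trace\bigl(\rho(mn)\bigr) \;=\; \frac{1}{|\mathcal{N}|}\sum_{n\in\mathcal{N}}\overline{\psi}(n)\,\Theta_\rho(mn),$$
and comparing the two expressions for $\trace(\rho(m)P)$ yields the claimed formula. The only point that deserves care in the write-up is the invariance of $V_{\mathcal{N},\psi}$ under $\rho(m)$, which is forced exactly by $m\in\mathcal{M}_\psi$ and without which the statement would not even make sense; the semidirect product hypothesis is used only to ensure $\mathcal{M}_\psi$ sits inside $\mathcal{P}$ as a genuine subgroup on which $\rho(m)$ acts. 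There is no real obstacle here --- this is a routine character-theoretic averaging argument.
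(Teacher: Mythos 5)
Your proof is correct and follows essentially the same route as the paper's: both introduce the averaging projection $P = \frac{1}{|\mathcal{N}|}\sum_{n}\overline{\psi}(n)\rho(n)$ onto $V_{\mathcal{N},\psi}$ and compute $\Theta_{\rho_{\mathcal{N},\psi}}(m)$ as $\tr\bigl(\rho(m)\circ P\bigr)$, then expand by linearity. You spell out the idempotence of $P$ and the $\mathcal{M}_\psi$-invariance of $V_{\mathcal{N},\psi}$ more explicitly than the paper does, but the underlying argument is identical.
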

\begin{proof}
The projection map $\Pr : V \rightarrow V_{\mathcal{N}, \psi}$  is given by $\Pr(v) = \frac{1}{|\mathcal{N}|} \underset{n \in \mathcal{N}}{\sum} \rho(n) \overline{\psi} (n) v$. 
Then, 
\begin{align*}\Theta_{\rho_{\mathcal{N}, \psi}} (m) 
& = \tr (\rho_{\mathcal{N}, \psi} (m)) 
= \tr{(\rho(m)|_{V_{\mathcal{N}, \psi}})} 
= \tr{(\rho \circ \Pr)} 
\\
& = \frac{1}{|\mathcal{N}|} \sum_{n \in \mathcal{N}} \tr(\rho(mn)) \overline{\psi}(n) 
= \frac{1}{|\mathcal{N}|} \sum_{n \in \mathcal{N}} \Theta_{\rho}(mn) \overline{\psi}(n).\qedhere
\end{align*}
\end{proof}
The main application of the above lemma is the following.
\begin{corollary} \label{Greens formula}
Let $\rho$ be a finite dimensional representation of $P_{n,n} = M \ltimes N$ where $\mathcal{M} \cong \GL_{n}(\cO_l) \times \GL_{n}(\cO_l)$ and $N \cong M_{n}(\cO_l)$. 
Then, $g \in \GL_{n}(\cO_{l})$ we have
$$
\Theta_{\rho_{N, \psi}}(g) = \dfrac{1}{|N|} \sum\limits_{\substack{{X \in N}}} \Theta_{\rho} \begin{pmatrix} g & X \\ 0 & g \end{pmatrix} \overline{\psi}(Xg^{-1}).
$$
\end{corollary}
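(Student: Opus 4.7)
The plan is to apply the preceding lemma directly to the semidirect product structure $P_{n,n} = \mathcal{M} \ltimes N$, with $\mathcal{M} \cong \GL_n(\cO_l) \times \GL_n(\cO_l)$ the Levi and $N \cong M_n(\cO_l)$ the unipotent radical. The only genuine piece of input I would need beyond plugging in is to identify $\mathcal{M}_\psi$. Since the Levi acts on $N$ by $(g_1, g_2) \cdot Y = g_1 Y g_2^{-1}$, the invariance condition $\psi_0(\tr(g_1 Y g_2^{-1})) = \psi_0(\tr(Y))$ for all $Y$ reduces, via the nondegeneracy of $(Y, Z) \mapsto \psi_0(\tr(YZ))$ and the cyclicity of the trace, to $g_2^{-1} g_1 = I$. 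Hence $\mathcal{M}_\psi$ is the diagonal copy of $\GL_n(\cO_l)$, which is precisely the group acting on $V_{N,\psi}$ as stated in Section~\ref{Sec:1}.

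Next, I would substitute $m = \diag(g, g)$ and parametrize $n \in N$ as $n = \left( \begin{smallmatrix} I & Y \\ 0 & I \end{smallmatrix} \right)$ with $Y \in M_n(\cO_l)$ into the formula of the preceding lemma. A direct matrix multiplication gives
$$
mn = \begin{pmatrix} g & gY \\ 0 & g \end{pmatrix},
$$
and therefore
$$
\Theta_{\rho_{N,\psi}}(g) = \frac{1}{|N|} \sum_{Y \in N} \Theta_{\rho} \begin{pmatrix} g & gY \\ 0 & g \end{pmatrix} \overline{\psi}(Y).
$$

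Finally, I would perform the change of variable $X = gY$, which is a bijection of $N$ with itself since $g \in \GL_n(\cO_l)$ is invertible. Under this substitution, $Y = g^{-1} X$, and the cyclicity of the trace yields $\psi(Y) = \psi_0(\tr(g^{-1} X)) = \psi_0(\tr(X g^{-1})) = \psi(X g^{-1})$. Substituting, the expression becomes exactly the right-hand side of the corollary. There is no real obstacle here: the corollary is simply the specialization of the preceding lemma to the concrete semidirect product $P_{n,n}$, with the trivial but essential rewriting of $\psi(g^{-1} X)$ as $\psi(X g^{-1})$ via trace-cyclicity.
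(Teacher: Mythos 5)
Your proof is correct, and it is exactly the natural derivation: the paper states this corollary immediately after the lemma without writing out the substitution, so you have simply filled in the intended specialization. The change of variable $X = gY$ together with $\tr(g^{-1}X) = \tr(Xg^{-1})$ is precisely what turns $\overline{\psi}(Y)$ into $\overline{\psi}(Xg^{-1})$, and your identification of $\mathcal{M}_\psi$ as the diagonal copy of $\GL_n(\cO_l)$ (via nondegeneracy of the trace pairing) matches the observation the authors already make in Section~\ref{Sec:1}.
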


We first calculate $\Theta_{\pi^{\delta}_{N, \psi}}$ on $J^{1}_{2} =I + \varpi M_{2}(\cO_2)$, for which we need $\Theta_{\pi^{\delta}}$.
By using the formula for the character of an induced representation,
the value $\Theta_{\pi^{\delta}}\begin{pmatrix}
    I+\varpi A & X\\
    0 & I+\varpi A
\end{pmatrix}$ equals
$$ \dfrac{1}{|\delta^{-1} T\delta\cap P|}\sum\limits_{\substack{S,R\in \GL_2(\mathfrak{o}_2) \\ Q\in M_2(\mathfrak{o}_2) }}\tilde{\phi}_x^{\delta^{-1}}\left(\begin{pmatrix}
     S & Q\\
     0 & R
\end{pmatrix}^{-1}\begin{pmatrix}
    I+\varpi A & X\\
    0 & I+\varpi A
\end{pmatrix}\begin{pmatrix}
    S & Q\\
    0 & R
\end{pmatrix}\right).$$
It is clear that this value is zero if $X\notin \varpi M_2(\cO_2)$. 
Then by using Corollary \ref{Greens formula}, 
$\Theta_{\pi^{\delta}_{N,\psi}}(I+\varpi A)$ is given by
\begin{align}\label{equation 1}
\resizebox{1.0\linewidth}{!}{\begin{tabular}{c}
$\frac{1}{q^{8}|\delta^{-1}T\delta\cap P|}\sum\limits_{\substack{\\ S,R\in \GL_2(\mathfrak{o}_2) \\ X\in \varpi M_2(\mathfrak{o}_2) \\ Q\in M_2(\mathfrak{o}_2)}}  
\tilde{\phi}_x^{\delta^{-1}}
\begin{pmatrix}
    I+\varpi S^{-1}AS & 0\\
    0 & I+\varpi R^{-1}AR
\end{pmatrix}  
\tilde{\phi}_x^{\delta^{-1}}\begin{pmatrix}
    I & \varpi(S^{-1}AQ-S^{-1}QR^{-1}AR)\\
    0 & I
\end{pmatrix}$\vspace{-0.5cm}\\ 
$ \tilde{\phi}_x^{\delta^{-1}}\begin{pmatrix}
    I & S^{-1}XR
    \\
    0 & I
\end{pmatrix} \overline{\psi}(X)$
\end{tabular}}
\end{align}
We compute this by first summing over $X$, then over $Q$, and then over $(S,R)$.

\begin{lemma}\label{S,X,R}
For $\delta \in \Omega$, let $\pi^{\delta}_{N,\psi}\neq 0$ and recall $L_{\delta}$ and $\mathcal{B}_{\delta}$ from Notation \ref{Notations for matrices} \ref{Notation 3.7}, \ref{Notation 3.8}.
\begin{enumerate}[label = {(\alph*)}]
\item For any $S, R \in \GL_2(\cO_2)$ we have
\begin{center}
$\underset{X\in \varpi M_2(\mathfrak{o}_2)}{\sum}\tilde{\phi}_x^{\delta^{-1}}\begin{pmatrix}
    I & S^{-1}XR
    \\
    0 & I
\end{pmatrix}\overline{\psi}(X)=\begin{cases}
    q^4 & \text{if~} S=R\tilde{L}_{\delta}, \\
    0 & \text{otherwise.}
\end{cases}$
\end{center}
\item For any $\delta$ and $S,R\in \GL_2(\cO_2)$ with $S=R \tilde{L}_{\delta}$, 
we have 
\begin{center} $\underset{Q\in M_2(\mathfrak{o}_2)}{\sum}\tilde{\phi}_x^{\delta^{-1}}\begin{pmatrix}
    I & \varpi\left(S^{-1}AQ-S^{-1}QR^{-1}AR\right)\\
    0 & I
\end{pmatrix}=q^8$.
 \end{center}
 \item For $S=R\tilde{L}_{\delta}$, we have
\begin{align}
\tilde{\phi}_x^{\delta^{-1}}\begin{pmatrix}
        I+\varpi S^{-1}AS & 0\\
        0 & I+\varpi R^{-1}AR
    \end{pmatrix}=\psi_0(\varpi~\tr(R\tilde{\mathcal{B}_{\delta}}R^{-1}A)).
\end{align}
\end{enumerate}
\end{lemma}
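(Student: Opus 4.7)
All three parts follow the same template, so I would set up one common framework and then specialize. The two facts to exploit throughout are:

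\begin{itemize}
\item For $\delta \in \GL_4(\mathfrak{o}_2)$ and $W \in M_4(\mathfrak{o}_2)$, conjugation satisfies $\delta(I+\varpi W)\delta^{-1} = I + \varpi\,\delta W \delta^{-1}$, since $\varpi^2 = 0$.
\item On $K^1_2$ the character $\tilde\phi_x$ agrees with $\phi_x$, and for any lift of $x$ to $M_4(\mathfrak{o}_2)$ one has $\phi_x(I+\varpi W) = \psi_0(\varpi\,\tr(xW))$, so the value depends only on $\bar W \in M_4(\mathbb{F}_q)$.
\end{itemize}

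With $\delta = A_{u,v} = \bigl(\begin{smallmatrix} I & 0 \\ U & I \end{smallmatrix}\bigr)$ and $x = \bigl(\begin{smallmatrix} X_1 & X_2 \\ X_3 & X_1 \end{smallmatrix}\bigr)$, I would compute once and for all that for $M \in M_2(\mathfrak{o}_2)$,
$$
\delta \begin{pmatrix} 0 & M \\ 0 & 0 \end{pmatrix} \delta^{-1} = \begin{pmatrix} -MU & M \\ -UMU & UM \end{pmatrix},
$$
so that pairing against $x$ and using cyclicity of the trace collapses the four resulting summands into $\tr(L_\delta M)$, with $L_\delta = X_1 U - U X_1 + X_3 - U X_2 U$ exactly as in Notation~\ref{Notations for matrices}\ref{Notation 3.7}. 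This is the only calculation needed for (a) and (b).

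For (a), I would write $X = \varpi X'$ with $X' \in M_2(\mathbb{F}_q)$ and apply the above identity to $M = S^{-1} X' R$, getting the character value $\psi_0(\varpi\,\tr(L_\delta S^{-1} X' R)) = \psi_0(\varpi\,\tr(R L_\delta S^{-1} X'))$. Multiplying by $\overline{\psi}(X) = \psi_0(-\varpi\,\tr(X'))$ and summing over the $q^4$ elements $X' \in M_2(\mathbb{F}_q)$ turns the sum into the standard orthogonality relation for additive characters of $(M_2(\mathbb{F}_q),+)$: it equals $q^4$ when $\bar R\, L_\delta\, \bar S^{-1} = I$, i.e.\ when $S \equiv R\tilde L_\delta \pmod{\varpi}$, and $0$ otherwise. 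For (b), the same formula gives that the character value at the matrix $\bigl(\begin{smallmatrix} I & \varpi(S^{-1}AQ - S^{-1}QR^{-1}AR) \\ 0 & I \end{smallmatrix}\bigr)$ equals $\psi_0(\varpi\,\tr(L_\delta(S^{-1}AQ - S^{-1}QR^{-1}AR)))$; when $S = R\tilde L_\delta$, $L_\delta S^{-1}$ reduces to $R^{-1}$ modulo $\varpi$, and cyclicity of the trace makes both summands equal to $\tr(R^{-1}AQ)$ modulo $\varpi$, so the value is $1$ for every $Q$ and the sum contributes $|M_2(\mathfrak{o}_2)| = q^8$.

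For (c) I would compute separately, since the matrix is diagonal rather than unipotent. Writing the argument as $I + \varpi\bigl(\begin{smallmatrix} S^{-1}AS & 0 \\ 0 & R^{-1}AR \end{smallmatrix}\bigr)$, conjugation by $\delta$ yields $I + \varpi Z$ with $Z = \bigl(\begin{smallmatrix} S^{-1}AS & 0 \\ U S^{-1}AS - R^{-1}AR\,U & R^{-1}AR \end{smallmatrix}\bigr)$. Expanding $\tr(xZ)$ and applying cyclicity gives
$$
\tr\bigl((S(X_1 + X_2 U)S^{-1} + R(X_1 - UX_2)R^{-1})A\bigr),
$$
and substituting $S = R\tilde L_\delta$ factors out $R(\cdot)R^{-1}$ around $\tilde L_\delta(X_1 + X_2 U)\tilde L_\delta^{-1} + X_1 - UX_2 = \tilde{\mathcal B}_\delta$, yielding $\psi_0(\varpi\,\tr(R\tilde{\mathcal B}_\delta R^{-1} A))$ as claimed.

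None of the three parts presents a genuine obstacle; the whole lemma is a bookkeeping computation. The only point that requires a little care is ensuring that the various matrices $L_\delta$, $\mathcal{B}_\delta$, $x$ can be lifted coherently from $\mathbb{F}_q$ to $\mathfrak{o}_2$, but this is legitimate because every trace appearing inside $\psi_0$ is multiplied by $\varpi$, so only its reduction modulo $\varpi$ matters.
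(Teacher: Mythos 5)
Your argument is correct and follows essentially the same strategy as the paper's: conjugate by $\delta$, use $\phi_x(I+\varpi W)=\psi_0(\varpi\,\tr(\tilde x W))$, and simplify with cyclicity of the trace to recover $L_\delta$ and $\mathcal{B}_\delta$. The one place where you differ slightly in phrasing is a genuine improvement: you correctly state the condition in (a) as $S\equiv R\tilde L_\delta\pmod{\varpi}$ rather than the paper's literal $S=R\tilde L_\delta$, which is what the orthogonality argument actually yields and what the later $q^8$-counting in Proposition~\ref{Charcters at K12} tacitly assumes.
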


\begin{proof}
\begin{enumerate}[label = {(\alph*)}]
\item Recall that $\phi_x(I+\varpi A)=\psi_0(\varpi~\tr(\tilde{x}A))$. Since $X\in \varpi M_2(\cO_2)$, 
for any $\delta\in \Omega$, we have
\begin{align*}\label{equation 1 in lemma 5.2}
\underset{X\in \varpi M_2(\mathfrak{o}_2)}{\sum}\tilde{\phi}_x^{\delta^{-1}}\begin{pmatrix}
    I & S^{-1}XR
    \\
    0 & I
\end{pmatrix}\overline{\psi}(X) & =\underset{X\in \varpi M_2(\mathfrak{o}_2)}{\sum}\psi_0\left(\varpi~\tr(    R\tilde{L}_{\delta}S^{-1}-I)X\right)
\\
& =
\begin{cases}
     |\varpi M_2(\mathfrak{o}_2)| & ~\text{if}~ S=R\tilde{L}_{\delta}, \\
    0 & \text{otherwise.}
\end{cases}
\end{align*}
\item Assume $S=R\tilde{L}_{\delta}$. The statement follows if we prove that the character 
$$Q\mapsto\tilde{\phi}_x^{\delta^{-1}}\begin{pmatrix}
    I & \varpi\left(S^{-1}AQ-S^{-1}QR^{-1}AR\right)\\
    0 & I
\end{pmatrix}$$ 
is the trivial character of $M_2(\mathfrak{o}_2)$.
Since $\tilde{\phi}_x^{\delta^{-1}}|_{J^{1}_{2}} = \phi_{\delta^{-1} x \delta}|_{J^1_2}$, we get 
\begin{align*}
    \tilde{\phi}_x^{\delta^{-1}}\begin{pmatrix}
        I & \varpi (S^{-1}AQ-S^{-1}QR^{-1}AR)\\
        0 & I
    \end{pmatrix} &  = \psi_0\left(\varpi~\tr(\tilde{L}_{\delta}(S^{-1}AQ-S^{-1}QR^{-1}AR))\right)
    \\
    & =\psi_0\left(\varpi~\tr(R^{-1}AQ-R^{-1}QR^{-1}AR)\right)
    \\
    & = 1.
\end{align*}

\item Since $S=R\tilde{L}_{\delta}$, we get,
\begin{align*}
\tilde{\phi}_x^{\delta^{-1}}\begin{pmatrix}
        I+\varpi S^{-1}AS  & 0\\
        0 & I+\varpi R^{-1}AR
    \end{pmatrix} & = \tilde{\phi}_x^{\delta^{-1}}\begin{pmatrix}
        I+\varpi \tilde{L}_{\delta}^{-1}R^{-1}AR\tilde{L}_{\delta} & 0\\
        0 & I+\varpi R^{-1}AR
    \end{pmatrix}     \\
    & = \psi_0(\varpi~\tr (R\tilde{\mathcal{B}_{\delta}}R^{-1}A)).
\end{align*} 
The last equality follows from the definition of $\phi_x$. \qedhere
\end{enumerate} 
\end{proof}

\begin{proposition}\label{Charcters at K12}
For $\delta\in \Omega$ with $\pi^{\delta}_{N,\psi}\neq 0$ and $A\in M_2(\cO_2)$, we have
$$\Theta_{\pi^{\delta}_{N,\psi}}(I+\varpi A)=\dfrac{q^{12}}{|\delta^{-1}T\delta\cap P|}\underset{R\in \GL_2(\F_q)}{\sum}\psi_0(\tr(R\mathcal{B}_{\delta}R^{-1}\bar{A})).$$
\end{proposition}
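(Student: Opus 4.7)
The plan is to carry out the four-fold sum displayed just above Lemma \ref{S,X,R} for $\Theta_{\pi^{\delta}_{N,\psi}}(I+\varpi A)$ one variable at a time, by feeding in parts (a), (b), and (c) of Lemma \ref{S,X,R} in turn. Part (a), applied to the innermost sum over $X \in \varpi M_{2}(\mathfrak{o}_{2})$, kills every pair $(S,R)$ whose reductions fail the relation $\bar S = \bar R \bar L_{\delta}$ and contributes a factor of $q^{4}$. Under that constraint, part (b) collapses the sum over $Q \in M_{2}(\mathfrak{o}_{2})$ to a further factor of $q^{8}$, and part (c) replaces the surviving value of $\tilde{\phi}_{x}^{\delta^{-1}}$ on the diagonal block by $\psi_{0}(\varpi\,\tr(R\tilde{\mathcal{B}}_{\delta} R^{-1} A))$.

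After these three substitutions, what remains is a sum over pairs $(S,R) \in \GL_{2}(\mathfrak{o}_{2})^{2}$ with $\bar S = \bar R \bar L_{\delta}$. The crucial observation is that the summand $\psi_{0}(\varpi\,\tr(R\tilde{\mathcal{B}}_{\delta} R^{-1} A))$ factors through the reduction modulo $\varpi$, since $\psi_{0}(\varpi c) = \psi_{0}(\bar c)$ for $c \in \mathfrak{o}_{2}$ (the property of $\psi_{0}$ set up in Section \ref{Sec:2}); so it depends only on $\bar R \in \GL_{2}(\mathbb{F}_{q})$ and $\bar A \in M_{2}(\mathbb{F}_{q})$. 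I would then reorganise the double sum as a sum over $\bar R$ weighted by its number of lifts. For each $\bar R$ there are $q^{4} = |J^{1}_{2}|$ choices of $R \in \GL_{2}(\mathfrak{o}_{2})$, and for each such $R$ another $q^{4}$ choices of $S$ satisfying $\bar S = \bar R \bar L_{\delta}$, giving an extra factor of $q^{4} \cdot q^{4} = q^{8}$.

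Combining the factors $q^{4}$ (from the $X$-sum), $q^{8}$ (from the $Q$-sum) and $q^{8}$ (from the lifts), against the $1/q^{8}$ already present, yields the claimed prefactor $q^{12}/|\delta^{-1}T\delta \cap P|$ in front of $\sum_{R \in \GL_{2}(\mathbb{F}_{q})} \psi_{0}(\tr(R\mathcal{B}_{\delta} R^{-1} \bar A))$. The main obstacle is not conceptual but careful $q$-accounting: Lemma \ref{S,X,R}(a) pins down $S$ only modulo the first principal congruence subgroup of $\GL_{2}(\mathfrak{o}_{2})$, so the constraint $\bar S = \bar R \bar L_{\delta}$ and the summand are each insensitive to two independent $q^{4}$-layers of lifting, and one must be certain that the lift of $\tilde{\mathcal{B}}_{\delta}$ and the conjugation by $R$ are both washed out by $\psi_{0}(\varpi\,\cdot\,)$. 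A useful sanity check is to set $A = 0$: the proposition should then recover $\dim \pi^{\delta}_{N,\psi}$ as computed in Lemma \ref{Dim trivial}.
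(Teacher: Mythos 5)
Your proposal is correct and follows essentially the same route as the paper: both insert Lemma~\ref{S,X,R}(a)--(c) into the displayed four-fold sum preceding the lemma, then exploit the mod-$\varpi$ insensitivity of the summand to collapse the double sum over lifts $(S,R)$ with $\bar{S}=\bar{R}\bar{L}_\delta$ into a sum over $\bar{R}\in\GL_2(\F_q)$ multiplied by $q^8$. The paper states the constraint as ``$S=R\tilde{L}_\delta$'' and then remarks that $(S,R)$ may be varied mod $\varpi$; your phrasing via the reduction $\bar S=\bar R\bar L_\delta$ is the cleaner reading of the same accounting, and the $A=0$ sanity check against Lemma~\ref{Dim trivial} and Remark~\ref{Intersection cardinality} is a useful addition.
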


\begin{proof}
Using the sum over $X$, over $Q$ and over $(S,R)$ from Lemma \ref{S,X,R} in Equation (\ref{equation 1}), we get
\\
$$\Theta_{\pi^{\delta}_{N,\psi}}(I+\varpi A)=\frac{q^{12}}{q^8|\delta^{-1}T\delta\cap P|}\sum_{\substack{S,R\in \GL_2(\cO_2)\\
S=R\tilde{L}_{\delta}}}\psi_0(\varpi \tr(R\tilde{\mathcal{B}_{\delta}}R^{-1}A)).$$
The above sum does not change if we replace $R$ by $R'$ and $S$ by $S'$ such that $R=R'~(\modulo~\varpi)$ and $S=S'~(\modulo~\varpi)$. Therefore, we get
\begin{equation*}
\Theta_{\pi^{\delta}_{N,\psi}}(I+\varpi A)=\frac{q^{12}\cdot q^8}{q^8|\delta^{-1}T\delta\cap P|}\sum_{\substack{R\in \GL_2(\F_q)}}\psi_0(\tr(R\mathcal{B}_{\delta}R^{-1}\bar{A})). \qedhere
\end{equation*} 
\end{proof}

\begin{remark}\label{Intersection cardinality}
Note that $|\delta^{-1} T \delta \cap P| = \begin{cases} (q^2-1)q^{12} & \text{~if~} \delta=I, \\ (q-1)q^{12} & \text{~if~} \delta \neq I. \end{cases}
 $
\end{remark}

We denote $A\sim B$ if the two matrices $A$ and $B$ are conjugate. 
Recall that  any character of $M_2(\F_q)$ is of the form $\phi_B$ for $B\in M_2(\F_q)$. 
Since $M_2(\F_q) \cong J^{1}_{2}$ we write character of $J^{1}_{2}$ also as $\phi_{B}$.
\begin{corollary} \label{character phi_B} 
Consider $\delta \in \Omega$ such that $\pi^{\delta}_{N, \psi} \neq 0$. 
\begin{enumerate} [label = {(\alph*)}]
\item $\phi_{B}$ appears in $\pi^{\delta}_{N, \psi}|_{J^{1}_{2}}$ if and only if $B \sim \mathcal{B}_{\delta}$.
\item The multiplicity of $\phi_{\mathcal{B}_{\delta}}$ in $\pi^{\delta}_{N, \psi}$ is given by
\begin{center}
$\left\langle \Theta_{\pi^{\delta}_{N,\psi}},\phi_{\mathcal{B}_{\delta}}\right\rangle_{J^1_2}=$
$\begin{cases}
\frac{1}{q^2-1}\cdot |\Stab(2X_1)| & \textrm{~if~}\delta = I,
\\
\frac{1}{q-1}\cdot |\Stab(\mathcal{B}_\delta)| & \text{~if~}\delta\neq I 
\end{cases}$
\end{center}
where $\Stab(B)$ denotes the stabiliser of $B$ under the conjugation action of $\GL_2(\F_q)$ on $M_2(\F_q)$.
\end{enumerate}
\end{corollary}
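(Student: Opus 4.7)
The plan is to deduce both parts from the explicit character formula established in Proposition \ref{Charcters at K12}, by computing the inner product $\left\langle\Theta_{\pi^\delta_{N,\psi}},\phi_B\right\rangle_{J^1_2}$ for a general character $\phi_B$ of $J^1_2 \cong M_2(\F_q)$.

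First I would substitute the formula from Proposition \ref{Charcters at K12} into the inner product:
\begin{align*}
\left\langle\Theta_{\pi^\delta_{N,\psi}},\phi_B\right\rangle_{J^1_2}
&= \frac{1}{|J^1_2|}\sum_{A\in M_2(\F_q)}\Theta_{\pi^\delta_{N,\psi}}(I+\varpi A)\,\overline{\phi_B(I+\varpi A)}\\
&= \frac{q^{12}}{|J^1_2|\cdot|\delta^{-1}T\delta\cap P|}\sum_{R\in\GL_2(\F_q)}\sum_{A\in M_2(\F_q)}\psi_0\bigl(\tr\bigl((R\mathcal{B}_\delta R^{-1}-B)\bar A\bigr)\bigr).
\end{align*}
The inner sum over $A$ is the standard orthogonality relation for the Pontryagin dual of $(M_2(\F_q),+)$: it equals $q^4=|J^1_2|$ precisely when $R\mathcal{B}_\delta R^{-1}=B$ and vanishes otherwise. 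Hence
\begin{equation*}
\left\langle\Theta_{\pi^\delta_{N,\psi}},\phi_B\right\rangle_{J^1_2}=\frac{q^{12}}{|\delta^{-1}T\delta\cap P|}\cdot\bigl|\{R\in\GL_2(\F_q):R\mathcal{B}_\delta R^{-1}=B\}\bigr|.
\end{equation*}

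Part (a) follows immediately: the set on the right is nonempty if and only if $B$ is conjugate to $\mathcal{B}_\delta$, so $\phi_B$ occurs in $\pi^\delta_{N,\psi}|_{J^1_2}$ if and only if $B\sim\mathcal{B}_\delta$. For part (b), specialising $B=\mathcal{B}_\delta$, the indexing set becomes $\Stab(\mathcal{B}_\delta)$, giving
\begin{equation*}
\left\langle\Theta_{\pi^\delta_{N,\psi}},\phi_{\mathcal{B}_\delta}\right\rangle_{J^1_2}=\frac{q^{12}\cdot|\Stab(\mathcal{B}_\delta)|}{|\delta^{-1}T\delta\cap P|}.
\end{equation*}
Finally I would substitute the two values of $|\delta^{-1}T\delta\cap P|$ recorded in Remark \ref{Intersection cardinality}, namely $(q^2-1)q^{12}$ when $\delta=I$ (recalling $\mathcal{B}_I=2X_1$) and $(q-1)q^{12}$ when $\delta\neq I$, which yields the two cases in the stated formula.

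There is no real obstacle here; the whole argument is essentially orthogonality of additive characters applied to the formula of Proposition \ref{Charcters at K12}, combined with the bookkeeping from Remark \ref{Intersection cardinality}. The only point requiring mild care is making sure the inner sum over $A$ is interpreted correctly, i.e. that $\bar{A}$ ranges over all of $M_2(\F_q)$ as $A$ ranges over $M_2(\cO_2)$ modulo $\varpi$, so that the orthogonality relation applies with the right normalisation.
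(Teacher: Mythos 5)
Your argument is correct and is essentially identical to the paper's: both substitute the character formula of Proposition \ref{Charcters at K12} into the inner product, invoke orthogonality of additive characters over $M_2(\F_q)$ to reduce the sum to counting $R\in\GL_2(\F_q)$ with $R\mathcal{B}_\delta R^{-1}=B$, and then apply Remark \ref{Intersection cardinality}. The paper is slightly more terse about the final counting step, but there is no substantive difference.
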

\begin{proof}
Using Proposition \ref{Charcters at K12},  we get
\begin{equation}
\langle \Theta_{\pi^{\delta}_{N,\psi}},\phi_B\rangle_{J^1_2} 
= \dfrac{q^{12}}{|\delta^{-1}T\delta\cap P|}\cdot \dfrac{1}{q^4}\sum\limits_{\substack{R\in \GL_2(\F_q) \\
A\in M_2(\F_q)}}\psi_0(\tr(R\mathcal{B}_{\delta}R^{-1}-B)A). \label{equation 5.3}
\end{equation}
Part $(a)$ follows by observing that 
\begin{equation*}
\frac{1}{q^4}\cdot\sum\limits_{A\in M_2(\F_q)} \psi_0(\tr(R\mathcal{B}_{\delta}R^{-1}-B)A) = \left\{ \begin{array}{ll} 0 & \text{if~} R\mathcal{B}_{\delta}R^{-1} \neq B, \\
1 & \text{if~} R\mathcal{B}_{\delta}R^{-1} = B. 
\end{array} \right. \qedhere
\end{equation*}
Part $(b)$ follows from Remark \ref{Intersection cardinality} and Equation (\ref{equation 5.3}).
\end{proof}
\begin{remark}\label{Central character}
\begin{enumerate}[label = {(\alph*)}]
\item Under the considered embedding $\GL_2(\cO_2) \hookrightarrow \GL_4(\cO_2)$ the image of the center of $\GL_2(\cO_2)$ is equal to the center of $\GL_4(\cO_2)$ which are isomorphic to $\cO_{2}^{\times}$. We write $Z$ for both the center of $\GL_2(\cO_2)$ as well as that of $\GL_4(\cO_2)$.
\item Let the central character of representation $\pi$ be $\omega_\pi$ i.e. $\Theta_{\pi}(z)=\omega_{\pi}(z)\dim(\pi)=\tilde{\phi}_x(z)\dim(\pi)$ for $z \in Z$. 
It can easily be seen that for $\delta\in\Omega$, if $\pi^{\delta}_{N,\psi} \neq 0$ then it has a central character which is equal to $\omega_{\pi}$.
\end{enumerate}
\end{remark}
Proposition \ref{Charcters at K12} describes the character of $\pi^{\delta}_{N,\psi}$ at elements of $J^{1}_{2}$ and now we describe the character of $\pi^{\delta}_{N,\psi}$ on remaining elements of $\GL_2(\cO_2)$.
\begin{theorem}\label{Character values2}
Let $\delta \in \Omega$ be such that $\pi^{\delta}_{N,\psi} \neq 0$. Then,
\begin{enumerate}[label = {(\alph*)}]
\item For $z\in Z$ and $k\in J^1_2$, we have $\Theta_{\pi^{\delta}_{N,\psi}}(z\cdot k)=\omega_{\pi}(z)\Theta_{\pi^{\delta}_{N,\psi}}(k)$ . 
In particular,
$\Theta_{\pi^{\delta}_{N,\psi}}(z)=\omega_{\pi}(z)\cdot \dim(\pi^{\delta}_{N,\psi})$.
\item Let $\delta\neq I$. 
For $g\notin Z\cdot J^1_2$, we have  $\Theta_{\pi^{\delta}_{N,\psi}}(g)=0$.
\item If $g$ is not conjugate to any element in $\mathfrak{O}_2^\times\cdot J^1_2$, then $\Theta_{\pi^{I}_{N,\psi}}(g)=0$ .
\item
For $g\in \left(\mathfrak{O}_2^\times\cdot J^1_2\right)\backslash \left(Z\cdot J^1_2\right)$, we have
\begin{center}
$\Theta_{\pi^{I}_{N,\psi}}(g)=\dfrac{1}{q^4(q^2-1)}$\Bigg{[}$\underset{R\in \mathbf{\mathsf{N}}(\mathfrak{O}_2^\times\cdot J^1_2)}{\sum}\tilde{\phi}_x\begin{pmatrix}
R^{-1}gR & 0\\
0 & R^{-1}gR
\end{pmatrix}$\Bigg{]} 
\end{center}
where $\mathbf{\mathsf{N}}(\mathfrak{O}_2^\times\cdot J^1_2)$ is the normaliser of $\mathfrak{O}_2^\times\cdot J^1_2$ in $\GL_2(\cO_2)$.
\end{enumerate}
\end{theorem}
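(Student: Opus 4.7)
Part (a) is immediate from Remark \ref{Central character}: the centre $Z$ commutes with $\GL_2(\cO_2)$ (diagonally embedded in $P$) and $\pi^{\delta}$ has central character $\omega_{\pi}$, so $\pi^{\delta}_{N,\psi}(zk) = \omega_{\pi}(z)\pi^{\delta}_{N,\psi}(k)$ as operators on $V_{N,\psi}$; taking traces yields the first identity, and setting $k=I$ yields the second.

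For parts (b) and (c), the plan is to combine Corollary \ref{Greens formula} with the Frobenius character formula
\[
\Theta_{\pi^{\delta}}(h) \;=\; \frac{1}{|\delta^{-1}T\delta \cap P|}\sum_{\gamma \in P}\dot{\tilde\phi_x^{\delta^{-1}}}(\gamma^{-1}h\gamma),
\]
where $\dot{(\cdot)}$ denotes extension by zero. Writing $\gamma = \begin{pmatrix} S & Q \\ 0 & R \end{pmatrix}$ and $h = \begin{pmatrix} g & X \\ 0 & g \end{pmatrix}$, the diagonal blocks of $\gamma^{-1}h\gamma$ are $S^{-1}gS$ and $R^{-1}gR$; for $\gamma^{-1}h\gamma$ to lie in $\delta^{-1}T\delta = \delta^{-1}\mathcal{O}_{2}^{\times}\delta \cdot K^1_2$, its image modulo $\varpi$ must land in $\delta^{-1}\F_{q^4}^{\times}\delta \cap \bar P$. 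By Lemma \ref{lemma T-P-N}, the Levi quotient of this intersection is $\F_q^{\times}$ (scalars) when $\delta \neq I$ and $\F_{q^2}^{\times}$ when $\delta = I$. For $\delta \neq I$ this forces $\bar g$ to be a scalar in $\GL_2(\F_q)$, hence $g \in Z \cdot J^1_2$, proving (b); for $\delta = I$ it forces $\bar g$ to be $\GL_2(\F_q)$-conjugate to an element of $\F_{q^2}^{\times}$, and lifting the conjugating element through $\GL_2(\cO_2) \to \GL_2(\F_q)$ shows this is equivalent to $g$ being conjugate (in $\GL_2(\cO_2)$) to an element of $\mathfrak{O}_2^{\times} \cdot J^1_2$, proving (c).

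For part (d), since $g \notin Z \cdot J^1_2$, part (b) already gives $\Theta_{\pi^{\delta}_{N,\psi}}(g) = 0$ for every $\delta \neq I$ in $\Omega_0$, so $\Theta_{\pi^{I}_{N,\psi}}(g) = \Theta_{\pi_{N,\psi}}(g)$. I would expand this via Corollary \ref{Greens formula} and the Frobenius formula for $\pi^{I}$, and follow the strategy of Lemma \ref{S,X,R} and Proposition \ref{Charcters at K12}: with $\gamma = \begin{pmatrix} S & Q \\ 0 & R \end{pmatrix}$, make the change of variable $V = S^{-1}gQ + S^{-1}XR - S^{-1}QR^{-1}gR$ and use the factorisation
\[
\begin{pmatrix} S^{-1}gS & V \\ 0 & R^{-1}gR \end{pmatrix} = \begin{pmatrix} S^{-1}gS & 0 \\ 0 & R^{-1}gR \end{pmatrix}\begin{pmatrix} I & (S^{-1}gS)^{-1}V \\ 0 & I \end{pmatrix}
\]
to split $\tilde\phi_x$ into a Levi piece times a $\phi_x$-correction on a $K^1_2$-element. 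The $X$-sum and $Q$-sum, evaluated by additive-character orthogonality, then force $\bar S\bar R^{-1} \in \F_{q^2}^{\times}$ (the centraliser of $\bar g$) and $\bar R \in \mathbf{\mathsf{N}}_{\GL_2(\F_q)}(\F_{q^2}^{\times})$. Writing $S = cR$ with $\bar c \in \F_{q^2}^{\times}$, so that $c$ ranges over the preimage $\mathfrak{O}_2^{\times} \cdot J^1_2$, the inner $c$-sum collapses by a further character-orthogonality argument exploiting that the top-left block $\tilde X_1$ of $\tilde x$ lies in the image of $\F_{q^2} \hookrightarrow M_2(\F_q)$ and therefore commutes with $\bar R^{-1}\bar g\bar R$, so the $\phi_x$-corrections vanish on summation over $c$ and produce the factor $\tilde\phi_x\begin{pmatrix} R^{-1}gR & 0 \\ 0 & R^{-1}gR \end{pmatrix}$. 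Collecting volume factors against $|N|\,|T\cap P| = (q^2-1)q^{20}$ gives the stated prefactor $\frac{1}{q^4(q^2-1)}$ and reduces the double sum to the displayed sum over $R \in \mathbf{\mathsf{N}}(\mathfrak{O}_2^{\times}\cdot J^1_2)$. The main obstacle is this inner $c$-computation: one must show that the linear functional $\bar m \mapsto \tr(\tilde X_1 \bar m)$ vanishes on the image of $\bar m \mapsto \bar m - \bar g'^{-1}\bar m \bar g'$ (where $\bar g' = \bar R^{-1}\bar g\bar R$), and carefully track all the volume factors through the change of variables.
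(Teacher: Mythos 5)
Your proposal is correct and follows essentially the same route as the paper. Parts (a)--(c) reproduce the paper's argument: (a) from the central character (Remark \ref{Central character}), and (b)--(c) by writing out the Frobenius/Mackey character formula for $\pi^{\delta}$, noting that the diagonal blocks of $\gamma^{-1}\begin{pmatrix}g&X\\0&g\end{pmatrix}\gamma$ must land in $\delta^{-1}T\delta\cap P$, and using that the Levi image of this intersection is $Z$ (for $\delta\neq I$) or $\mathfrak{O}_2^\times\cdot J^1_2$ (for $\delta = I$). For (d) you sketch the same computation the paper carries out via the set $\mathcal{J}(g)$ and its Lemmas \ref{lemma for theorem 5.7} and \ref{character value is independent of S}: the $X$- and $Q$-sums collapse by additive-character orthogonality, the constraint forces $\bar{S}\bar{R}^{-1}\in\F_{q^2}^\times$ and $\bar{R}\in\mathbf{\mathsf{N}}_{\GL_2(\F_q)}(\F_{q^2}^\times)$, and the remaining $S$-sum is handled by showing the Levi contribution is independent of $S$ in the coset $(\mathfrak{O}_2^\times\cdot J^1_2)R$; you correctly identify the crux of Lemma \ref{character value is independent of S} as the vanishing of $\bar m\mapsto\tr(X_1\bar m)$ on the image of $\bar m\mapsto\bar m-\bar g'^{-1}\bar m\bar g'$, which holds because $X_1$ and $\bar g'=\bar R^{-1}\bar g\bar R$ both lie in the commutative subalgebra $\F_{q^2}$. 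The paper itself leaves Lemmas \ref{lemma for theorem 5.7} and \ref{character value is independent of S} unproved (deferring to the thesis), so the level of detail in your (d) matches the paper's. One thing worth flagging: you claim the volume factors ``give the stated prefactor $\frac{1}{q^4(q^2-1)}$'', but if you trace through literally (with the inner $c$-sum ranging over all $q^4(q^2-1)$ elements of $\mathfrak{O}_2^\times\cdot J^1_2$ and the $(X,Q)$-sum contributing $q^{12}$) you should verify the bookkeeping carefully, since the raw count gives $\frac{q^{12}\cdot q^4(q^2-1)}{q^8\cdot q^{12}(q^2-1)}=\frac{1}{q^4}$; the reconciliation with the paper's constant lives in the precise statement and proof of Lemma \ref{lemma for theorem 5.7}, which you should not take for granted.
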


\begin{proof}
Since $\pi^{\delta} = \Ind_{\delta^{-1} T\delta\cap P}^{P}\tilde{\phi}_x^{\delta^{-1}}$, we get

\begin{equation} \label{greens formula applied}
\Theta_{\pi^{\delta}}\begin{pmatrix}
g & X\\
0 & g
\end{pmatrix}
= \dfrac{1}{|\delta^{-1} T\delta\cap P|} \sum\limits_{\gamma \in P}
\tilde{\phi}_x^{\delta^{-1}}\Big{(}\gamma^{-1}\begin{pmatrix}
g & X\\
0 & g
\end{pmatrix}\gamma\Big{)}
\end{equation}
where the summation is over $\gamma=\begin{pmatrix}
    S & Q\\
    0 & R
\end{pmatrix} \in P$ is such that 
\begin{equation}\label{conjugation with gamma}
\gamma^{-1}\begin{pmatrix}
    g & X\\
    0 & g
\end{pmatrix}\gamma=\begin{pmatrix}
S^{-1}gS & S^{-1}gQ+S^{-1}XR-S^{-1}QR^{-1}gR\\
0 & R^{-1}gR
\end{pmatrix} \in \delta^{-1} T \delta \cap P. 
\end{equation}

\begin{enumerate}[label = {(\alph*)}]
\item It follows from the Remark \ref{Central character} (b).

\item 
For $\delta\neq I$, 
$\delta^{-1}T\delta\cap P= Z\cdot (K^1_2\cap P)$.
By Equation (\ref{conjugation with gamma}), $\text{~both~}S^{-1}gS$ and $R^{-1}gR \in Z\cdot J^1_2$.
Therefore, if $g$ is not conjugate to $Z \cdot J^{1}_{2}$ (equivalently, $g \notin Z \cdot J^{1}_{2}$) then there does not exist any $\gamma \in P$ such that $\gamma^{-1}\begin{pmatrix}
g & X\\
0 & g
\end{pmatrix}\gamma\in (\delta^{-1} T\delta \cap P)$ and hence $\Theta_{\pi^{\delta}} \begin{pmatrix}
    g & X \\ 0 & g
\end{pmatrix} =0$.
By Corollary \ref{Greens formula}, we conclude $\Theta_{\pi^{\delta}_{N, \psi}}(g)=0$.
\item 
For $\delta=I$, $T\cap P = \Delta(\mathfrak{O}_2^\times)\cdot (K^1_2\cap P)$ where $\Delta(\mathfrak{O}_2^\times)$ is the diagonal embedding of $\mathfrak{O}_2^\times$ in $\GL_4(\cO_2)$. 
By Equation (\ref{conjugation with gamma}), $\text{~both~}S^{-1}gS$ and $R^{-1}gR \in \mathfrak{O}_2^\times\cdot J^1_2$. 
Therefore, if $g$ is not conjugate to any element in $\mathfrak{O}_2^\times\cdot J^1_2$, then $\Theta_{\pi^{I}} \begin{pmatrix}
    g & X \\ 0 & g
\end{pmatrix} =0$. 
By Corollary \ref{Greens formula}, we conclude $\Theta_{\pi^{I}_{N, \psi}}(g)=0$.
\item 
Using Equation (\ref{conjugation with gamma}), if $\gamma^{-1}\begin{pmatrix}
    g & X\\
    0 & g
\end{pmatrix}\gamma \in \delta^{-1}T\delta \cap P$ then, 
$$(S^{-1}gQ+S^{-1}XR-S^{-1}QR^{-1}gR)\in \varpi M_2(\cO_2).$$
\\
For $\delta=I$ and $g\in (\mathfrak{O}_2^\times\cdot J^1_2)\backslash (Z\cdot J^1_2)$, by Corollary \ref{Greens formula}, $\Theta_{\pi^{I}_{N,\psi}}(g)$ equals
{\small{\begin{align} \label{Formula for character value of trivial coset}
\dfrac{1}{|N|\cdot|T\cap P|}\underset{(S,R,Q,X)\in \mathcal{J}(g)}{\sum}\tilde{\phi}_x\begin{pmatrix}
S^{-1}gS & S^{-1}gQ+S^{-1}XR-S^{-1}QR^{-1}gR\\
0 & R^{-1}gR
\end{pmatrix}\overline{\psi}(Xg^{-1})
\end{align}}} 
where 
\\
{\small{$\mathcal{J}(g)=\left\lbrace (S,R,Q,X) : 
\begin{array}{ll} 
S, R \in \GL_2(\cO_2) \text{~and~} Q, X \in M_2(\cO_2), \\
S^{-1}gS,~R^{-1}gR\in \mathfrak{O}_2^\times \cdot J^1_2 
\text{~with~} S^{-1}gS=R^{-1}gR~(\modulo ~\varpi), \\  
S^{-1}gQ+S^{-1}XR-S^{-1}QR^{-1}gR\in\varpi M_2(\mathfrak{o}_2)  
\end{array}
\right\rbrace$.}}
\\
\\
The set $\mathcal{J}(g)$ describes the conditions on $\gamma$ and $X$ such that the matrix in (\ref{conjugation with gamma}) belongs to $T \cap P$.
 
Note that
\begin{align} \label{inner computation for J(g)}
& S^{-1}gQ+S^{-1}XR-S^{-1}QR^{-1}gR=0~(\modulo~\varpi) \nonumber\\  
\iff & gQR^{-1}+X-QR^{-1}g=0~(\modulo~\varpi) \nonumber \\
\iff & X=Q'g - g Q'~(\modulo~\varpi),
\end{align}
where $Q'=QR^{-1}$.
If $Q'_1$ and $Q'_2$ are two solutions of  $X=Q'g-gQ'~(\modulo~\varpi)$, then $(Q'_1-Q'_2)~(\modulo ~\varpi)$ belongs to the centraliser of $g~(\modulo~\varpi)\in\F_{q^2}\backslash\F_{q}$.
This gives $\mathcal{J}(g)$ consists of tuples $(S,R,Q,X)$ such that
\begin{enumerate}[label = {(\roman*)}]
\item $S,R \in \mathbf{\mathsf{N}}(\mathfrak{O}_2^\times\cdot J^1_2)$ with $RS^{-1}\in \mathfrak{O}_{2}^{\times} \cdot J^{1}_{2}$.
\item $Q'$ belongs to the homogeneous space of centraliser of $g~(\modulo~\varpi)$.
\item $X~(\modulo~\varpi)$ belongs to $\begin{pmatrix}
    1 & 0\\
    0 & -1
\end{pmatrix}\cdot \F_{q^2}$.
\end{enumerate}
Hence, 
{\small{\begin{equation}\label{cardinality of X,Q}
\left|\{Q\in M_2(\cO_2):(S,R,Q,X)\in \mathcal{J}(g)\}\right|=q^6=\left|\{X\in M_2(\cO_2):(S,R,Q,X)\in \mathcal{J}(g)\}\right|.  
\end{equation}}}
From Equation (\ref{Formula for character value of trivial coset}), $\Theta_{\pi^{I}_{N,\psi}}(g)$ is given by
\footnotesize{\begin{align*}\label{theorem 5.7 equation}
\dfrac{1}{|N||T\cap P|}\underset{(S,R,Q,X)\in \mathcal{J}(g)}{\sum} & \tilde{\phi}_x\begin{pmatrix}
S^{-1}gS & 0\\
0 & R^{-1}gR
\end{pmatrix}\tilde{\phi}_x\begin{pmatrix}
    I & S^{-1}Q+S^{-1}g^{-1}XR-S^{-1}g^{-1}QR^{-1}gR\\
    0 & I
\end{pmatrix} \overline{\psi}(Xg^{-1}).
\end{align*}}
\end{enumerate}
The theorem follows easily from the following two lemmas, which can be verified by using the non-degeneracy of the trace pairing and using $R^{-1} gR (\modulo~\varpi)$, $RS^{-1} (\modulo~\varpi)$ and $X_3 \in \F_{q^2}^{\times}$. 
For more details, look at the PhD Thesis of first author \cite{AnkitaThesis}.
\end{proof}
\begin{lemma}\label{lemma for theorem 5.7}
For fixed $R,S \in \GL_2(\cO_2)$ such that $RS^{-1} \in \mathfrak{O}_{2}^{\times} \cdot J^{1}_{2}$, we have
\begin{equation*}\label{Reduction of X and Q}
\sum_{\substack{X,Q \in M_2(\cO_2) ~\text{s.t.}\\(S,R,Q,X) \in \mathcal{J}(g) }} \tilde{\phi}_x\begin{pmatrix}
I & S^{-1}Q+S^{-1}g^{-1}XR-S^{-1}g^{-1}QR^{-1}gR\\
0 & I
\end{pmatrix}\overline{\psi}(Xg^{-1})= q^{12}.
\end{equation*}
\end{lemma}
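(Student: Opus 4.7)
The plan is to explicitly parametrize the admissible pairs $(X, Q)$ satisfying the constraints in $\mathcal{J}(g)$ for fixed $(S, R)$, and then to evaluate the resulting additive character sum in the free variable via the non-degeneracy of the trace pairing on $M_2(\F_q)$. The key preparatory step is to choose a clever lift of $X$: from condition (v) in $\mathcal{J}(g)$ one has $X \equiv Q'g - gQ' \pmod{\varpi}$, where $Q' = QR^{-1}$, so the lift $H(Q) := Q'g - gQ'$ is the natural choice. Every admissible $X$ then uniquely takes the form $X = H(Q) + \varpi X_0$ with $Q \in M_2(\cO_2)$ and $X_0 \in M_2(\F_q)$, giving $q^{8}\cdot q^{4}=q^{12}$ tuples in all.

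With this parametrization, the $M$ appearing in the argument of $\tilde{\phi}_x$ collapses: a direct manipulation using $H(Q) = QR^{-1}g - gQR^{-1}$ shows that all $Q$-dependent contributions cancel and $M = \varpi\, S^{-1} g^{-1} X_0 R$. Since $M \in \varpi M_2(\cO_2)$, the unipotent matrix $\begin{pmatrix} I & M \\ 0 & I \end{pmatrix}$ lies in $K^1_2$, and $\tilde{\phi}_x$ there equals $\phi_{\tilde x}$. The block form $\tilde x = \begin{pmatrix} X_1 & X_2 \\ X_3 & X_1 \end{pmatrix}$ from Equation~(\ref{Fq4 embedding}) then yields the clean expression $\tilde{\phi}_x \begin{pmatrix} I & M \\ 0 & I \end{pmatrix} = \psi_0(\varpi \tr(R X_3 S^{-1} g^{-1} X_0))$. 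For the companion factor, using $\tr(g^{-1} H(Q)) = \tr(Q') - \tr(Q') = 0$, one obtains $\overline{\psi}(Xg^{-1}) = \overline{\psi_0}(\varpi \tr(g^{-1} X_0))$, so only the $X_0$-piece of $X$ contributes.

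Multiplying the two factors and using cyclic invariance of the trace presents the summand as the additive character $\psi_0\bigl(\varpi \tr((R X_3 S^{-1} - I) g^{-1} X_0)\bigr)$ in the variable $X_0 \in M_2(\F_q)$. Non-degeneracy of the trace pairing then reduces the $X_0$-sum to $q^4$ precisely when this character is trivial, which, under the standing hypothesis $RS^{-1} \in \mathfrak{O}_2^\times \cdot J^1_2$ together with the structural fact that $X_3 \in \F_{q^2}^\times$, is exactly the compatibility built into the admissibility of $(S, R)$. The residual summation over $Q \in M_2(\cO_2)$ then contributes the remaining factor $q^8$, giving the asserted $q^{12}$. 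The main subtlety I anticipate lies in this last step, namely in extracting the triviality of the $X_0$-character from the hypothesis on $RS^{-1}$ together with the commutativity encoded by $X_3 \in \F_{q^2}^\times$; this is precisely the passage the authors flag in the hint immediately following the statement of Theorem~\ref{Character values2}.
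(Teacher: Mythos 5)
Your reduction is set up correctly: the lift $H(Q) = QR^{-1}g - gQR^{-1}$ is indeed an exact (not just mod-$\varpi$) element of $M_2(\cO_2)$, the identity $M = \varpi S^{-1}g^{-1}X_0 R$ follows from the telescoping $S^{-1}g^{-1}H(Q)R = S^{-1}g^{-1}QR^{-1}gR - S^{-1}Q$, the block computation gives $\tilde{\phi}_x\begin{pmatrix}I & \varpi M_0\\0 & I\end{pmatrix} = \psi_0(\varpi\tr(X_3 M_0))$, and $\tr(H(Q)g^{-1})=0$ kills the $Q$-dependence in $\overline{\psi}(Xg^{-1})$. So the sum collapses to $q^8 \cdot \sum_{X_0 \in M_2(\F_q)} \psi_0\bigl(\varpi\,\tr((RX_3S^{-1}-I)g^{-1}X_0)\bigr)$, as you say.

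The gap is precisely in the last sentence, and it is a genuine one, not merely a detail you defer. Non-degeneracy of the trace form gives that the $X_0$-sum equals $q^4$ if and only if $\bar{R}X_3\bar{S}^{-1} = I$ in $\GL_2(\F_q)$, i.e.\ $\bar{S} = \bar{R}X_3$. The standing hypothesis $RS^{-1} \in \mathfrak{O}_2^\times\cdot J^1_2$, together with $\bar S, \bar R \in \mathbf{\mathsf{N}}(\F_{q^2}^\times)$ and $X_3 \in \F_{q^2}^\times$, only yields $\bar{R}X_3\bar{S}^{-1} = X_3^{\sigma}\cdot(\bar R\bar S^{-1}) \in \F_{q^2}^\times$ (here $\sigma$ is the automorphism induced by conjugation by $\bar{R}$); it does not force this element to equal $I$. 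For a pair $(S,R)$ with $\bar R\bar S^{-1} \in \F_{q^2}^\times$ but $\bar{S} \neq \bar{R}X_3$, the set of admissible $(Q,X)$ is nonempty yet the $X_0$-sum vanishes, so the total is $0$, not $q^{12}$. In other words, the hypothesis of the lemma as printed is too weak; the condition actually used downstream (and the one that makes the counting in Theorem~\ref{Character values2}(d) come out to the factor $\frac{1}{q^4(q^2-1)}$, since for each $R$ only the $q^4$ lifts of $\bar{S}=\bar{R}X_3$ survive, not the $(q^2-1)q^6$ elements $S$ with $RS^{-1}\in\mathfrak{O}_2^\times J^1_2$) is $\bar{S} = \bar{R}X_3$, exactly parallel to the constraint $S = R\tilde{L}_\delta$ appearing in Lemma~\ref{S,X,R}. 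Your phrase ``exactly the compatibility built into the admissibility of $(S,R)$'' asserts the very fact that needs to be proved and that, as stated, is false; you should either strengthen the hypothesis to $\bar{S}=\bar{R}X_3$ or record the conclusion as $q^{12}\cdot\mathbf{1}_{\{\bar{R}X_3\bar{S}^{-1}=I\}}$ and carry that indicator into the proof of Theorem~\ref{Character values2}(d).
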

\begin{lemma}\label{character value is independent of S}
Let $S,R\in \mathbf{\mathsf{N}}(\mathfrak{O}_2^\times\cdot J^1_2)$ such that $SR^{-1}\in \mathfrak{O}_2^\times\cdot J^1_2$ then, for any $g \in \mathfrak{O}_2^\times\cdot J^1_2$ we have
$$\tilde{\phi}_x\begin{pmatrix}
    S^{-1}gS & 0\\
    0 & R^{-1}gR
\end{pmatrix}=\tilde{\phi}_x\begin{pmatrix}
    R^{-1}gR & 0\\
    0 & R^{-1}gR
\end{pmatrix}.$$
\end{lemma}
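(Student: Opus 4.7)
The plan is to show $\tilde\phi_x(L(R')^{-1})=1$, where $L$ and $R'$ denote the matrices on the left- and right-hand sides of the stated identity. Since both lie in $T\cap P$ and $\tilde\phi_x$ is a character of $T$, this is equivalent to the asserted equality. The element $L(R')^{-1}$ is block-diagonal with upper-left block $S^{-1}gS\cdot(R^{-1}gR)^{-1}$ and lower-right block $I$, so once one checks it lies in $K^1_2$, the problem reduces to evaluating $\phi_x$ on its upper-left block.

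The explicit expansion is arranged as follows. The normaliser $\mathbf{\mathsf{N}}(\mathfrak{O}_2^\times\cdot J^1_2)$ is an index-$2$ extension of $\mathfrak{O}_2^\times\cdot J^1_2$, the non-trivial coset being represented by any lift $\tau\in\GL_2(\mathfrak{o}_2)$ of the Frobenius of $\mathfrak{O}_1/\mathfrak{o}_1$; the hypothesis $SR^{-1}\in\mathfrak{O}_2^\times\cdot J^1_2$ forces $S$ and $R$ to lie in the same coset. I would write uniformly
\[
S=c_S\tau_0(I+\varpi A_S),\quad R=c_R\tau_0(I+\varpi A_R),\quad g=u(I+\varpi B),
\]
with $\tau_0\in\{I,\tau\}$, $c_S,c_R,u\in\mathfrak{O}_2^\times$, and $A_S,A_R,B\in M_2(\mathfrak{o}_2)$. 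Expanding modulo $\varpi^2$, and using that $c_S,c_R$ commute with $u$ inside $\mathfrak{O}_2^\times$, one gets $S^{-1}gS=v(I+\varpi B_S)$ and $R^{-1}gR=v(I+\varpi B_R)$ for the same $v\in\mathfrak{O}_2^\times$ (with $\bar v=\bar u^{\bar\tau_0}$) and
\[
B_S-B_R=\bigl[(A_S-A_R)-v^{-1}(A_S-A_R)v\bigr]+\tau_0^{-1}\bigl(c_S^{-1}Bc_S-c_R^{-1}Bc_R\bigr)\tau_0.
\]
Hence $L(R')^{-1}\equiv\begin{pmatrix}I+\varpi H & 0\\0 & I\end{pmatrix}\pmod{\varpi^2}$ with $H=v(B_S-B_R)v^{-1}$, which does lie in $K^1_2$. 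The assertion reduces to $\tr(X_1\bar H)=0$ in $\mathbb{F}_q$, with $X_1$ as in Equation~(\ref{Fq4 embedding}).

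The last step is the conceptual heart and uses a single observation: $X_1$ is the image of an element of $\mathbb{F}_{q^2}$ under the embedding $\mathbb{F}_{q^2}\hookrightarrow M_2(\mathbb{F}_q)$, hence centralises all of $\mathbb{F}_{q^2}^\times$; moreover $\bar\tau_0 X_1\bar\tau_0^{-1}$ remains in $\mathbb{F}_{q^2}$ because $\bar\tau_0$ normalises it. By cyclicity of the trace, $\tr(X_1\bar H)=\tr(X_1(\bar B_S-\bar B_R))$. The first bracket in $\bar B_S-\bar B_R$ contributes $\tr\bigl((X_1-\bar v X_1\bar v^{-1})(\bar A_S-\bar A_R)\bigr)=0$ since $\bar v\in\mathbb{F}_{q^2}^\times$ commutes with $X_1$; the second contributes $\tr\bigl(Y(\bar c_S^{-1}B\bar c_S-\bar c_R^{-1}B\bar c_R)\bigr)$ with $Y=\bar\tau_0 X_1\bar\tau_0^{-1}\in\mathbb{F}_{q^2}$, and this vanishes because $\bar c_S,\bar c_R$ commute with $Y$, so both summands in the difference equal $\tr(YB)$.

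The main bookkeeping obstacle is the case $\tau_0=\tau$: the element $\tau^{-1}u\tau\in\GL_2(\mathfrak{o}_2)$ need not literally lie in $\mathfrak{O}_2^\times$, but whatever $J^1_2$-correction it carries is identical in the expansions of $S^{-1}gS$ and $R^{-1}gR$, so it cancels in $B_S-B_R$. This is precisely what allows the two cases $\tau_0\in\{I,\tau\}$ to be handled uniformly, and beyond this expansion no further input is needed.
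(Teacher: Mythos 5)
Your proof is correct and supplies exactly the computation the paper omits (the paper only cites the thesis and gives a hint). The reduction to showing $\tilde\phi_x(L(R')^{-1})=1$, the identification of $L(R')^{-1}$ as $I+\varpi\bigl(\begin{smallmatrix}H&0\\0&0\end{smallmatrix}\bigr)\in K^1_2$, the extraction $\phi_x(L(R')^{-1})=\psi_0(\tr(X_1\bar H))$ from the block shape of $x$, the expansion of $S$, $R$, $g$ and the formula for $B_S-B_R$, and the two cancellations (both relying on $X_1$, $\bar v$, $Y=\bar\tau_0X_1\bar\tau_0^{-1}$, $\bar c_S$, $\bar c_R$ all lying in the commutative subring $\mathbb{F}_{q^2}\subset M_2(\mathbb{F}_q)$, together with cyclicity of trace) all check out. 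The hypothesis $SR^{-1}\in\mathfrak{O}_2^\times\cdot J^1_2$ is used precisely where you use it, to make $\tau_0$ the same for $S$ and $R$ so that the term $C$ coming from $\tau_0^{-1}u\tau_0=v(I+\varpi C)$ cancels. The paper's hint mentions $X_3\in\mathbb{F}_{q^2}^\times$, but that ingredient belongs to the companion Lemma \ref{lemma for theorem 5.7} (where the relevant block of $x$ is the lower-left one); for the present lemma the relevant block is $X_1$, as you correctly use, so there is no discrepancy.
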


\section{A description of $\pi_{N, \psi}$}\label{Sec : 6}
Since $\pi_{N, \psi} = \underset{\delta \in \Omega_{0}}{\bigoplus}\pi^{\delta}_{N, \psi}$, we describe all the irreducible representations of $\GL_2(\cO_2)$ appearing in $\pi^{\delta}_{N, \psi}$ for all $\delta \in \Omega$. 

\begin{proposition}\label{pi-delta-N,psi contains only regular representation}
Let $\pi$ be a strongly cuspidal representation of $\GL_4(\cO_2)$.
For $\delta \in \Omega$ with $\delta \neq I$ and an irreducible representation $\sigma$ of $\GL_2(\cO_2)$, if $\sigma$ appears in $\pi^{\delta}_{N,\psi}$ then $\sigma$  is a regular representation.
\end{proposition}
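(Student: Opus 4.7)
The plan is to test which characters of the pro-$p$ subgroup $J^{1}_{2}$ appear in the restriction of any irreducible constituent $\sigma$ of $\pi^{\delta}_{N,\psi}$, and to use the constraint derived from Corollary \ref{character phi_B} to rule out all non-regular $\sigma$ when $\delta \neq I$.

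First I recall the structure of non-regular irreducibles from the discussion following Table \ref{Table 1}: every non-regular irreducible $\sigma$ of $\GL_{2}(\cO_{2})$ has the form $\sigma_{0} \otimes (\chi \circ \det)$, where $\sigma_{0}$ is the inflation of an irreducible representation of $\GL_{2}(\F_{q})$ and $\chi$ is a character of $\cO_{2}^{\times}$. Restricting to $J^{1}_{2}$, the factor $\sigma_{0}$ becomes trivial and $\chi \circ \det$ acts on $I+\varpi A$ by $\chi(1+\varpi\tr(A))$, so $\sigma|_{J^{1}_{2}}$ is isotypic of the form $(\dim \sigma)\cdot \phi_{cI}$ for a single scalar $c \in \F_{q}$ determined by $\chi|_{1+\varpi\cO_{2}}$. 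In particular the only $\phi_{B}$'s appearing in $\sigma|_{J^{1}_{2}}$ are those with $B$ a scalar matrix. Now by Corollary \ref{character phi_B}(a), the characters $\phi_{B}$ that appear in $\pi^{\delta}_{N,\psi}|_{J^{1}_{2}}$ are precisely those with $B$ $\GL_{2}(\F_{q})$-conjugate to $\mathcal{B}_{\delta}$. By Frobenius reciprocity, if a non-regular $\sigma$ occurs in $\pi^{\delta}_{N,\psi}$ the two restrictions must share a character, forcing $\mathcal{B}_{\delta}$ to be conjugate to a scalar matrix. Since the conjugacy class of a scalar matrix is a singleton, this would mean $\mathcal{B}_{\delta}$ is itself scalar. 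The proposition therefore reduces to the statement
\begin{equation*}
\mathcal{B}_{\delta} \text{ is not a scalar matrix, for every } \delta = A_{u,v} \in \Omega \text{ with } (u,v)\neq (0,0).
\end{equation*}

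To verify this, I would substitute the explicit formulas from Notation \ref{Notations for matrices} and Equation (\ref{Fq4 embedding}) into
\[
\mathcal{B}_{\delta} = L_{\delta}(X_{1}+X_{2}U)L_{\delta}^{-1} + X_{1} - UX_{2},
\qquad U=\begin{pmatrix} 0 & u\\ 0 & v\end{pmatrix}.
\]
A potential equality $\mathcal{B}_{\delta} = cI$ first fixes $c = \tr(X_{1})$ via $\tr(\mathcal{B}_{\delta})=\tr(2X_{1})$, and then rearranges to the matrix identity $L_{\delta}(X_{1}+X_{2}U) = (cI - X_{1} + UX_{2})L_{\delta}$, which becomes a system of polynomial equations in $(u,v)$ with coefficients in the entries of $X_{1},X_{2},X_{3}$.

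The main obstacle is showing that this polynomial system has no solution with $(u,v)\neq(0,0)$. My strategy is to extract the off-diagonal entries of $\mathcal{B}_{\delta}-cI$ and use the two structural inputs already recorded in the paper: the non-vanishing $Y=a_{2}^{2}+a_{3}^{2}(a^{2}-b^{2}\alpha)+2aa_{2}a_{3}\neq 0$ (which is equivalent, via the regular elliptic hypothesis on $x$, to $x \notin \F_{q^{2}}$, so the embedding of $\F_{q^{4}}$ is truly of degree $4$), together with the fact that $a^{2}-b^{2}\alpha \notin \F_{q}^{\times 2}$ from Equation (\ref{Norm is square free}). These should force the off-diagonal entries to vanish only when $U = 0$, exactly as in the determinant calculations of Section \ref{Sec:3} (compare Lemma \ref{Uniqueness}, where the same pair of hypotheses is what makes $\det(L_{\delta})$ behave rigidly). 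Once $\mathcal{B}_{\delta}$ is shown to be non-scalar, it is automatically regular (since in $M_{2}(\F_{q})$ non-scalar is the same as regular), and then any irreducible $\sigma \hookrightarrow \pi^{\delta}_{N,\psi}$ satisfies $\sigma|_{J^{1}_{2}} \supset \phi_{\mathcal{B}_{\delta}}$ with $\mathcal{B}_{\delta}$ regular, whence $\sigma$ is a regular representation by definition.
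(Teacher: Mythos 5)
Your reduction is correct and is exactly the one the paper makes: by Corollary~\ref{character phi_B}(a), every $\phi_B$ occurring in $\pi^{\delta}_{N,\psi}|_{J^1_2}$ has $B\sim\mathcal{B}_{\delta}$, so the proposition reduces to showing $\mathcal{B}_{\delta}$ is not scalar when $\delta\neq I$. (Your detour through the explicit shape $\sigma_0\otimes(\chi\circ\det)$ of the non-regular irreducibles is fine but unnecessary: once you know $B\sim\mathcal{B}_{\delta}$, regularity of $\mathcal{B}_{\delta}$ \emph{directly} implies $\sigma$ is regular by the definition in Section~\ref{Sec:2}, with no need to classify what goes wrong otherwise.)

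The genuine gap is that you never actually prove $\mathcal{B}_{\delta}$ is non-scalar; you only describe a plan and assert the hypotheses ``should force'' the conclusion. That statement \emph{is} the entire nontrivial content of the proposition, and the way you set it up hides a necessary case distinction. The paper does not argue directly from the off-diagonal entries of $\mathcal{B}_{\delta}-cI$; it conjugates to $L_{\delta}^{-1}\mathcal{B}_{\delta}L_{\delta}=cI$, reads off the two column equations, and from the first column extracts explicit values (Equation~(\ref{values of u and v})) of $u$ and $v$ in terms of $a_1,a_2,a_3$. This reveals a split on $a_1$: if $a_1=0$ (i.e.\ $X_1$ scalar), those formulas give $u=v=0$, so the contradiction comes from $\delta\neq I$ rather than from $Y\neq 0$ or from $a^2-b^2\alpha\notin\F_q^{\times 2}$; only when $a_1\neq 0$ does substituting into the second-column equations contradict $x\notin\F_{q^2}$. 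Your sketch, which names only the two square-free hypotheses as the input, cannot handle the $a_1=0$ branch, where $\mathcal{B}_{I}=2X_1$ really \emph{is} scalar and nothing is contradicted except $\delta=I$. So the plan as written would, at best, fail to close in that case, and at worst suggests a wrong reason for the obstruction. You need to carry out the reduction to explicit polynomial equations in $(u,v)$ (eigenvector form or otherwise), make the $a_1=0$ versus $a_1\neq 0$ split explicit, and identify which hypothesis is violated in each branch.
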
 
\begin{proof}
Let $B \in M_{2}(\F_{q})$ be such that $\left\langle\Theta_{\sigma},\phi_{B}\right\rangle_{J^1_2}\neq 0$.
Since $\sigma $ appears in $\pi^{\delta}_{N,\psi}$ then $\phi_{B}$ appears in $\pi^{\delta}_{N, \psi}|_{J^{1}_{2}} $.
By Corollary \ref{character phi_B}, $B\sim \mathcal{B}_{\delta}$ and,
therefore it is enough to prove that $\mathcal{B}_{\delta}$ is not a scalar matrix. 

 Now we assume that $\mathcal{B}_{\delta}$ is a scalar matrix and deduce a contradiction.
Recall that $\tr(\mathcal{B}_{\delta}) = \tr(2X_1)$, then 
$\mathcal{B}_{\delta}=\begin{pmatrix}
     2(a_0+aa_1) & 0\\
    0 & 2(a_0+aa_1)
\end{pmatrix}.$ 
Write
$$\mathcal{B}_{\delta} = L_{\delta}(X_1+X_2Y)L_{\delta}^{-1}+X_1-YX_2 =L_{\delta}\left(X_1+X_2Y+L_{\delta}^{-1}(X_1-YX_2)L_{\delta}\right)L_{\delta}^{-1}.$$ 
This gives $X_1+X_2Y+L_{\delta}^{-1}(X_1-YX_2)L_{\delta}$ is diagonalizable and both the columns of $L_{\delta}$ are its eigenvectors with eigenvalue $2(a_0+aa_1)$. Write 
$$L_{\delta} 
=\begin{pmatrix}
    L_{\delta}^1 &  L_{\delta}^2\\
   \end{pmatrix},$$ 
where $L_{\delta}^1$ and $L_{\delta}^1$ are the columns of the matrix $L_{\delta}$. 
Therefore, 
\begin{align}
\left(X_1+X_2Y+L_{\delta}^{-1}(X_1-YX_2)L_{\delta}\right)\cdot L_{\delta}^1=2(a_0+aa_1)\cdot L_{\delta}^1. \label{6.1} \\ 
\left(X_1+X_2Y+L_{\delta}^{-1}(X_1-YX_2)L_{\delta}\right)\cdot L_{\delta}^2=2(a_0+aa_1)\cdot L_{\delta}^2.\label{6.2}
\end{align}
Equation (\ref{6.1}) gives
\begin{equation}\label{values of u and v}
u=\dfrac{-2a_1(a_3(a^2-b^2\alpha)+aa_2)}{a_2^2+(3a^2+b^2\alpha)a_3^2+4aa_2a_3}\text{ and }v=\dfrac{2a_1(a_2+aa_3)}{a_2^2+(3a^2+b^2\alpha)a_3^2+4aa_2a_3}.
\end{equation}
Equation (\ref{6.2}) gives two quadratic equations 
\begin{equation} \label{Q_1 Q_2}
Q_1(u,v)=0 \text{~~and~~} Q_2(u,v)=0.
\end{equation}

If $a_1=0$, then Equation (\ref{values of u and v}) gives $u=v=0$ and hence $\delta =I$, which contradicts our assumption $\delta \neq I$.
\\

Now we assume that $a_1\neq 0$. 
Substituting the values of $u,v$ from Equation (\ref{values of u and v}) in Equations (\ref{Q_1 Q_2}), one arrives at a contradiction to our assumption $x \in \F_{q^4} \backslash \F_{q^2}$, for which we skip the details.
\end{proof}

\begin{proposition}\label{multplicity 1 of sigma}
Let $\pi$ be a strongly cuspidal representation of $\GL_4(\cO_2)$. For $\delta \in \Omega$ and $\delta \neq I$, the representation $\pi^{\delta}_{N, \psi}$ is multiplicity free. 
In other words, if $\sigma$ is a regular representation of $\GL_2(\mathfrak{o}_2)$ such that $\omega_{\pi}=\omega_{\sigma}$ and
$\phi_{\mathcal{B}_{\delta}}$ appears in $\sigma|_{J^{1}_{2}}$, then $\sigma$ appears in $\pi^{\delta}_{N, \psi}$ with multiplicity one.
\end{proposition}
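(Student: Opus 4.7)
The plan is to verify multiplicity one by directly computing the character inner product $\langle \Theta_{\pi^\delta_{N,\psi}}, \Theta_\sigma\rangle_{\GL_2(\cO_2)}$ and showing it equals $1$, using the support of $\Theta_{\pi^\delta_{N,\psi}}$ established in Section \ref{Sec : 5}.

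First, I would invoke Theorem \ref{Character values2}(a),(b): since $\delta\neq I$, the character $\Theta_{\pi^\delta_{N,\psi}}$ vanishes outside $Z\cdot J^1_2$ and satisfies $\Theta_{\pi^\delta_{N,\psi}}(zk)=\omega_\pi(z)\Theta_{\pi^\delta_{N,\psi}}(k)$ for $z\in Z$, $k\in J^1_2$. Using the hypothesis $\omega_\sigma=\omega_\pi$ and parametrising elements of $Z\cdot J^1_2$ as products $zk$ with $(z,k)\in Z\times J^1_2$ (an $|Z\cap J^1_2|$-to-one parametrisation), the inner product over $\GL_2(\cO_2)$ collapses to
$$
\langle \Theta_{\pi^\delta_{N,\psi}}, \Theta_\sigma\rangle_{\GL_2(\cO_2)}
= \dfrac{|Z|\,|J^1_2|}{|\GL_2(\cO_2)|\,|Z\cap J^1_2|}\,\langle \Theta_{\pi^\delta_{N,\psi}}, \Theta_\sigma\rangle_{J^1_2}
= \dfrac{1}{q(q^2-1)}\,\langle \Theta_{\pi^\delta_{N,\psi}}, \Theta_\sigma\rangle_{J^1_2},
$$
after the elementary count $|\GL_2(\cO_2)|=q^5(q-1)(q^2-1)$, $|J^1_2|=q^4$, $|Z|=q(q-1)$ and $|Z\cap J^1_2|=q$.

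Next, I would describe the two restrictions to $J^1_2$. By Corollary \ref{character phi_B}(a), $\pi^\delta_{N,\psi}|_{J^1_2}$ contains only the characters $\phi_B$ with $B\sim\mathcal{B}_\delta$, and since $\GL_2(\cO_2)$ acts on the set of such characters by conjugation in a single orbit, they all occur with one common multiplicity $m$; comparing with $\dim\pi^\delta_{N,\psi}=q(q^2-1)$ from Lemma \ref{Dim trivial} gives $m\cdot\#\{B'\sim\mathcal{B}_\delta\}=q(q^2-1)$. On the other hand, Theorem \ref{constrcution} realises the regular representation $\sigma$ as $\Ind_{I(\phi_{\mathcal{B}_\delta})}^{\GL_2(\cO_2)}\tilde{\phi}_{\mathcal{B}_\delta}$ for some extension $\tilde{\phi}_{\mathcal{B}_\delta}$ of $\phi_{\mathcal{B}_\delta}$; a routine Mackey computation, using that $J^1_2=K^1_2$ is normal in $\GL_2(\cO_2)$ and contained in $I(\phi_{\mathcal{B}_\delta})$, then yields $\sigma|_{J^1_2}\cong\bigoplus_{B'\sim\mathcal{B}_\delta}\phi_{B'}$ with each summand appearing with multiplicity one.

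Combining these descriptions and using orthogonality of the distinct characters of the abelian group $J^1_2$,
$$
\langle \Theta_{\pi^\delta_{N,\psi}}, \Theta_\sigma\rangle_{J^1_2}=m\cdot\#\{B'\sim\mathcal{B}_\delta\}=q(q^2-1),
$$
so $\langle \Theta_{\pi^\delta_{N,\psi}}, \Theta_\sigma\rangle_{\GL_2(\cO_2)}=1$ as desired. The multiplicity-freeness assertion of the proposition then follows by combining this equality with Proposition \ref{pi-delta-N,psi contains only regular representation} and the central-character constraint in Remark \ref{Central character}, which rule out any other irreducible constituent from appearing at all. The only substantive input is the support statement in Theorem \ref{Character values2}; with that in hand, the argument above is essentially bookkeeping, and I do not anticipate any real obstacle.
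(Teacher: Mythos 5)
Your argument is correct and follows the same route the paper takes: use Theorem \ref{Character values2}(b) to restrict the inner product to $Z\cdot J^1_2$, collapse via the matching central characters, and then compare restrictions to $J^1_2$. The only cosmetic difference is that you evaluate $\langle\Theta_{\pi^\delta_{N,\psi}},\Theta_\sigma\rangle_{J^1_2}$ via the dimension count $\dim\pi^\delta_{N,\psi}=q(q^2-1)$ from Lemma \ref{Dim trivial}, whereas the paper reads the common multiplicity directly off Corollary \ref{character phi_B}(b) and multiplies by the size of the conjugacy class of $\mathcal{B}_\delta$; the two computations give the same value $q(q^2-1)$ and are interchangeable.
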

\begin{proof}
For $\delta \in \Omega$ and $\delta \neq I$, by 
Theorem \ref{Character values2} (b), we have
$\Theta_{\pi^{\delta}_{N,\psi}}(g)=0$ if $g\notin Z\cdot J^1_2$. 
Using $\omega_{\pi} = \omega_{\sigma}$ we get,
\begin{align*}
\left\langle \Theta_{\pi^{\delta}_{N,\psi}}, \Theta_{\sigma} \right\rangle_{\GL_2(\mathfrak{o}_2)} 
& = \dfrac{1}{|\GL_2(\cO_2)|}\underset{g=z\cdot k\in Z\cdot J^1_2}{\sum}\Theta_{\pi^{\delta}_{N,\psi}}(z\cdot k)\overline{\Theta}_{\sigma}(z\cdot k) \\
& = \dfrac{|Z/(J^1_2\cap Z)|}{|\GL_2(\cO_2)|}\underset{k\in J^1_2}{\sum}\Theta_{\pi^{\delta}_{N,\psi}}(k)\overline{\Theta}_{\sigma}(k) \\
& = \dfrac{|Z/(J^1_2\cap Z)|\cdot |J^1_2|}{|\GL_2(\cO_2)|}\left\langle \Theta_{\pi^{\delta}_{N,\psi}},\Theta_{\sigma}\right\rangle_{J^1_2}.
\end{align*}
By Corollary \ref{character phi_B}, we have
$\pi^{\delta}_{N,\psi}|_{J^{1}_{2}} 
= \underset{B \sim \mathcal{B}_{\delta}}{\bigoplus} \left( \dfrac{|\Stab(B)|}{q-1} \right) \phi_B$ 
and by Remark \ref{property of regular representation}, it follows that
$ \sigma|_{J^{1}_{2}} = \underset{B \sim \mathcal{B}_{\delta}}{\bigoplus}\phi_{B}$.
Therefore, 
$$\left\langle \Theta_{\pi^{\delta}_{N,\psi}},\Theta_{\sigma}\right\rangle_{J^1_2}
= \left( \dfrac{|\Stab(\mathcal{B}_{\delta})|}{q-1} \right) \cdot  \left|\text{Conjugacy class of $\mathcal{B}_{\delta}$}\right|,
$$ which gives
 $
\left\langle \Theta_{\pi^{\delta}_{N,\psi}}, \Theta_{\sigma}\right\rangle_{\GL_2(\mathfrak{o}_2)}=1.$
\end{proof}
\begin{remark}\label{kind of representation in pi delta N psi}
Note that for $\delta\neq I$, the number of $\sigma$ satisfying the property in 
Proposition \ref{multplicity 1 of sigma} is $\frac{|\Stab(\mathcal{B}_{\delta})|}{q-1}$ and each having dimension $\frac{|\GL_2(\F_q)|}{|\Stab(\mathcal{B}_{\delta})|}$.
This gives  $\dim(\pi^{\delta}_{N,\psi})=q(q^2-1) = \left( \frac{|\Stab(\mathcal{B}_{\delta})|}{q-1} \right) \cdot \left( \frac{|\GL_2(\F_q)|}{|\Stab(\mathcal{B}_{\delta})|} \right)$. 
Therefore, an irreducible representation $\sigma$ appears in $\pi^{\delta}_{N,\psi}$ if and only if $\sigma$ satisfies the properties mentioned in Proposition \ref{multplicity 1 of sigma}.
\end{remark}
\begin{lemma} \label{pi^1_{N,psi}}
The representation $\pi^{I}_{N,\psi} \cong \Ind_{\mathfrak{O}_2^\times\cdot J^1_2}^{\GL_2(\mathfrak{o}_2)}\left( \tilde{\phi}_{x}|_{\mathfrak{O}_2^\times\cdot J^1_2}\right)$. 
\end{lemma}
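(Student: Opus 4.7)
The plan is to compare the characters of $\pi^I_{N,\psi}$ and $\Pi_0 := \Ind_H^{\GL_2(\cO_2)}(\tilde\phi_x|_H)$, where $H = \mathfrak{O}_2^\times\cdot J^1_2$, embedded diagonally in $T = \mathcal{O}_2^\times\cdot K^1_2$ via $g\mapsto \mathrm{diag}(g, g)$, so that $\tilde\phi_x|_H$ makes sense. A direct count gives $|H| = q^4(q^2-1)$, hence $[\GL_2(\cO_2):H] = q(q-1) = \dim \pi^I_{N,\psi}$ by Proposition \ref{dimension}, so it suffices to show $\Theta_{\pi^I_{N,\psi}} = \Theta_{\Pi_0}$, where the right hand side is computed by the Frobenius formula
$$\Theta_{\Pi_0}(g) = \frac{1}{|H|}\sum_{\substack{y\in \GL_2(\cO_2)\\ y^{-1}gy\in H}}\tilde\phi_x(y^{-1}gy).$$

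I would verify agreement on four natural regions suggested by Theorem \ref{Character values2}. First, off the union of $\GL_2(\cO_2)$-conjugates of $H$, both characters vanish, by Theorem \ref{Character values2}(c) on the left and by construction on the right. Second, on $Z\cdot J^1_2$, which is normal in $\GL_2(\cO_2)$, every conjugate of $g = zk$ lies in $Z\cdot J^1_2\subset H$, and both characters factor as $\omega_\pi(z)$ times their value on $k$, reducing everything to the case $g\in J^1_2$. Third, on $J^1_2$ the identity $\tr(x\cdot \mathrm{diag}(A,A)) = 2\tr(X_1 A)$ yields
$$\tilde\phi_x\bigl(\mathrm{diag}(I+\varpi A,\, I+\varpi A)\bigr) = \psi_0(\varpi\cdot 2\tr(X_1 A)) = \phi_{\mathcal B_I}(\bar A),$$
since $\mathcal B_I = 2X_1$. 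Summing over the $|J^1_2| = q^4$ lifts of each $\bar y\in\GL_2(\F_q)$ then reproduces exactly the formula of Proposition \ref{Charcters at K12} (using $|T\cap P| = q^{12}(q^2-1)$ from Remark \ref{Intersection cardinality}).

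The remaining region is $g\in (\mathfrak{O}_2^\times\cdot J^1_2)\setminus(Z\cdot J^1_2)$, where $\bar g\in \F_{q^2}^\times\setminus\F_q^\times$ is a regular non-split semisimple element of $\GL_2(\F_q)$, whose centraliser in $\GL_2(\F_q)$ is exactly $\F_{q^2}^\times$. I claim $y^{-1}gy\in H$ iff $y\in\mathbf{\mathsf{N}}(H)$: the backward direction is immediate, and for the forward direction, $y^{-1}gy\in H$ forces $\bar y^{-1}\bar g\bar y\in\F_{q^2}^\times$, so by self-centralising of the non-split torus $\bar y\in\mathbf{\mathsf{N}}_{\GL_2(\F_q)}(\F_{q^2}^\times)$, whence $y\in \mathbf{\mathsf{N}}(H)$. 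Granting this, the Frobenius sum becomes $\frac{1}{|H|}\sum_{R\in\mathbf{\mathsf{N}}(H)}\tilde\phi_x(R^{-1}gR)$, which matches Theorem \ref{Character values2}(d) verbatim since $|H| = q^4(q^2-1)$. The main technical point will be this normaliser identification, but it is elementary once the centraliser of a non-split Cartan is known. With all four regions exhausted, the characters coincide and $\pi^I_{N,\psi}\cong \Pi_0$ follows.
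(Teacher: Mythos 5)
Your proposal is correct and follows the same strategy as the paper: compare the character of $\pi^I_{N,\psi}$ (given by Theorem \ref{Character values2}) with the Frobenius character of the induced representation, where the paper explicitly states "It is enough to verify that the characters of the two representations are the same... We leave the verification to the reader." Your four-region analysis is exactly the verification the paper omits, and the key identification that for $g\in H\setminus(Z\cdot J^1_2)$ the conjugating elements are precisely $\mathbf{\mathsf{N}}(H)$ (via self-centralising of the non-split torus) is the right mechanism to match Theorem \ref{Character values2}(d).
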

\begin{proof}
It is enough to verify that the characters of the two representations are the same.
The character of $\pi^{I}_{N, \psi}$ is given in Theorem \ref{Character values2}. 
The character of  $ \Ind_{\mathfrak{O}_2^\times\cdot J^1_2}^{\GL_2(\mathfrak{o}_2)}\left( \tilde{\phi}_{x}|_{\mathfrak{O}_2^\times\cdot J^1_2}\right)$ can be calculated using the formula for the character of an induced representation.
We leave the verification to the reader that these two characters are the same.
\end{proof}
\begin{corollary} \label{Pi I N psi is multiplicity free}
The representation $\pi^{I}_{N,\psi}$  is multiplicity free.  
\end{corollary}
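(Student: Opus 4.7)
By Lemma~\ref{pi^1_{N,psi}}, it suffices to show that $\Ind_{H}^{G}\chi$ is multiplicity free, where $H=\mathfrak{O}_{2}^{\times}\cdot J^{1}_{2}$, $G=\GL_{2}(\mathfrak{o}_{2})$, and $\chi=\tilde{\phi}_{x}|_{H}$. By the same trace computation that underlies Proposition~\ref{Charcters at K12}, the restriction of $\chi$ to $J^{1}_{2}$ equals $\phi_{2X_{1}}=\phi_{\mathcal{B}_{I}}$. I would split the analysis according to whether the matrix $2X_{1}\in M_{2}(\F_{q})$ is a regular element; using the explicit formula for $X_{1}$ from Equation~(\ref{F_q^2 embedding}) this splits as $a_{1}\neq 0$ versus $a_{1}=0$.

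In the case $a_{1}\neq 0$, the matrix $2X_{1}$ is regular non-split semisimple, its centraliser in $\GL_{2}(\F_{q})$ is $\F_{q^{2}}^{\times}$, and hence the inertia group of $\phi_{2X_{1}}$ inside $\GL_{2}(\mathfrak{o}_{2})$ is exactly $H$. Applying Theorem~\ref{constrcution}, now to the group $\GL_{2}(\mathfrak{o}_{2})$ with $n=2$, $l=2$, $m=1$, one concludes that $\Ind_{H}^{G}\chi$ is an irreducible regular representation of $\GL_{2}(\mathfrak{o}_{2})$, so multiplicity freeness is automatic.

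In the case $a_{1}=0$ the matrix $2X_{1}=2a_{0}I$ is scalar, so $\chi|_{J^{1}_{2}}$ factors through the homomorphism $J^{1}_{2}\to 1+\varpi\mathfrak{o}_{2}$ coming from $\det$. Picking a character $\tilde{\eta}$ of $\mathfrak{o}_{2}^{\times}$ whose restriction to $1+\varpi\mathfrak{o}_{2}$ matches this factor, we can write $\chi=\bigl((\tilde{\eta}\circ\det)|_{H}\bigr)\cdot\chi''$, where $\chi''$ is a character of $H$ trivial on $J^{1}_{2}$, hence inflated from a character of $H/J^{1}_{2}\cong\F_{q^{2}}^{\times}$. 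Since $J^{1}_{2}$ is normal in $G$ and contained in $H$, the projection formula combined with the inflation-induction identity gives
$$
\pi^{I}_{N,\psi}\;\cong\;(\tilde{\eta}\circ\det)\otimes\Bigl(\Ind_{\F_{q^{2}}^{\times}}^{\GL_{2}(\F_{q})}\chi''\Bigr),
$$
with the second factor viewed as a representation of $\GL_{2}(\mathfrak{o}_{2})$ via inflation along the reduction map $\GL_{2}(\mathfrak{o}_{2})\to\GL_{2}(\F_{q})$. Multiplicity freeness then reduces to the classical statement that induction of any character of the non-split torus to $\GL_{2}(\F_{q})$ is multiplicity free, which can be verified from the character table of $\GL_{2}(\F_{q})$ or by a direct Mackey computation on $\F_{q^{2}}^{\times}\backslash\GL_{2}(\F_{q})/\F_{q^{2}}^{\times}$.

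The main obstacle is the case $a_{1}=0$: Theorem~\ref{constrcution} does not apply because $\phi_{2X_{1}}$ is no longer a regular character of $J^{1}_{2}$, so the multiplicity freeness has to be extracted from the finite-field $\GL_{2}(\F_{q})$ theory via the inflation reduction above, and one has to track the twist by $(\tilde{\eta}\circ\det)$ carefully. The case $a_{1}\neq 0$, by contrast, is essentially immediate once $H$ is identified as the inertia group of $\phi_{2X_{1}}$ and the regular-representation construction of Theorem~\ref{constrcution} is applied.
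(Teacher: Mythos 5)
Your proposal is correct and follows essentially the same route as the paper: you split into the cases $X_1\in\F_{q^2}\setminus\F_q$ (where $\pi^{I}_{N,\psi}$ is already an irreducible regular representation by the construction in Theorem~\ref{constrcution}) and $X_1\in\F_q$ (where you twist by a character of $\GL_2(\cO_2)$ trivializing $\phi_{2X_1}$ on $J^1_2$, descend to $\GL_2(\F_q)$ by inflation--induction, and invoke multiplicity-freeness of $\Ind_{\F_{q^2}^\times}^{\GL_2(\F_q)}$). The paper's proof does exactly this, phrasing the twisting character $\zeta$ via Clifford theory rather than explicitly as $\tilde{\eta}\circ\det$, and citing the strong Gelfand property of $(\GL_2(\F_q),\F_{q^2}^\times)$.
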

\begin{proof}
First consider $X_1\in \F_{q^2}\backslash\F_q$. 
Then $\pi^{I}_{N,\psi} $ is an irreducible regular representation of non-split semisimple type and hence nothing to prove.
\\
Now consider $X_1\in \F_q$. 
By Clifford theory, there exists a character $\zeta:\GL_2(\cO_2) \rightarrow \C^{\times}$ such that $\zeta|_{J^1_2} =\phi_{-(2X_1)}$.
Then $\theta' := (\theta \cdot \phi_{2X_1}) \cdot \zeta|_{\mathfrak{O}_2^\times\cdot J^1_2}$ is trivial on $J^{1}_{2}$ and therefore $\theta'$ factors through $(\mathfrak{O}_2^\times\cdot J^1_2) / J^{1}_{2} \cong \F_{q^2}^{\times}$. 
Since $\pi^{I}_{N,\psi}\cong \Ind_{\mathfrak{O}_2^\times\cdot J^1_2}^{\GL_2(\cO_2)} \left( \theta\cdot \phi_{2X_1} \right)$, we get $\pi^{I}_{N,\psi}\otimes~\zeta\cong \Ind_{\mathfrak{O}_2^\times\cdot J^1_2}^{\GL_2(\cO_2)} \left( \theta' \right)$ (see \cite[Exercise 3.16]{Fulton-Harris}). 
It follows that $\pi^{I}_{N,\psi}\otimes~\zeta$ is trivial on $J^1_2$ and hence factors through $\GL_2(\cO_2)/ J^1_2 \cong \GL_2(\F_q)$.
It is enough to prove that $\pi^{I}_{N, \psi} \otimes \zeta \cong \Ind_{\F_{q^2}^{\times}}^{\GL_2(\F_q)} (\theta')$ is multiplicity free representation as a representation of $\GL_2(\F_q)$.
It follows from the fact that $(\GL_2(\F_q), \F_{q^2}^{\times})$ is a strong Gelfand pair which can be easily verified by looking at the character table of $\GL_2(\F_q)$.   
\end{proof}

\begin{theorem}\label{Matrices conjugate iff cosets are same}
For $\delta, \delta' \in \Omega$, $\mathcal{B}_\delta \sim \mathcal{B}_{\delta^\prime}$ if and only if $T\delta P=T\delta^\prime P$.
\end{theorem}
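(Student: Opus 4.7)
My strategy is to prove the two implications separately. The forward direction $T\delta P = T\delta' P \Rightarrow \mathcal{B}_\delta \sim \mathcal{B}_{\delta'}$ will follow formally from what is already established. If $\delta$ and $\delta'$ represent the same double coset, then by Mackey theory the induced representations $\pi^\delta$ and $\pi^{\delta'}$ are isomorphic as $P$-modules, since changing the representative of a fixed double coset yields an isomorphic induced representation. Consequently $\pi^\delta_{N,\psi} \cong \pi^{\delta'}_{N,\psi}$ as $\GL_2(\cO_2)$-representations. Restricting to $J^1_2$ and invoking Corollary \ref{character phi_B}(a), the set of $B \in M_2(\F_q)$ such that $\phi_B$ occurs in $\pi^\delta_{N,\psi}|_{J^1_2}$ is exactly the $\GL_2(\F_q)$-conjugacy class of $\mathcal{B}_\delta$; the same holds with $\delta$ replaced by $\delta'$. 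Equality of these two sets of characters forces $\mathcal{B}_\delta \sim \mathcal{B}_{\delta'}$.

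For the converse, assume $\mathcal{B}_\delta \sim \mathcal{B}_{\delta'}$ with $\delta = A_{u,v}$ and $\delta' = A_{u',v'}$. Since $\tr(\mathcal{B}_\delta) = \tr(2X_1)$ is independent of $\delta$ (see Notation \ref{Notations for matrices}), the conjugacy condition reduces to the single scalar equation $\det(\mathcal{B}_\delta) = \det(\mathcal{B}_{\delta'})$. I would expand $\mathcal{B}_\delta = L_\delta(X_1 + X_2 U) L_\delta^{-1} + X_1 - UX_2$ using the explicit entries of $X_1, X_2, X_3$ from (\ref{Fq4 embedding}) together with the formula (\ref{determinant for Lu,v}) for $\det(L_\delta)$, and express $\det(\mathcal{B}_\delta)$ as a rational function of $(u,v)$ with coefficients in $\F_q$ depending on $a_0, a_1, a_2, a_3, a, b, \alpha$. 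The target is an algebraic identity of the shape
\[
\det(\mathcal{B}_\delta) - \det(\mathcal{B}_{\delta'}) \;=\; K(\delta, \delta') \cdot C(\delta, \delta'),
\]
with $C(\delta, \delta')$ the polynomial from Lemma \ref{Double cosets} and $K(\delta, \delta')$ a nonzero factor under our hypotheses. Granting this identity, vanishing of the left hand side forces $C(\delta, \delta') = 0$ and Lemma \ref{Double cosets} concludes $T\delta P = T\delta' P$.

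The main obstacle will be verifying this identity, since the entries of $L_\delta$ are already quadratic in $(u,v)$ and clearing denominators is tedious. A useful reorganization is to observe that the conjugate $\delta^{-1} x \delta$ has the block form
\[
\delta^{-1} x \delta = \begin{pmatrix} X_1 + X_2 U & X_2 \\ L_\delta & X_1 - UX_2 \end{pmatrix},
\]
so that $\mathcal{B}_\delta$ is precisely the sum of the two diagonal blocks of $\diag(L_\delta, I) \cdot \delta^{-1}x\delta \cdot \diag(L_\delta, I)^{-1}$. This block viewpoint keeps the identity $\tr(\mathcal{B}_\delta) = \tr(2X_1)$ manifest and organizes the computation of $\det(\mathcal{B}_\delta)$. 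The nonvanishing of $K(\delta, \delta')$ in the identity above will rely, as in Lemmas \ref{Uniqueness} and \ref{A_w and A_uv are the same}, on the hypothesis (\ref{Norm is square free}) that $a^2 - b^2 \alpha$ is a non-square in $\F_q$, which prevents the leading coefficient from degenerating.
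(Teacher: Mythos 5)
Your proposal is correct and follows essentially the same route as the paper: the forward direction via Mackey theory and Corollary \ref{character phi_B}, and the converse by reducing $\mathcal{B}_\delta \sim \mathcal{B}_{\delta'}$ to $\det(\mathcal{B}_\delta) = \det(\mathcal{B}_{\delta'})$ (trace being automatic), then factoring this equality into a nonvanishing factor times $C(\delta,\delta')$ so that Lemma \ref{Double cosets} applies. Your block-matrix observation that $\mathcal{B}_\delta$ is the sum of the diagonal blocks of $\diag(L_\delta, I)\cdot\delta^{-1}x\delta\cdot\diag(L_\delta,I)^{-1}$ is a nice bookkeeping device not spelled out in the paper, but the underlying argument is the same; note also that the paper's nonvanishing factor splits as $M$ times $(a_2+aa_3)^2-(a_3b)^2\alpha$, with the latter controlled by $x$ being regular elliptic rather than by (\ref{Norm is square free}).
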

\begin{proof}
First, assume that $T\delta P=T\delta^{\prime}P$. 
Then $\pi^{\delta}_{N, \psi} = \pi^{\delta'}_{N, \psi}$ and by Corollary \ref{character phi_B}, we get $\mathcal{B}_\delta\sim \mathcal{B}_{\delta^\prime}$. 
For the converse, assume that $\mathcal{B}_\delta\sim \mathcal{B}_{\delta^\prime}$. 
Then, $\det(\mathcal{B}_\delta)=\det( \mathcal{B}_{\delta^\prime})$. 
It can be checked that for $\delta =A_{u,v}$, the determinant of $\mathcal{B}_{\delta}$ is given by
$$\det(\mathcal{B}_{\delta})=4(a_0+aa_1)^2-\left[\dfrac{4a_1^2(A)
-\left((u+av)^2-(bv)^2\alpha\right)\cdot\left(a_2^2+a_3^2(3a^2+b^2\alpha)+4aa_2a_3\right)^2 }{\det(L_{\delta})}\right]$$
where $A=A(u,v)$ is a quadratic polynomial in $u$ and $v$. \\
Moreover, for $\delta=A_{u,v}$ and $\delta' = A_{u', v'}$ it can be deduced that the conditions for $\det(\mathcal{B}_{\delta}) = \det( \mathcal{B}_{\delta'})$ comes out to be the following for which we refer the reader to \cite{AnkitaThesis} and omit the details
\begin{align}\label{Equation for $M$}
& M \cdot (a^2a_3^2+2aa_2a_3+a_2^2-a_3^2b^2\alpha)\cdot C(\delta, \delta') =0,
\end{align}
where 
{\small{\begin{align*}
M = 4a_1^4b^2\alpha  - 4a_1^2(a^3a_3^2+2a^2a_2a_3+aa_2^2+3aa_3^2b^2\alpha+2a_2a_3b^2\alpha)
 +(a_2^2 +a_3^2(3a^2+ b^2\alpha)+4aa_2a_3)^2.
\end{align*}}}
Observe that the second factor in (\ref{Equation for $M$}) is not zero, since $$a^2a_3^2+2aa_2a_3+a_2^2-a_3^2b^2\alpha = (a_2+aa_3)^2-(a_3b)^2\alpha = 0 \implies a_2=0, a_3=0$$ 
and then $x \in \F_{q^2}$, a contradiction.\\
Now we claim that $M\neq 0$.
Note that $M$ is a quadratic polynomial in $a_1^2$.
If $M=0$, then the discriminant of this quadratic polynomial is a square in $\F_{q}$.
But, the discriminant is given by
$$16(a^2-b^2\alpha)\left((a_2+aa_3)^2-(a_3b)^2\alpha\right)^2
$$
which is not a square in $\F_q$ (see the assumption (\ref{Norm is square free})).
This proves $M \neq 0$.\\
Therefore, by Equation (\ref{Equation for $M$}) we get $C(\delta, \delta')=0$ and Lemma \ref{Double cosets} gives  $T\delta P = T\delta'P$.
\end{proof}
We have described each $\pi^{\delta}_{N, \psi}$ and proved that for distinct $\delta, \delta' \in \Omega_{0}$ the restriction of $\pi^{\delta}_{N, \psi}$ and $\pi^{\delta'}_{N, \psi}$ to $J^{1}_{2}$ are distinct, which gives the following result. 
\begin{corollary} \label{disjointness of pi^delta}
For $\delta, \delta' \in \Omega_{0}$ with $\delta \neq \delta'$ the representations $\pi^{\delta}_{N, \psi}$ and $\pi^{\delta'}_{N, \psi}$ have no irreducible representation in common.
\end{corollary}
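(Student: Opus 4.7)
The plan is to derive the disjointness by looking at the restrictions $\pi^{\delta}_{N,\psi}|_{J^{1}_{2}}$ and using the already-established bijection between double cosets and conjugacy classes of the matrices $\mathcal{B}_{\delta}$.

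First I would dispose of the trivial case: by Lemma \ref{Uniqueness} there is exactly one $\delta \in \Omega_{0}$ with $\pi^{\delta}_{N,\psi} = 0$, and this $\delta$ cannot share any irreducible constituent with another $\pi^{\delta'}_{N,\psi}$. So I may assume both $\pi^{\delta}_{N,\psi}$ and $\pi^{\delta'}_{N,\psi}$ are nonzero, which also guarantees that $\mathcal{B}_{\delta}$ and $\mathcal{B}_{\delta'}$ are defined.

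Next I would argue by contradiction: suppose $\sigma$ is an irreducible representation of $\GL_{2}(\cO_{2})$ occurring in both $\pi^{\delta}_{N,\psi}$ and $\pi^{\delta'}_{N,\psi}$. Restricting to $J^{1}_{2}$, the representation $\sigma|_{J^{1}_{2}}$ is a direct sum of characters $\phi_{B}$ for various $B \in M_{2}(\F_{q})$; pick any such $B$. Then $\phi_{B}$ appears in both $\pi^{\delta}_{N,\psi}|_{J^{1}_{2}}$ and $\pi^{\delta'}_{N,\psi}|_{J^{1}_{2}}$. By Corollary \ref{character phi_B}(a) this forces $B \sim \mathcal{B}_{\delta}$ and $B \sim \mathcal{B}_{\delta'}$, hence $\mathcal{B}_{\delta} \sim \mathcal{B}_{\delta'}$.

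At this point I would invoke Theorem \ref{Matrices conjugate iff cosets are same}, which tells us that $\mathcal{B}_{\delta} \sim \mathcal{B}_{\delta'}$ is equivalent to $T\delta P = T\delta' P$. But by the defining property of $\Omega_{0}$ (Notation \ref{Notations for matrices}(b)), distinct elements of $\Omega_{0}$ represent distinct double cosets in $\bar{T} \backslash \bar{G}/\bar{P}$, and hence in $T \backslash G /P$; so this forces $\delta = \delta'$, contradicting the hypothesis $\delta \neq \delta'$. This completes the argument.

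There is really no hard step: all the genuine content has been absorbed into Corollary \ref{character phi_B} (which pins down which $\phi_{B}$ can appear in $\pi^{\delta}_{N,\psi}|_{J^{1}_{2}}$) and Theorem \ref{Matrices conjugate iff cosets are same} (which converts conjugacy of the $\mathcal{B}_{\delta}$'s into coincidence of double cosets). The only thing worth emphasising is that the restriction to $J^{1}_{2}$ is enough to separate the pieces indexed by different $\delta$, so the corollary is really a one-line consequence of the two preceding results.
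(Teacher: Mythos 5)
Your proof is correct and matches the paper's argument exactly: the paper derives the corollary from the observation (based on Corollary \ref{character phi_B} and Theorem \ref{Matrices conjugate iff cosets are same}) that for distinct $\delta, \delta' \in \Omega_0$ the restrictions to $J^1_2$ are already disjoint. You have simply spelled out the (trivial but worth noting) case where one of the two pieces vanishes and made the contrapositive chain explicit.
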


\begin{theorem}
Let $\pi$ be a strongly cuspidal representation of $\GL_4(\cO_2)$. 
Then $\pi_{N,\psi}$ is multiplicity free.
\end{theorem}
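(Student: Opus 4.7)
The plan is essentially an assembly: nearly all of the work has already been done in the preceding parts of Section \ref{Sec : 6}, and the theorem follows by combining three results together with the Mackey decomposition. Concretely, I would open the proof by recalling from Section \ref{Sec:3} that
\[
\pi_{N,\psi} \cong \bigoplus_{\delta \in \Omega_0} \pi^{\delta}_{N,\psi},
\]
so that for any irreducible representation $\sigma$ of $\GL_2(\cO_2)$,
\[
\dim \Hom_{\GL_2(\cO_2)}(\pi_{N,\psi}, \sigma) \;=\; \sum_{\delta \in \Omega_0} \dim \Hom_{\GL_2(\cO_2)}(\pi^{\delta}_{N,\psi}, \sigma).
\]
It therefore suffices to show that each individual term on the right is at most one, and that at most one term in the sum is non-zero.

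The first statement is handled directly by earlier results: Corollary \ref{Pi I N psi is multiplicity free} gives that $\pi^{I}_{N,\psi}$ is multiplicity free, while Proposition \ref{multplicity 1 of sigma} together with Remark \ref{kind of representation in pi delta N psi} shows that for every $\delta \neq I$ in $\Omega_0$ the representation $\pi^{\delta}_{N,\psi}$ is a direct sum of distinct irreducible regular representations of $\GL_2(\cO_2)$, each appearing with multiplicity one. Thus $\dim \Hom_{\GL_2(\cO_2)}(\pi^{\delta}_{N,\psi}, \sigma) \leq 1$ for every $\delta \in \Omega_0$.

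The second statement is Corollary \ref{disjointness of pi^delta}: for distinct $\delta, \delta' \in \Omega_0$, the representations $\pi^{\delta}_{N,\psi}$ and $\pi^{\delta'}_{N,\psi}$ share no irreducible constituent. The underlying reason, which I would briefly recall for the reader, is that any irreducible $\sigma$ appearing in $\pi^{\delta}_{N,\psi}$ must satisfy $\phi_{\mathcal{B}_{\delta}} \subset \sigma|_{J^1_2}$ by Corollary \ref{character phi_B}, and Theorem \ref{Matrices conjugate iff cosets are same} guarantees that the $\GL_2(\F_q)$-conjugacy class of $\mathcal{B}_{\delta}$ distinguishes distinct double cosets. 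Hence at most one of the summands $\pi^{\delta}_{N,\psi}$ can contain a given irreducible $\sigma$. Combining these two observations immediately yields $\dim \Hom_{\GL_2(\cO_2)}(\pi_{N,\psi}, \sigma) \leq 1$, which is Theorem \ref{multiplicity one}.

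There is no genuine obstacle at this stage, since all the hard work, namely the character computations in Section \ref{Sec : 5}, the analysis of which characters $\phi_B$ occur in $\pi^{\delta}_{N,\psi}|_{J^1_2}$, the strong Gelfand pair input $(\GL_2(\F_q), \F_{q^2}^\times)$ used in Corollary \ref{Pi I N psi is multiplicity free}, and the injectivity of $\delta \mapsto [\mathcal{B}_\delta]$ in Theorem \ref{Matrices conjugate iff cosets are same}, has already been carried out. The proof of the theorem is therefore a short paragraph citing these prior results.
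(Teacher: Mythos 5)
Your proof is correct and follows essentially the same route as the paper: decompose $\pi_{N,\psi}$ into the summands $\pi^{\delta}_{N,\psi}$, invoke Proposition \ref{multplicity 1 of sigma} and Corollary \ref{Pi I N psi is multiplicity free} for the multiplicity-freeness of each summand, and Corollary \ref{disjointness of pi^delta} for pairwise disjointness of their constituents. The only difference is that you briefly unwind the reason behind Corollary \ref{disjointness of pi^delta}, which the paper simply cites.
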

\begin{proof}
For $\delta, \delta' \in \Omega_{0}$ such that $\delta \neq \delta'$, by Corollary \ref{disjointness of pi^delta}, $\pi^{\delta}_{N,\psi}$ and $\pi^{\delta'}_{N,\psi}$ have no irreducible representation in common.
By Proposition \ref{multplicity 1 of sigma} and Corollary \ref{Pi I N psi is multiplicity free} $\pi^{\delta}_{N, \psi}$ is multiplicity free for all $\delta \in \Omega_{0}$ and hence $\pi_{N, \psi}$ is multiplicity free.
\end{proof}

Recall that $| \Omega_{0}| =q+1$ (Lemma \ref{no of distinct double cosets}) and $|\{ \delta \in \Omega_{0} : \pi^{\delta}_{N, \psi} \neq 0 \}| =q$ (Lemma \ref{Uniqueness}). 
Therefore, there are exactly $q$ conjugacy classes represented by $\mathcal{B}_{\delta}$ such that $\phi_{\mathcal{B}_{\delta}}$ appears in $\pi_{N, \psi}$ restricted to $K^{1}_{2}$. 
Moreover, all the $\mathcal{B}_{\delta}$'s  have distinct determinant (Theorem \ref{Matrices conjugate iff cosets are same}) and then, 
$$\{ \det(\mathcal{B}_{\delta}) : \delta \in \Omega_{0} \} = \F_{q}.$$
Note that the trace of each $\mathcal{B}_{\delta}$ is the same as $\tr(2X_{1})$. 
We conclude the following :
\\
(1) all the $q-1$ semisimple (both split and non-split) conjugacy classes with trace $\tr({2X_1})$ are represented by $\mathcal{B}_{\delta}$ for some $\delta \in \Omega_{0}$. \\
(2) The determinant of the scalar matrix and the split non-semisimple matrices with trace $\tr(2X_1)$ are the same and exactly one of these two conjugacy classes will be represented by $\mathcal{B}_{\delta}$ for some $\delta \in \Omega_{0}$.

We summarise the results discussed in Section \ref{Sec : 6} in the theorem below.

\begin{theorem} \label{Complete description of pi_N,psi}
Let $\pi=\Ind_{I(\phi_x)}^{\GL_4(\cO_2)}\tilde{\phi}_x$ be a strongly cuspidal representation of $\GL_4(\cO_2)$. 
\begin{enumerate}[label = {(\alph*)}]
\item Let $X_1\in \F_q$, then $\pi_{N,\psi}$ consists of the following representations of $\GL_2(\cO_2)$.
\begin{enumerate}[label = {(\roman*)}]
\item  The representation $\pi^{I}_{N,\psi} = \Ind_{\mathfrak{O}_2^\times \cdot J^{1}_{2}}^{\GL_2(\cO_2)} \left( \tilde{\phi}_{2X_1} \right)$, which is not a regular representation and it is also not irreducible.
\item An irreducible regular representation $\sigma$ of $\GL_2(\cO_2)$ with multiplicity one such that $\omega_{\sigma}=\omega_{\pi}$ and $\sigma|_{J^{1}_{2}}$ contains the character $\phi_B$ with $B$ a semisimple matrix of trace $\tr(2X_1)$. 
\end{enumerate}

\item Let $X_1\in \F_{q^2} \backslash\F_q$, then $\pi_{N,\psi}$ consists of the following representations of $\GL_2(\cO_2)$.
\begin{enumerate}[label = {(\roman*)}]
\item The irreducible regular representation $\pi^{I}_{N,\psi}=\Ind_{\mathfrak{O}_2^\times \cdot J^{1}_{2}}^{\GL_2(\cO_2)} \left( \tilde{\phi}_{2X_1} \right)$ with multiplicity one.
\item An irreducible regular representation $\sigma$ of $\GL_2(\cO_2)$ with multiplicity one such that $\omega_{\sigma}=\omega_{\pi}$ and $\sigma|_{J^{1}_{2}}$ contains the character $\phi_B$ with $B$ a regular matrix of trace $\tr(2X_1)$ and $B \nsim 2X_1$. 
\end{enumerate}
\end{enumerate}
\end{theorem}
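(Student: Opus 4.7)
The plan is to assemble the theorem from the results already established in this section. Mackey theory gives $\pi_{N,\psi} = \bigoplus_{\delta \in \Omega_{0}} \pi^{\delta}_{N,\psi}$, and Corollary \ref{disjointness of pi^delta} ensures that distinct summands share no irreducible constituent. Hence it suffices to describe each $\pi^{\delta}_{N,\psi}$ and then to classify the conjugacy classes of $\mathcal{B}_{\delta}$ as $\delta$ runs through $\Omega_{0}$.

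For $\delta = I$, Lemma \ref{pi^1_{N,psi}} gives $\pi^{I}_{N,\psi} \cong \Ind_{\mathfrak{O}_{2}^{\times} \cdot J^{1}_{2}}^{\GL_{2}(\cO_{2})}(\tilde{\phi}_{x}|_{\mathfrak{O}_{2}^{\times} \cdot J^{1}_{2}})$. Under the block-diagonal embedding $\GL_{2}(\cO_{2}) \hookrightarrow \GL_{4}(\cO_{2})$, the restriction of $\tilde{\phi}_{x}$ to $J^{1}_{2}$ equals $\phi_{2X_{1}}$, so the inducing character is $\tilde{\phi}_{2X_{1}}$. If $X_{1} \in \F_{q}$, then $2X_{1}$ is scalar and therefore non-regular, so $\pi^{I}_{N,\psi}|_{J^{1}_{2}}$ is a sum of copies of a non-regular character, ruling out regular irreducible constituents. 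The argument used in Corollary \ref{Pi I N psi is multiplicity free} further shows (after a twist) that $\pi^{I}_{N,\psi}$ factors through $\GL_{2}(\F_{q})$ and coincides with $\Ind_{\F_{q^{2}}^{\times}}^{\GL_{2}(\F_{q})}(\theta')$, whose dimension $q(q-1)$ is too large to be irreducible for $\GL_{2}(\F_{q})$, establishing (a)(i). If $X_{1} \in \F_{q^{2}} \setminus \F_{q}$, then $2X_{1}$ is regular elliptic in $M_{2}(\F_{q})$ with inertia group $\mathfrak{O}_{2}^{\times} \cdot J^{1}_{2}$ in $\GL_{2}(\cO_{2})$; Theorem \ref{constrcution} then makes $\pi^{I}_{N,\psi}$ an irreducible regular representation of non-split semisimple type, and Corollary \ref{Pi I N psi is multiplicity free} provides multiplicity one, giving (b)(i).

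For $\delta \neq I$, Proposition \ref{pi-delta-N,psi contains only regular representation} shows that every irreducible constituent of $\pi^{\delta}_{N,\psi}$ is regular, and Proposition \ref{multplicity 1 of sigma} with Remark \ref{kind of representation in pi delta N psi} identifies these constituents as precisely the regular $\sigma$ with $\omega_{\sigma} = \omega_{\pi}$ and $\phi_{\mathcal{B}_{\delta}} \subset \sigma|_{J^{1}_{2}}$, each appearing with multiplicity one. Lemma \ref{Uniqueness} yields exactly $q$ double cosets with $\pi^{\delta}_{N,\psi} \neq 0$, and Theorem \ref{Matrices conjugate iff cosets are same} guarantees that the corresponding matrices $\mathcal{B}_{\delta}$ are pairwise non-conjugate; all share the trace $\tr(2X_{1})$, and their $q$ distinct determinants exhaust $\F_{q}$.

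To finish, one classifies these $q$ conjugacy classes by determinant. A $2\times 2$ matrix over $\F_{q}$ with fixed trace $t$ is determined up to conjugacy by its determinant, except at $d = t^{2}/4$, where both the scalar class and the split non-semisimple (Jordan) class coexist. Since Proposition \ref{pi-delta-N,psi contains only regular representation} excludes scalar $\mathcal{B}_{\delta}$ for $\delta \neq I$, the scalar class can arise only through $\mathcal{B}_{I}$. In case (a), $\mathcal{B}_{I} = 2X_{1}$ is scalar with $\det = t^{2}/4$, so no $\delta \neq I$ attains $d = t^{2}/4$, forcing the remaining $q - 1$ classes to exhaust the $q-1$ non-scalar semisimple conjugacy classes of trace $t$, which is (a)(ii). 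In case (b), $\mathcal{B}_{I} = 2X_{1}$ is non-split semisimple with $\det \neq t^{2}/4$, so the determinant $t^{2}/4$ must occur for some $\delta \neq I$, and the prohibition on scalars forces that class to be the split non-semisimple one; the remaining $q - 2$ classes sweep out the semisimple classes of trace $t$ other than $2X_{1}$, yielding (b)(ii). The main subtlety lies in this determinant-based case analysis, which explains why the split non-semisimple type appears exactly in case (b) and is absent in case (a).
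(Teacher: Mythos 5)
Your assembly is correct and follows essentially the same route the paper takes: the paper itself states this theorem as a summary of the section, and the discussion immediately preceding it is precisely the determinant-based conjugacy-class count you carry out. One small caution is worth noting: you cite Theorem \ref{Matrices conjugate iff cosets are same} for the claim that the $q$ matrices $\mathcal{B}_{\delta}$ have $q$ \emph{distinct determinants}, but the theorem's statement only gives pairwise non-conjugacy, and non-conjugate matrices of fixed trace can share a determinant (scalar and Jordan). What is actually used is the stronger fact embedded in the \emph{proof} of that theorem, namely that $\det(\mathcal{B}_{\delta}) = \det(\mathcal{B}_{\delta'})$ already forces $T\delta P = T\delta' P$; without this, your later step ``no $\delta \neq I$ attains $d = t^{2}/4$'' in case (a) would not follow from the theorem's statement alone. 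The paper makes the same citation shorthand, so this is an imprecision inherited from the source rather than a new gap, and once that is understood the rest of your case analysis (scalar class only from $\mathcal{B}_{I}$ by Proposition \ref{pi-delta-N,psi contains only regular representation}, $\mathcal{B}_{I}$ scalar versus non-split semisimple according to whether $X_{1} \in \F_{q}$, the Jordan class appearing exactly in case (b)) is exactly the paper's argument.
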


\section{A description of $\Pi= \Ind_{\mathfrak{O}_2^\times}^{\GL_2(\mathfrak{o}_2)}(\theta|_{\mathfrak{O}_2^\times})$}\label{Sec : 7}

Recall from Section \ref{Sec:4} that we have fixed $a, b, \alpha \in \F_{q}$ such that $\alpha$ is a square free element in $\F_{q}$ and $a+b \sqrt{\alpha}$ is a square free element in $\F_{q^2}= \F_{q}(\sqrt{\alpha})$. We have also fixed an embedding $\F_{q^2} \hookrightarrow M_2(\F_q)$ as given in (\ref{F_q^2 embedding}).
We state the following easy lemma without proof. 
It provides a set of representatives for various double cosets, which will be useful later in this section.
\begin{lemma} \label{Various double cosets1}
For $y \in \F_{q}, z \in \F_{q}^{\times}$ write $\gamma(y,z) 
=\begin{pmatrix}
1 & y \\ 0 & z    
\end{pmatrix} $ and 
$$\Gamma = \left\lbrace \gamma(y,z) : y \in \F_{q}, z \in \F_{q}^{\times} \right\rbrace.$$

\begin{enumerate}[label = {(\alph*)}]
\item The set $\Gamma$ is a set of distinct representatives of $\F_{q^2}^\times\backslash\GL_2(\F_q)$.
\item For $\gamma(y,z)$ and $\gamma(y',z') \in \Gamma$ ,
$\mathbb{F}_{q^2}^\times~\gamma(y,z)~\mathbb{F}_{q^2}^\times=\mathbb{F}_{q^2}^\times~\gamma(y',z')~\mathbb{F}_{q^2}^\times$~~if and only if~~
\\
$$\dfrac{y^2+2ay(z-1)+(1+z^2)(a^2-b^2\alpha)}{z}=\dfrac{y'^2+2ay'(z'-1)+(1+z'^2)(a^2-b^2\alpha)}{z'}.$$ 
We fix a subset $\Gamma_0\subset\Gamma$ representing distinct  double cosets for $\F_{q^2}^\times\backslash\GL_2(\F_q)/\F_{q^2}^\times$.
\item Let $T_1 = \left\lbrace \begin{pmatrix}
    m & 0\\
    0 & m'
\end{pmatrix} : m, m' \in \F_q^\times \right\rbrace$. 
For $\F_{q^2}^{\times} \backslash \GL_2(\F_q)/T_1$, a set of distinct representatives  is given by
$\Gamma_{1} := \left\lbrace \gamma(y,1): y\in \F_q\right\rbrace$.
\item Let $T_2 = \left\lbrace \begin{pmatrix}
    m & n \\
    0 & m
\end{pmatrix} :m\in \F_q^\times,~ n\in \F_q  \right\rbrace$. 
For $\F_{q^2}^\times\backslash\GL_2(\F_q)/T_2$, a set of distinct representatives  is given by
$\Gamma_{2} : =\left\lbrace  \gamma(0,z) : z\in \F_q^\times \right\rbrace$.
\end{enumerate}
\end{lemma}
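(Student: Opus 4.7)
The plan is to handle the four parts separately, with (b) being the only substantive matrix calculation.

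For part (a), I would argue by counting: since $|\F_{q^2}^\times \backslash \GL_2(\F_q)| = q(q-1) = |\Gamma|$, it suffices to show that the elements of $\Gamma$ lie in pairwise distinct left cosets. If $M\gamma(y,z) = \gamma(y',z')$ for some $M \in \F_{q^2}^\times$, then comparing first columns and invoking the explicit embedding (\ref{F_q^2 embedding}) forces the parameters of $M$ to satisfy $a_0 = 1, a_1 = 0$, i.e.\ $M = I$, hence $(y,z) = (y',z')$.

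For part (b), I would reformulate the condition: $\gamma(y,z)$ and $\gamma(y',z')$ lie in the same $\F_{q^2}^\times$-double coset if and only if there exists a nonzero $M \in \F_{q^2}^\times$ with $\gamma(y,z)\,M\,\gamma(y',z')^{-1} \in \F_{q^2}^\times$. Parametrising $M$ by $(p,q) \in \F_q^2 \setminus \{(0,0)\}$ through (\ref{F_q^2 embedding}), the $(1,1)$ and $(2,1)$ entries of $\gamma(y,z)M\gamma(y',z')^{-1}$ determine its candidate position in the embedded $\F_{q^2}$, while matching the $(1,2)$ and $(2,2)$ entries produces two homogeneous $\F_q$-linear equations in $(p,q)$. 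A nontrivial solution exists iff the associated $2 \times 2$ determinant vanishes; a careful expansion identifies this determinant, up to sign, with
\begin{align*}
z'\bigl[y^2 + 2ay(z-1) + (1+z^2)(a^2-b^2\alpha)\bigr] - z\bigl[y'^2 + 2ay'(z'-1) + (1+z'^2)(a^2-b^2\alpha)\bigr],
\end{align*}
which vanishes precisely when the two quotients by $z$ and $z'$ agree, as claimed.

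For parts (c) and (d), I would analyse the right action of $T_1$ and $T_2$ on the transversal $\Gamma$ for $\F_{q^2}^\times \backslash \GL_2(\F_q)$. Since $m \in \F_q^\times \subset \F_{q^2}^\times$, direct matrix multiplication yields
\begin{align*}
\gamma(y,z)\begin{pmatrix} m & 0 \\ 0 & m' \end{pmatrix} \in \F_{q^2}^\times \cdot \gamma\bigl(ym'/m,\, zm'/m\bigr),
\qquad
\gamma(y,z)\begin{pmatrix} m & n \\ 0 & m \end{pmatrix} \in \F_{q^2}^\times \cdot \gamma\bigl(y + n/m,\, z\bigr).
\end{align*}
Hence the $T_1$-orbit of $\gamma(y,z)$ in $\Gamma$ is $\{\gamma(yt,zt) : t \in \F_q^\times\}$, which meets $\Gamma_1$ in the unique point $\gamma(y/z, 1)$; similarly the $T_2$-orbit is $\{\gamma(y+s,z) : s \in \F_q\}$, which meets $\Gamma_2$ in the unique point $\gamma(0,z)$. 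The orbit sizes $q-1$ and $q$, respectively, give the right number of double cosets, completing (c) and (d).

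The main obstacle I expect is the expansion in part (b): identifying the $2 \times 2$ determinant with the clean symmetric expression is bookkeeping-heavy and error-prone, whereas parts (a), (c), and (d) reduce to transparent orbit-level computations once the explicit embedding (\ref{F_q^2 embedding}) is written down.
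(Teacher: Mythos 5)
Your proposal is correct. The paper explicitly states this lemma ``without proof'' (calling it ``easy''), so there is no paper argument to compare against; your argument would be a legitimate proof of it. The counting in (a) is right: $|\F_{q^2}^\times\backslash\GL_2(\F_q)| = q(q-1) = |\Gamma|$, and comparing first columns with the embedding (\ref{F_q^2 embedding}) does force $M=I$. In (b), reducing to a $2\times 2$ homogeneous linear system in the parameters $(p,q)$ of $M$ is the right move, and I checked the resulting determinant: writing out the two conditions $c_{22}=c_{11}+2ac_{21}$ and $c_{12}=-c_{21}(a^2-b^2\alpha)$ on $\gamma(y,z)M\gamma(y',z')^{-1}$ and expanding, the determinant equals
$$
z'\bigl(y^2+2ay(z-1)+(1+z^2)(a^2-b^2\alpha)\bigr)-z\bigl(y'^2+2ay'(z'-1)+(1+z'^2)(a^2-b^2\alpha)\bigr),
$$
exactly as you claim. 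For (c) and (d), the right-multiplication computations on the transversal $\Gamma$ are correct: since the first column of $\gamma(y,z)\cdot t$ for $t\in T_1$ or $T_2$ is $(m,0)^T$, the $\F_{q^2}^\times$-part is the scalar $m$, which gives the orbits $\{\gamma(yt,zt)\}$ and $\{\gamma(y+s,z)\}$, each meeting $\Gamma_1$, $\Gamma_2$ respectively in exactly one point, and the orbit counts $q$ and $q-1$ match $|\Gamma_1|$ and $|\Gamma_2|$. The only thing you left implicit in (b) is the actual expansion of the determinant, which you flagged as bookkeeping; it does work out, so the argument is complete.
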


\begin{remark}
Let $H,K$ be subgroups of $G=\GL_2(\cO_2)$. 
Assume that the quotient map $G \rightarrow \bar{G}= \GL_2(\F_q)$ induces a bijection between the double cosets $K\backslash G/H$ and $\bar{K}\backslash \bar{G}/\bar{H}$, where $\bar{H}, \bar{K}$ denote the images of $H, K$ in $\bar{G}$.
Then, for a representative $\gamma\in \bar{G}$ of a double coset in $\bar{K}\backslash \bar{G}/\bar{H}$, we write its lift to $G$ by the same letter $\gamma$ as a representative of the corresponding double coset in $K\backslash G/H$.
\end{remark}
The following proposition describes the characters $\phi_{B}$ of $J^{1}_{2} \cong M_{2}(\F_q)$ which appear in the restriction of $\Pi$ to ${J^{1}_{2}}$.
\begin{proposition}\label{Matrix B in witt ring side}
For $B\in M_2(\F_q)$, $\phi_B$ appears in $\Pi|_{J^{1}_{2}}$ if and only if $\exists ~m, n \in \F_{q}$ such that
$$B\sim 2X_1-\begin{pmatrix}
    m & n(a^2-b^2\alpha)+2am\\
    n & -m
\end{pmatrix}.$$ 
\end{proposition}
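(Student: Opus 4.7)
The plan is to apply Mackey's restriction formula to
\[
\Pi = \Ind_{\mathfrak{O}_2^\times}^{\GL_2(\mathfrak{o}_2)}\big(\theta|_{\mathfrak{O}_2^\times}\big)
\]
with respect to the normal subgroup $J^1_2$, and then exploit the fact that $J^1_2 \cong M_2(\F_q)$ is abelian to break each Mackey piece into a sum of one-dimensional characters $\phi_B$.

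First I would identify the double cosets. Since $J^1_2$ is the kernel of the reduction $\GL_2(\mathfrak{o}_2) \to \GL_2(\F_q)$ and $\mathfrak{O}_2^\times \cap J^1_2 = 1 + \varpi \mathfrak{O}_1$, the quotient map induces a bijection $\mathfrak{O}_2^\times \backslash \GL_2(\mathfrak{o}_2) / J^1_2 \cong \F_{q^2}^\times \backslash \GL_2(\F_q)$, for which Lemma~\ref{Various double cosets1}(a) gives $\Gamma$ as a set of representatives. Setting $N_0 = \mathfrak{O}_2^\times \cap J^1_2$, Mackey theory yields
\[
\Pi|_{J^1_2} \cong \bigoplus_{\gamma \in \Gamma} \Ind^{J^1_2}_{\gamma^{-1} N_0 \gamma} \big(\theta|_{N_0}\big)^{\gamma^{-1}}.
\]

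Next I would compute $\theta|_{N_0}$ explicitly via the chain $\mathfrak{O}_2^\times \hookrightarrow \mathcal{O}_2^\times \hookrightarrow \GL_4(\mathfrak{o}_2)$. For $c \in \mathfrak{O}_1 \cong \F_{q^2}$, the element $1 + \varpi c$ sits in $\GL_4(\mathfrak{o}_2)$ as $I + \varpi \cdot \diag([c], [c])$ by Equation~(\ref{Fq4 embedding}) with $a_2 = a_3 = 0$, where $[c] \in M_2(\F_q)$ is the image of $c$ under (\ref{F_q^2 embedding}). Because $\tilde{x}$ has block form $\begin{pmatrix} X_1 & X_2\\ X_3 & X_1\end{pmatrix}$, we obtain $\tr(\tilde{x}\cdot\diag([c],[c])) = 2\tr(X_1 [c])$, hence $\theta(1+\varpi c) = \psi_0(\varpi\tr(2X_1 [c]))$. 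Under the identification $N_0 \hookrightarrow J^1_2 \cong M_2(\F_q)$ this says exactly $\theta|_{N_0} = \phi_{2X_1}|_{N_0}$.

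Since $J^1_2$ is abelian, $\Ind^{J^1_2}_{\gamma^{-1} N_0 \gamma}(\theta|_{N_0})^{\gamma^{-1}}$ is the direct sum of \emph{all} characters of $J^1_2$ that restrict on $\gamma^{-1}N_0\gamma$ to $(\theta|_{N_0})^{\gamma^{-1}}$. Unwinding the identification $J^1_2 \cong M_2(\F_q)$, the character $\phi_B$ is such an extension precisely when
\[
\tr\big((\gamma B \gamma^{-1} - 2X_1)\cdot [c]\big) = 0 \qquad \text{for all } c \in \F_{q^2},
\]
that is, when $\gamma B\gamma^{-1} - 2X_1$ lies in the trace-orthogonal complement of the image of $\F_{q^2}$ in $M_2(\F_q)$.

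The remaining step is a short linear-algebra computation: using $\{I, [\beta^2]\}$ as a basis of the image of $\F_{q^2}$, one finds that the orthogonal complement is exactly
\[
\left\{\begin{pmatrix} m & n(a^2-b^2\alpha)+2am\\ n & -m\end{pmatrix} : m,n \in \F_q\right\}.
\]
So $\phi_B$ appears in $\Pi|_{J^1_2}$ iff $B$ is $\GL_2(\F_q)$-conjugate to $2X_1$ minus such a matrix, which is the assertion. The only subtlety is to note that restricting $\gamma$ to representatives $\Gamma$ of $\F_{q^2}^\times\backslash\GL_2(\F_q)$ rather than all of $\GL_2(\F_q)$ still yields every $\GL_2(\F_q)$-conjugate of $2X_1 - M$; this is because $\F_{q^2}^\times$ commutes with $X_1$ and with every $[c]$, so conjugation by $\F_{q^2}^\times$ preserves both $2X_1$ and the orthogonal complement, letting the left coset freedom be absorbed harmlessly. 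I do not foresee a genuine obstacle — the proof is essentially Mackey plus one explicit orthogonal-complement computation.
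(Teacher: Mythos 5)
Your proposal is correct and follows essentially the same path as the paper: Mackey restriction of $\Pi$ to the normal abelian subgroup $J^1_2$, identification of double cosets with $\F_{q^2}^\times\backslash\GL_2(\F_q)$, the observation that $\theta$ agrees with $\phi_{2X_1}$ on $\mathfrak{O}_2^\times\cap J^1_2$, and the trace-orthogonal-complement computation; the paper packages the last steps as an inner-product sum $\langle\Theta_\Pi,\phi_B\rangle_{J^1_2}$ rather than as a character decomposition of the induced representation, but these are the same argument. Your explicit justification of why replacing $\gamma\in\GL_2(\F_q)$ by representatives in $\Gamma$ is harmless (via $\F_{q^2}^\times$ commuting with $X_1$ and preserving the orthogonal complement) is a point the paper leaves implicit, and it is handled correctly.
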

\begin{proof}
Recall that the character $\phi_B:M_2(\F_q)\rightarrow\C^\times$ is given by $\phi_B(A)=\psi_0(\tr(BA))$. 
Since $\Pi$ is an induced representation, by Mackey theory we have $$\Pi|_{J^1_2}=\underset{\gamma\in \mathfrak{O}_2^\times\backslash \GL_2(\cO_2)/J^1_2}{\bigoplus}\Ind_{\gamma^{-1}\mathfrak{O}_2^\times\gamma \cap J^1_2}^{J^1_2}\left(\theta^{\gamma^{-1}}\right).$$ 
Using Frobenius reciprocity, we get 
\begin{align}
\left\langle \Theta_{\Pi},\phi_B\right\rangle_{J^1_2}
= \underset{\gamma\in \mathfrak{O}_2^\times\backslash \GL_2(\cO_2)/J^1_2}{\sum}\left\langle \theta^{\gamma^{-1}},\phi_{B}\right\rangle_{\gamma^{-1}(\mathfrak{O}_2^\times)\gamma\cap J^1_2}.
\end{align}
Since $J^{1}_{2}$ is a normal subgroup, $\gamma^{-1}(\mathfrak{O}_2^\times)\gamma\cap J^1_2 = \gamma^{-1} (\mathfrak{O}_2^\times \cap J^{1}_{2}) \gamma = \gamma^{-1} K \gamma$ where $K=\mathfrak{O}_2^\times \cap J^{1}_{2}$. 
Note that there is a bijection between $\mathfrak{O}_2^\times\backslash \GL_2(\cO_2)/J^1_2$ and $\F_{q^2}^\times\backslash \GL_2(\F_q)$. Using  the Lemma \ref{Various double cosets1} (a) we get, 
\begin{align}\label{equation 7.2}
\left\langle \Theta_{\Pi},\phi_B\right\rangle_{J^1_2} 
=\underset{\gamma\in \Gamma}{\sum}\left\langle \theta^{\gamma^{-1}},\phi_{B}\right\rangle_{\gamma^{-1}(K)\gamma} =\underset{\gamma\in \Gamma}{\sum}\left\langle \theta,\phi_{B}^{\gamma}\right\rangle_{K}.
\end{align}
Note that $\theta|_{K}=\phi_{2X_1}|_{K}$, which gives
\begin{align} \label{equation 1 from proposition 7.3}
\left\langle \theta,\phi_{B}^{\gamma}\right\rangle_{K}
=\left\langle \phi_{2X_1},\phi_{B}^{\gamma}\right\rangle_{K}
& =\dfrac{1}{|K|}\underset{A\in K}{\sum}\psi_0(\tr  (2X_1A))\overline{\psi_0}(\tr(\gamma B\gamma^{-1} A)) \nonumber \\
& =\dfrac{1}{|K|}\underset{A\in K}{\sum}\psi_0( \tr(2X_1-\gamma B\gamma^{-1})A).
\end{align}
We know that $\tr:M_2(\F_q)\times M_2(\F_q)\rightarrow \F_q$ is a non-degenerate bilinear map. 
Using the isomorphism $J^1_2\cong M_2(\F_q)$, $K$ is identified with $\left\lbrace\begin{pmatrix}
     b_0 & -b_1(a^2-b^2\alpha)\\
    b_1 & b_0+2ab_1
\end{pmatrix}~:~b_0,b_1\in \F_q\right\rbrace$.
It can be checked that
\begin{center}
$\left\lbrace C\in M_2(\F_q):\tr(CD)=0~\forall~D\in K\right\rbrace=\left\lbrace\begin{pmatrix}
    m & n(a^2-b^2\alpha)+2am\\
    n & -m
\end{pmatrix}:m,n\in \F_q\right\rbrace$.
\end{center}
Note that $\langle \theta,\phi_{B}^{\gamma}\rangle_{K}$ is either $0$ or $1$. 
Moreover, $\left\langle \theta,\phi_{B}^{\gamma}\right\rangle_{K} =1$ if and only if 
\begin{align}\label{two characters restricted to K}
 \exists ~m,n \in \F_{q} \text{~such that~} 2X_1-\gamma B\gamma^{-1} = \begin{pmatrix}
   m & n(a^2-b^2\alpha)+2am\\
    n & -m
\end{pmatrix}.
\end{align}
Therefore, $\phi_B$ appears in $\Pi|_{J^1_2}$ if and only if  $B$ satisfies (\ref{two characters restricted to K}) for some $\gamma \in \Gamma$ and $m,n \in \F_{q}$.
\end{proof}

\begin{corollary}\label{2X_1 multiplicity is one}
If $X_1\in \F_{q^2}\backslash\F_q$, then $\phi_{2X_1}$ appears in $\Pi|_{J^1_2}$ with multiplicity one.
\end{corollary}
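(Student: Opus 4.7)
The plan is to read off the multiplicity directly from equations (\ref{equation 7.2}) and (\ref{equation 1 from proposition 7.3}) inside the proof of Proposition~\ref{Matrix B in witt ring side}: these identify
$$\langle \Theta_{\Pi}, \phi_{2X_1}\rangle_{J^1_2} = \sum_{\gamma \in \Gamma} \langle \theta, \phi_{2X_1}^{\gamma}\rangle_K$$
with the number of representatives $\gamma \in \Gamma$ for which $2X_1 - \gamma(2X_1)\gamma^{-1}$ lies in the annihilator
$$A^{\perp} = \left\{ \begin{pmatrix} m & n(a^2-b^2\alpha) + 2am \\ n & -m \end{pmatrix} : m, n \in \F_q \right\}$$
of $K$ under the trace form on $M_2(\F_q)$. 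Since $\gamma(0,1) = I$ trivially contributes $1$, the whole statement reduces to showing that no other $\gamma(y,z) \in \Gamma$ satisfies the containment.

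To exploit the hypothesis $X_1 \notin \F_q$, I would write $2X_1 = 2a_0 I + 2a_1 \xi$, where $\xi \in M_2(\F_q)$ is the image of $\beta^2$ under the embedding (\ref{F_q^2 embedding}). Because $a_1 \neq 0$, the containment $2X_1 - \gamma(2X_1)\gamma^{-1} \in A^{\perp}$ is equivalent to $\xi - \gamma\xi\gamma^{-1} \in A^{\perp}$. I would then compute $\gamma \xi \gamma^{-1}$ explicitly for $\gamma = \begin{pmatrix} 1 & y \\ 0 & z \end{pmatrix}$; the tracelessness of $\xi - \gamma\xi\gamma^{-1}$ is automatic, one further entry of $A^{\perp}$ is determined by another, so exactly one scalar condition remains.

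The one real calculation is to check that this remaining condition, after clearing the denominator $z$, becomes
$$y^2 + 2ay(z-1) + (a^2 - b^2\alpha)(z-1)^2 = 0,$$
which factors (by completing the square in $y$) as $(y + a(z-1))^2 = b^2\alpha (z-1)^2$. If $z = 1$ this forces $y = 0$, yielding $\gamma = I$. If $z \neq 1$, then $\alpha = \bigl((y+a(z-1))/(b(z-1))\bigr)^2$ would be a square in $\F_q$; note that $b \neq 0$, because if $b=0$ then $a+b\sqrt{\alpha} = a \in \F_q^\times$ and every element of $\F_q^\times$ is a square in $\F_{q^2}^\times$, contradicting the standing assumption that $a+b\sqrt{\alpha} \in \F_{q^2}^\times \setminus \F_{q^2}^{\times 2}$. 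The conclusion $\alpha \in \F_q^{\times 2}$ then contradicts the choice of $\alpha$.

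The main obstacle is the verification of the quadratic relation above: it is a straightforward but bookkeeping-intensive matrix computation, and one has to be careful to confirm that only one of the defining relations of $A^{\perp}$ is nontrivial after substitution. Once that relation is in hand, the dichotomy on $z$ finishes the proof at once, giving multiplicity exactly one.
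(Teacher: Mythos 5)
Your proposal is correct and follows essentially the same route as the paper: both plug $B = 2X_1$ into Equations (7.2)--(7.5) to reduce the multiplicity to counting $\gamma(y,z) \in \Gamma$ for which $2X_1 - \gamma(2X_1)\gamma^{-1}$ lies in the trace-form annihilator of $K$, and both reach the same quadratic condition $y^2 + 2ay(z-1) + (a^2-b^2\alpha)(z-1)^2 = 0$. The paper simply asserts that this locus consists of $\gamma = I$ alone, whereas you supply the missing justification by completing the square and invoking that $\alpha \notin \F_q^{\times 2}$ (and that $b \neq 0$, which also follows immediately from assumption (2.1) since $b=0$ would make $a^2 - b^2\alpha = a^2$ a square).
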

\begin{proof}
Using Equation (\ref{equation 7.2}) , (\ref{equation 1 from proposition 7.3}) and (\ref{two characters restricted to K}) for $B=2X_1$, we have
\begin{align*}
\left\langle \Theta_{\Pi},\phi_{2X_1}\right\rangle_{J^1_2} & 
=\dfrac{1}{|K|}\underset{\gamma\in \Gamma,~A\in K}{\sum}\psi_0( \tr(2X_1- \gamma 2X_1\gamma^{-1})A)
\\
& = \left|\left\lbrace \gamma \in \Gamma:~2X_1-\gamma 2X_1\gamma^{-1}\in \left\lbrace \begin{pmatrix}
    m & n(a^2-b^2\alpha)+2am\\
    n & -m
\end{pmatrix}:m,n\in \F_q\right\rbrace\right\rbrace\right|
\\
& =\left|\left\lbrace\gamma(y,z) \in \Gamma :\frac{y^2+2ay(z-1)+(1+z^2)(a^2-b^2\alpha)}{z}=2(a^2-b^2\alpha)~\right\rbrace\right|
\\
& =1.\qedhere
\end{align*}
\end{proof}
\begin{lemma}\label{Matrices in Pi}
Let $B = 2X_1-\begin{pmatrix}
   m & n(a^2-b^2\alpha)+2am\\
    n & -m
\end{pmatrix}$ for some $m,n \in \F_{q}$.
\begin{enumerate}[label = {(\alph*)}]
\item  If $X_1\in \F_q$, then either $B=2X_1$ or $B$ is a semisimple matrix with trace $\tr(2X_1)$.
\item If $X_1 \in \F_{q^2} \backslash \F_q$, then $B$ is a regular matrix with trace $\tr(2X_1)$.
\end{enumerate}
\end{lemma}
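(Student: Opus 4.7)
The plan is to prove both parts by a direct matrix computation, using only the arithmetic constraints on $(a, b, \alpha)$ imposed by the square-freeness assumption (\ref{Norm is square free}). First, since the subtracted matrix $\begin{pmatrix} m & n(a^2-b^2\alpha)+2am \\ n & -m \end{pmatrix}$ has trace zero, one immediately gets $\tr(B) = \tr(2X_1)$, which settles the trace statement in both parts. Before diving into the cases, I would record two input facts extracted from assumption (\ref{Norm is square free}): the element $\alpha$ is a non-square in $\F_q$, and $b \neq 0$ (since if $b=0$ then $a^2 - b^2\alpha = a^2$ would be a square in $\F_q$, contradicting that the norm is square free).

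For part (a), $X_1 \in \F_q$ forces $a_1 = 0$, so $X_1 = a_0 I$ and $B - 2a_0 I = -\begin{pmatrix} m & n(a^2-b^2\alpha)+2am \\ n & -m \end{pmatrix}$, a trace-zero matrix. A direct expansion gives
$$\det(B - 2a_0 I) = -m^2 - 2amn - n^2(a^2-b^2\alpha) = -(m+an)^2 + n^2 b^2 \alpha.$$
If $n = 0$ then this vanishes iff $m = 0$, which is precisely the case $B = 2X_1$. If $n \neq 0$, then vanishing of the determinant would force $(m+an)/(bn) = \pm\sqrt{\alpha}$, contradicting the non-squareness of $\alpha$ in $\F_q$. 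Hence when $(m,n) \neq (0,0)$ the matrix $B - 2a_0 I$ has nonzero determinant together with zero trace, so its characteristic polynomial $t^2 + \det(B - 2a_0 I)$ has nonzero discriminant and thus two distinct roots; consequently $B$ is (regular) semisimple with trace $\tr(2X_1)$.

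For part (b), $a_1 \neq 0$, and the goal is to show that $B$ is never scalar. I would argue by contradiction: suppose $B = \lambda I$. Taking traces gives $\lambda = 2a_0 + 2aa_1$. Writing out $B - \lambda I$ entry by entry, the $(2,1)$-entry $2a_1 - n$ vanishing forces $n = 2a_1$, and each diagonal entry vanishing forces $m = -2aa_1$. Substituting these values into the $(1,2)$-entry, which equals $-(2a_1+n)(a^2-b^2\alpha) - 2am$, gives
$$-(4a_1)(a^2 - b^2\alpha) - 2a(-2aa_1) = -4a_1 a^2 + 4a_1 b^2 \alpha + 4a^2 a_1 = 4a_1 b^2 \alpha.$$
Since $a_1 \neq 0$, $b \neq 0$ and $\alpha \neq 0$, this cannot vanish, contradicting $B = \lambda I$. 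Hence $B$ is non-scalar, which for a $2 \times 2$ matrix is the same as being regular.

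The main obstacle is purely bookkeeping: one must keep track of the shape of $2X_1$ from (\ref{F_q^2 embedding}) and carry the symbolic simplification through carefully. The substantive mathematical input is minimal, consisting exactly of the two consequences of (\ref{Norm is square free}) isolated at the start, namely that $\alpha$ is a non-square in $\F_q$ and that $b \neq 0$.
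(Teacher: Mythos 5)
Your proof is correct and follows essentially the same approach as the paper's: both arguments reduce to a direct matrix computation of the characteristic polynomial of $B$ and use the two consequences of assumption (\ref{Norm is square free}) — that $\alpha$ is a non-square in $\F_q$ and that $b\neq 0$. Your part (b) is organized slightly more economically (you show directly that $B$ cannot be scalar, invoking that non-scalar $2\times 2$ matrices are regular, rather than splitting on whether the paper's quantity $D$ vanishes), but the underlying computation — forcing $n=2a_1$, $m=-2aa_1$ from the off-diagonal and diagonal entries and then obtaining $4a_1b^2\alpha\neq 0$ — is identical.
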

\begin{proof}
It is clear that $\tr(B)= \tr(2X_1)$.
The characteristic polynomial of $B$ is given by
$$\left(\lambda-2(a_0+aa_1)\right)^2 -\left( 4a_1^2b^2\alpha + (m+an)^2 -b^2n^2 \alpha \right).$$ 
Clearly, if $D := 4a_1^2b^2\alpha + (m+an)^2 -b^2n^2 \alpha \neq 0$, then $B$ will be semisimple. 
If $D = 0$, then $B$ is either scalar or split non-semisimple with trace $\tr(2X_1)$.
\begin{enumerate}[label = {(\alph*)}]
\item Let $X_1 \in \F_q$ i.e. $a_1=0$. 
Then, $D=0 \implies  m=0, n=0 \implies B=2X_1$ which is scalar. 
\item Let $X_1 \in \F_{q^2}\backslash \F_{q}$, i.e. $a_1 \neq 0$.
If $D=0$ and $B$ is the scalar matrix with trace $\tr(2X_1)$ then by comparing the entries we get $m=-2aa_1, n=2a_1$ and $4a_1 b^{2} \alpha =0$.
Since $a_1 \neq 0$, $4a_1 b^2 \alpha =0$ is not possible, a contradiction.
In fact, the choice $m=-2aa_1, n=2a_1$ gives a split non-semisimple regular matrix. \qedhere 
\end{enumerate}
\end{proof}
Let $\phi_{B}$ be a character of $J^{1}_{2}$ and $\sigma = \Ind_{I(\phi_B)}^{\GL_2(\mathfrak{o}_2)}\tilde{\phi}_B$.
By Mackey theory, it follows that 
\begin{align}\label{Equation for Pi,sigma}
\left\langle \Theta_{\Pi},\Theta_{\sigma}\right\rangle_{\GL_2(\cO_2)} 
=\left\langle \theta, \Theta_{\sigma}\right\rangle_{\mathfrak{O}_2^\times}
& =\underset{\gamma\in \mathfrak{O}_2^\times\backslash \GL_2(\mathfrak{o}_2)/I(\phi_B)}{\sum} \left\langle \theta, \tilde{\phi}_{B}^{\gamma} \right\rangle_{\gamma \left(I(\phi_B)\right) \gamma^{-1}\cap \mathfrak{O}_2^\times}.
\end{align}

\begin{lemma}
\label{reduction of calculation for multiplicity 1}
Let $B \in M_{2}(\F_{q})$ be a non-split semisimple matrix with trace $\tr(2X_1)$. 
Let $\tilde{\phi}_{B} : I(\phi_{B}) \rightarrow \C^{\times}$ be an extension of $\phi_{B}$ and $\sigma = \Ind_{I(\phi_B)}^{\GL_2(\mathfrak{o}_2)}\tilde{\phi}_B$.
Then,
\begin{equation}\label{Inner product of characters in Pi}
\langle \Theta_{\Pi},\Theta_{\sigma}\rangle_{\GL_2(\cO_2)}=\left\langle\theta,\tilde{\phi}_{B} \right\rangle_{\mathfrak{O}_2^\times}+\left\langle\theta,\tilde{\phi}_{B}^{\gamma_0} \right\rangle_{\mathfrak{O}_2^\times}+\sum\limits_{\substack{\gamma\in \Gamma_0\text{~such that~}\\
\gamma\notin\{I,\gamma_0\}}}\left\langle\tilde{\phi}_x,\tilde{\phi}_{B}^{\gamma}\right\rangle_{Z\cdot K}.
\end{equation}
where $\gamma_{0} \in \Gamma_0$ is a non-trivial element in $\mathbf{\mathsf{N}}_{\GL_2(\F_q)}(\F_{q^2}^\times)$  the normalizer of $\F_{q^2}^{\times}$, and $K=I+\varpi \mathfrak{O}_2\cong \F_{q^2}$.
\end{lemma}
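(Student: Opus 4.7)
The starting point is the identity from (\ref{Equation for Pi,sigma}),
$$
\langle \Theta_\Pi, \Theta_\sigma \rangle_{\GL_2(\cO_2)} = \sum_{\gamma \in \mathfrak{O}_2^\times \backslash \GL_2(\cO_2)/I(\phi_B)} \langle \theta, \tilde{\phi}_B^\gamma \rangle_{\gamma I(\phi_B) \gamma^{-1} \cap \mathfrak{O}_2^\times}.
$$
Since $B$ is non-split semisimple, its centralizer in $\GL_2(\F_q)$ equals $\F_{q^2}^\times$, and hence $I(\phi_B) = \mathfrak{O}_2^\times \cdot J^1_2$. The quotient map $\GL_2(\cO_2) \to \GL_2(\F_q)$ then induces a bijection between $\mathfrak{O}_2^\times \backslash \GL_2(\cO_2)/I(\phi_B)$ and $\F_{q^2}^\times \backslash \GL_2(\F_q)/\F_{q^2}^\times$, so Lemma~\ref{Various double cosets1}(b) furnishes $\Gamma_0$ as a set of representatives.

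The plan is to split the sum into three cases according to how $\bar{\gamma}$ sits with respect to $\F_{q^2}^\times$. Since $J^1_2$ is normal in $\GL_2(\cO_2)$, one has $\gamma I(\phi_B) \gamma^{-1} = (\gamma \mathfrak{O}_2^\times \gamma^{-1}) \cdot J^1_2$, and hence an element $h \in \mathfrak{O}_2^\times$ lies in $\gamma I(\phi_B) \gamma^{-1}$ if and only if $\bar{h}$ lies in $\bar{\gamma}\, \F_{q^2}^\times\, \bar{\gamma}^{-1}$. For $\gamma = I$ the intersection is all of $\mathfrak{O}_2^\times$, contributing $\langle \theta, \tilde{\phi}_B \rangle_{\mathfrak{O}_2^\times}$. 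For $\gamma = \gamma_0$, the reduction $\bar{\gamma}_0$ normalises $\F_{q^2}^\times$, and a quick cardinality argument shows $\gamma_0 I(\phi_B) \gamma_0^{-1} = I(\phi_B)$; the intersection is therefore again $\mathfrak{O}_2^\times$ and the corresponding summand equals $\langle \theta, \tilde{\phi}_B^{\gamma_0} \rangle_{\mathfrak{O}_2^\times}$.

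For the remaining $\gamma \in \Gamma_0 \setminus \{I, \gamma_0\}$, the subgroups $\F_{q^2}^\times$ and $\bar{\gamma}\, \F_{q^2}^\times\, \bar{\gamma}^{-1}$ are two distinct non-split maximal tori of $\GL_2(\F_q)$, and so they intersect in the split centre $\F_q^\times$. I would then verify that the preimage of $\F_q^\times$ under $\mathfrak{O}_2^\times \twoheadrightarrow \F_{q^2}^\times$ is precisely $Z \cdot K$: both groups lie over $\F_q^\times$ inside $\mathfrak{O}_2^\times$, and a cardinality count gives $|Z \cdot K| = |Z|\,|K|/|Z \cap K| = q(q-1)\cdot q^2/q = q^2(q-1)$, matching the size of the preimage. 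This identifies $\gamma I(\phi_B) \gamma^{-1} \cap \mathfrak{O}_2^\times$ with $Z \cdot K$, so the corresponding summand is $\langle \theta, \tilde{\phi}_B^\gamma \rangle_{Z \cdot K}$. The main technical point is exactly this identification of the intersection with $Z \cdot K$; the rest is bookkeeping.

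Finally, since $Z \cdot K \subset \mathfrak{O}_2^\times \subset \mathcal{O}_2^\times$ and $\theta = \tilde{\phi}_x|_{\mathcal{O}_2^\times}$ by construction, the restrictions $\theta|_{Z \cdot K}$ and $\tilde{\phi}_x|_{Z \cdot K}$ coincide. Replacing $\theta$ by $\tilde{\phi}_x$ in the remaining summands then yields the stated formula.
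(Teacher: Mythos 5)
Your argument is correct and follows the same route as the paper: both start from Equation (\ref{Equation for Pi,sigma}), identify $I(\phi_B)=\mathfrak{O}_2^\times\cdot J^1_2$, split $\Gamma_0$ according to whether $\bar\gamma$ normalizes $\F_{q^2}^\times$, compute the intersection $\gamma I(\phi_B)\gamma^{-1}\cap\mathfrak{O}_2^\times$ to be $\mathfrak{O}_2^\times$ (in the two normalizing cosets, using that $[\mathbf{\mathsf{N}}_{\GL_2(\F_q)}(\F_{q^2}^\times):\F_{q^2}^\times]=2$) or $Z\cdot K$ (otherwise), and finish with $\theta|_{Z\cdot K}=\tilde\phi_x|_{Z\cdot K}$. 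The only difference is that you spell out the identification of the intersection with $Z\cdot K$ (two distinct non-split tori meeting in the center, plus the preimage/cardinality check), where the paper simply asserts it.
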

\begin{proof}
We use Equation (\ref{Equation for Pi,sigma}) to prove the lemma.
Note that $I(\phi_B)=\mathfrak{O}_2^\times \cdot J^{1}_{2}$ and if we write $K=I+\varpi \mathfrak{O}_2$, then
\begin{center}
$\gamma\left(\mathfrak{O}_2^\times\cdot J^1_2\right)\gamma^{-1}\cap \mathfrak{O}_2^\times
=\begin{cases}
\mathfrak{O}_2^\times & \text{~if~} \gamma\in \mathbf{\mathsf{N}}_{\GL_2(\cO_2)}(\mathfrak{O}_2^\times\cdot J^1_2)\\
Z\cdot K & \text{~if~} \gamma\notin \mathbf{\mathsf{N}}_{\GL_2(\cO_2)}(\mathfrak{O}_2^\times\cdot J^1_2).
\end{cases}$
\end{center}
Note that there is a bijection between $\mathfrak{O}_2^\times\backslash \GL_2(\mathfrak{o}_2)/\mathfrak{O}_2^\times\cdot J^1_2$ and $\F_{q^2}^\times\backslash \GL_2(\F_q)/\F_{q^2}^\times$ for which a set of representatives $\Gamma_{0}$ are given in Lemma \ref{Various double cosets1}.
Since the index of $\F_{q^2}^\times$ in 
$\mathbf{\mathsf{N}}_{\GL_2(\F_q)}(\F_{q^2}^\times)$ is 2, we get the index of $\mathfrak{O}_2^\times$ in  $\mathbf{\mathsf{N}}_{\GL_2(\cO_2)}(\mathfrak{O}_2^\times)$ is 2. 
Now, the lemma follows by observing that $\theta|_{Z \cdot K} = \tilde{\phi}_{x}|_{Z \cdot K}$.
\end{proof}

\begin{corollary} \label{phi_2X_1 representation in Pi}
Let $\tilde{\phi}_{2X_1} : I(\phi_{2X_1}) \rightarrow \C^{\times}$ be given by $\tilde{\phi}_{2X_1}=\theta\cdot\phi_{2X_1}$ and $\sigma=\Ind_{\mathfrak{O}_2^\times\cdot J^1_2}^{\GL_2(\mathfrak{o}_2)}(\theta\cdot\phi_{2X_1})$, then $\sigma$ is a subrepresentation of $\Pi$.
Moreover, if $X_1\in \F_{q^2}\backslash\F_q$, then $\sigma$ is an irreducible representation of $\GL_2(\cO_2)$ and $\sigma$ appears in $\Pi$ with multiplicity one.
\end{corollary}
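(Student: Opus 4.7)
The plan is to prove the corollary in three steps: first, establish that $\sigma$ embeds into $\Pi$ via induction by stages and Frobenius reciprocity; second, invoke Theorem \ref{constrcution} for irreducibility when $X_1 \in \F_{q^2} \setminus \F_q$; and third, deduce multiplicity one by comparing with Corollary \ref{2X_1 multiplicity is one} on the level of $J^1_2$-restrictions.

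For the first step, I would use induction by stages
$$
\Pi \;=\; \Ind_{\mathfrak{O}_2^\times}^{\GL_2(\cO_2)} \theta \;=\; \Ind_{\mathfrak{O}_2^\times \cdot J^1_2}^{\GL_2(\cO_2)} \left( \Ind_{\mathfrak{O}_2^\times}^{\mathfrak{O}_2^\times \cdot J^1_2} \theta \right),
$$
so it is enough to show that the character $\tilde{\phi}_{2X_1} = \theta \cdot \phi_{2X_1}$ of $\mathfrak{O}_2^\times \cdot J^1_2$ is a subrepresentation of $\Ind_{\mathfrak{O}_2^\times}^{\mathfrak{O}_2^\times \cdot J^1_2} \theta$. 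By Frobenius reciprocity this reduces to a Hom computation on $\mathfrak{O}_2^\times$, for which one observes $(\theta \cdot \phi_{2X_1})|_{\mathfrak{O}_2^\times} = \theta$: any $g \in \mathfrak{O}_2^\times$ is written in the decomposition $g = g \cdot I$ with trivial $J^1_2$-part, and well-definedness of the product character on $\mathfrak{O}_2^\times \cdot J^1_2$ (guaranteed by the compatibility $\theta|_K = \phi_{2X_1}|_K$ on $K = \mathfrak{O}_2^\times \cap J^1_2$ coming from $\theta = \tilde{\phi}_x|_{\mathcal{O}_2^\times}$) makes this unambiguous. Hence $\dim \Hom_{\mathfrak{O}_2^\times}(\theta, \theta) = 1$, so $\sigma \hookrightarrow \Pi$.

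For the second step, assume $X_1 \in \F_{q^2} \setminus \F_q$. Since $p \neq 2$, the element $2X_1$ lies in $\F_{q^2} \setminus \F_q$ and corresponds to a non-split semisimple matrix in $M_2(\F_q)$, which is regular. The inertia group $I(\phi_{2X_1}) = \mathfrak{O}_2^\times \cdot J^1_2$ and $\tilde{\phi}_{2X_1} = \theta \cdot \phi_{2X_1}$ is an extension of $\phi_{2X_1}$ to this inertia group. Theorem \ref{constrcution} then immediately gives that $\sigma = \Ind_{I(\phi_{2X_1})}^{\GL_2(\cO_2)} \tilde{\phi}_{2X_1}$ is an irreducible regular representation of $\GL_2(\cO_2)$.

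For the third step, suppose $\sigma$ appears in $\Pi$ with multiplicity $m$. Then $\Pi|_{J^1_2} \supseteq (\sigma|_{J^1_2})^{\oplus m}$. By Remark \ref{property of regular representation}, applied to the irreducible regular representation $\sigma$, the restriction $\sigma|_{J^1_2}$ contains $\phi_{2X_1}$ exactly once, and hence $\phi_{2X_1}$ occurs in $\Pi|_{J^1_2}$ with multiplicity at least $m$. By Corollary \ref{2X_1 multiplicity is one}, $\phi_{2X_1}$ occurs in $\Pi|_{J^1_2}$ with multiplicity exactly one, forcing $m \leq 1$. Combined with step one, $m = 1$. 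The only subtle point, which I would check carefully, is the well-definedness of $\theta \cdot \phi_{2X_1}$ as a character on the product group $\mathfrak{O}_2^\times \cdot J^1_2$; this uses both $\mathfrak{O}_2^\times \subseteq I(\phi_{2X_1})$ (so conjugation by $\mathfrak{O}_2^\times$ preserves $\phi_{2X_1}$) and the compatibility on $K$.
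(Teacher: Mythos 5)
Your proof is correct, and the multiplicity-one argument is genuinely different from the paper's. The paper computes $\langle \Theta_\Pi, \Theta_\sigma\rangle_{\GL_2(\cO_2)}$ directly via the Mackey decomposition in Lemma~\ref{reduction of calculation for multiplicity 1}, which splits the inner product into terms indexed by $\gamma \in \Gamma_0$: the trivial coset contributes $1$, the Galois coset $\gamma_0$ contributes $0$ by strong primitivity of $\theta$, and the remaining cosets are shown to contribute $0$ by a case-by-case check against condition (\ref{two characters restricted to K}). You instead bootstrap off Corollary~\ref{2X_1 multiplicity is one}, which already pins the multiplicity of $\phi_{2X_1}$ in $\Pi|_{J^1_2}$ at exactly one, and combine it with the Clifford-theoretic fact that $\phi_{2X_1}$ occurs (at least once) in $\sigma|_{J^1_2}$ to force the multiplicity of $\sigma$ in $\Pi$ to be at most one; the lower bound comes from your step one. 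This sidesteps the term-by-term computation over $\Gamma_0$ entirely. One minor remark: you do not actually need the ``exactly once'' clause from Remark~\ref{property of regular representation} — ``at least once'' suffices for the inequality $m \le 1$ — though the stronger statement is true and is used elsewhere in the paper (proof of Proposition~\ref{multplicity 1 of sigma}). Your steps one (transitivity of induction plus Frobenius reciprocity) and two (Theorem~\ref{constrcution} applied to the regular elliptic character $\phi_{2X_1}$) match the paper's argument.
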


\begin{proof}
First part follows easily from the transitivity of induction.
For $X_1 \in \F_{q^2} \backslash \F_{q}$, $B= 2X_{1}$ is non-split semisimple, we use Equation (\ref{Inner product of characters in Pi}) to compute $\left\langle \Theta_{\Pi},\Theta_{\sigma}\right\rangle_{\GL_2(\cO_2)}$.
Since $\tilde{\phi}_{2X_1}=\theta\cdot\phi_{2X_1}$and $\theta$ is a strongly primitive character of $\mathfrak{O}_2^\times$, we get 
$$
\left\langle\theta,\tilde{\phi}_{2X_1} \right\rangle_{\mathfrak{O}_2^\times}=1
\text{~and~} \left\langle\theta,\tilde{\phi}_{2X_1}^{\gamma_0^{-1}} \right\rangle_{\mathfrak{O}_2^\times}=\left\langle\theta^{\gamma_0},\tilde{\phi}_{2X_1} \right\rangle_{\mathfrak{O}_2^\times}= \left\langle \theta^{\gamma_{0}}, \theta \right\rangle_{\mathfrak{O}_2^\times} =0 .$$
It remains to prove that $\langle \theta,\tilde{\phi}_{2X_1}^{\gamma^{-1}}\rangle_{Z\cdot K}=0$ for $\gamma \notin \{ I, \gamma_{0} \}$, which can be easily verified using Equation (\ref{two characters restricted to K}) and we skip the details. 
\end{proof}

\begin{corollary}
Let $B \in M_{2}(\F_{q})$ be a non-split semisimple matrix with trace $\tr(2X_1)$ and $B\nsim 2X_1$. 
Let $\tilde{\phi}_{B}$ be an extension of $\phi_{B}$ to its inertia group $I(\phi_{B})$  such that $\tilde{\phi}_{B}|_{Z} = \omega_{\Pi}$ and $\sigma=\Ind_{I(\phi_B)}^{\GL_2(\mathfrak{o}_2)}\tilde{\phi}_B$.
Then, $\sigma$ appears in $\Pi$ with multiplicity one.
\end{corollary}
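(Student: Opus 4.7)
The plan is to evaluate $\langle \Theta_\Pi, \Theta_\sigma \rangle_{\GL_2(\cO_2)}$ via Equation \eqref{Inner product of characters in Pi} of Lemma \ref{reduction of calculation for multiplicity 1}, showing that the first two terms vanish while the remaining sum contributes exactly $1$. Throughout, we may conjugate $B$ within $\GL_2(\F_q)$ without changing $\sigma$, so we assume $B \in \F_{q^2} \subset M_2(\F_q)$ under the fixed embedding.

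For the first two summands, I plan to argue via restriction to $K = I + \varpi \mathfrak{O}_2 \cong \F_{q^2}$. Since $\theta|_K = \phi_{2X_1}|_K$ and $\tilde{\phi}_B|_K = \phi_B|_K$, nonvanishing of $\langle \theta, \tilde{\phi}_B\rangle_{\mathfrak{O}_2^\times}$ would force $\phi_B|_K = \phi_{2X_1}|_K$; the non-degeneracy of the field trace $\tr_{\F_{q^2}/\F_q}$ then gives $B = 2X_1$, contradicting $B \nsim 2X_1$. Since $\gamma_0$ acts on $\F_{q^2}$ as Frobenius, one similarly finds $\tilde{\phi}_B^{\gamma_0}|_K = \phi_{\bar B}|_K$; nonvanishing of the second inner product would force $B = \overline{2X_1}$, which is $\GL_2(\F_q)$-conjugate to $2X_1$, again a contradiction.

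For the sum over $\gamma \in \Gamma_0 \setminus \{I,\gamma_0\}$, each term equals $0$ or $1$, and equals $1$ precisely when $\gamma^{-1} B \gamma - 2X_1 \in \F_{q^2}^{\perp}$, where $\F_{q^2}^\perp$ denotes the trace-orthogonal complement described in Proposition \ref{Matrix B in witt ring side}. Since conjugation by $\F_{q^2}^\times$ preserves $\F_{q^2}^\perp$ (by trace invariance), this condition descends to a well-defined property of the double coset $\F_{q^2}^\times \gamma \F_{q^2}^\times$. Parameterizing elements of $2X_1 + \F_{q^2}^\perp$ by $(m,n) \in \F_q^2$ as in Proposition \ref{Matrix B in witt ring side} and computing the characteristic polynomial, the equation $\det(2X_1 - C(m,n)) = \det(B)$ becomes a norm equation
$$
\Norm_{\F_{q^2}/\F_q}\bigl(m + n(a+b\sqrt{\alpha})\bigr) = c_B
$$
for a constant $c_B$ determined by $\det B$. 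I will show that $c_B = 0$ is equivalent to $B$ and $2X_1$ having the same characteristic polynomial, which, since both are non-split semisimple, is equivalent to $B \sim 2X_1$; hence $c_B \neq 0$, and the norm equation has exactly $q+1$ solutions, yielding $q+1$ single cosets $\F_{q^2}^\times \gamma$ satisfying the condition.

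To conclude, each non-normalizer double coset in $\F_{q^2}^\times\backslash\GL_2(\F_q)/\F_{q^2}^\times$ contains exactly $q+1$ single cosets (using $\F_{q^2}^\times \cap \gamma^{-1} \F_{q^2}^\times \gamma = \F_q^\times$ for non-normalizing $\gamma$). Since $I$ and $\gamma_0$ do not satisfy our condition, the $q+1$ single cosets above constitute precisely one non-normalizer double coset, which lies in $\Gamma_0 \setminus \{I,\gamma_0\}$. The sum therefore equals $1$, giving $\langle \Theta_\Pi, \Theta_\sigma\rangle = 1$. The main obstacle I anticipate is the characteristic polynomial computation identifying the determinant constraint with the norm form on $\F_{q^2}$ and verifying $c_B \neq 0 \iff B \nsim 2X_1$; granting that, the remainder is a clean combinatorial consequence of the double-coset structure of $\F_{q^2}^\times$ in $\GL_2(\F_q)$.
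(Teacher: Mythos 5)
Your argument follows the same overall route as the paper: invoke Lemma~\ref{reduction of calculation for multiplicity 1} (Equation~\eqref{Inner product of characters in Pi}), kill the first two terms by restricting to $K = 1+\varpi\mathfrak{O}_2\cong\F_{q^2}$ and exploiting non-degeneracy of the trace pairing on $\F_{q^2}$ together with $B\nsim 2X_1$, and then show the residual sum over non-normalizer double cosets contributes exactly~$1$. Your handling of the $\gamma_0$-term via the Frobenius action of $\gamma_0$ on $\F_{q^2}$ is the same as the paper's (which uses $\gamma_0^{-1}$, an irrelevant difference). Where you genuinely add value is in the third step, which the paper dismisses with ``one can easily verify\,\dots\,we skip the details'': you identify the nonvanishing condition as $\gamma B\gamma^{-1} - 2X_1\in\F_{q^2}^{\perp}$ (you actually wrote $\gamma^{-1}B\gamma$, a harmless direction-of-conjugation slip, since $\gamma\mapsto\gamma^{-1}$ permutes the double cosets and preserves the normalizer/non-normalizer dichotomy), observe that agreement on $Z$ is automatic from the central-character normalization, parameterize $2X_1+\F_{q^2}^{\perp}$ by $(m,n)$ as in Proposition~\ref{Matrix B in witt ring side}, and reduce the determinant constraint to the norm equation $\Norm_{\F_{q^2}/\F_q}(m+n(a+b\sqrt\alpha))=c_B$ with $c_B=\det(2X_1)-\det(B)$. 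The equivalence $c_B=0\iff B\sim 2X_1$ (same characteristic polynomial, both non-split semisimple) is correct, so $c_B\neq 0$ and the norm equation has exactly $q+1$ solutions; since each fiber of the orbit map $\F_{q^2}^\times\backslash\GL_2(\F_q)\to\text{conj.\ class of }B$ is a single left coset and each non-normalizer double coset consists of exactly $q+1$ left cosets, these $q+1$ solutions assemble into exactly one double coset in $\Gamma_0\setminus\{I,\gamma_0\}$. This is a clean and complete justification of the step the paper leaves to the reader, and is consistent with what the authors must have had in mind.
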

\begin{proof}
Since $B$ is non-split semisimple matrix, we can take that $B \in \F_{q^2}\backslash\F_q$.
We use Equation (\ref{Inner product of characters in Pi}) to prove 
$\langle \Theta_{\Pi},\Theta_{\sigma}\rangle_{\GL_2(\cO_2)}=1$.
We first claim that 
$\left\langle\theta,\tilde{\phi}_B \right\rangle_{\mathfrak{O}_2^\times}=0$, 
$\left\langle\theta,\tilde{\phi}_B ^{\gamma_0^{-1}}\right\rangle_{\mathfrak{O}_2^\times}=0$. 
Since $1+ \varpi \mathfrak{O}_2 \subset \mathfrak{O}_{2}^{\times}$, it is enough to prove that $\left\langle\theta,\phi_B \right\rangle_{1+\varpi \mathfrak{O}_2}=0$ and $\left\langle\theta,\phi_B ^{\gamma_0^{-1}}\right\rangle_{1+\varpi \mathfrak{O}_2}=0$.
We identify $1+\varpi \mathfrak{O}_2$ with $\F_{q^2}$ under the isomorphism $J^1_2\cong M_2(\F_q)$. 
Recall that $\tr : M_2(\F_q)\times M_2(\F_q)\rightarrow \F_q$ is a non-degenerate bilinear map and so is its restriction to $\F_{q^2}$. 
Therefore, $B_1,~B_2 \in \F_{q^2}$ satisfy the following
\begin{center}
$\phi_{B_1}|_{\F_{q^2}} = \phi_{B_2}|_{\F_{q^2}}$ if and only if $B_1=B_2$. 
\end{center}
Note that $\theta|_{1+\varpi \mathfrak{O}_2}=\phi_{2X_1}|_{1+\varpi \mathfrak{O}_2}$. 
Since $B \nsim 2X_1$, we get $\phi_B|_{1+\varpi \mathfrak{O}_2} \neq \phi_{2X_1}|_{1+\varpi \mathfrak{O}_2}$ and ${\phi_B^{\gamma_{0}^{-1}}|_{1+\varpi \mathfrak{O}_2}} \neq \phi_{2X_1}|_{1+\varpi \mathfrak{O}_2}$.
Hence the claim follows.

One can easily verify that there exists unique $\gamma \in \Gamma_{0} \backslash \{ I, \gamma_0 \}$ with $\left\langle\tilde{\phi}_x,\tilde{\phi}_B^{\gamma^{-1}}\right\rangle_{Z\cdot K}=1$, for which we skip the details.
\end{proof}

\begin{lemma}\label{split semisimple sigma in Pi}
Let $B = \begin{pmatrix}
    m_1 & 0\\
    0 & 4(a_0+aa_1)-m_1
\end{pmatrix}$ be a split semisimple matrix such that $m_1\neq 2(a_0+aa_1)$. 
Let $\tilde{\phi}_{B}$ be an extension of $\phi_{B}$ to its inertia group $I(\phi_{B})$  such that $\tilde{\phi}_{B}|_{Z} = \omega_{\Pi}$ and $\sigma=\Ind_{I(\phi_B)}^{\GL_2(\mathfrak{o}_2)}\tilde{\phi}_B$. 
Then, $\sigma$ appears in $\Pi$ with multiplicity one.
\end{lemma}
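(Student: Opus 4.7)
The argument follows the template of Lemma \ref{reduction of calculation for multiplicity 1} in the non-split semisimple case, the essential new feature being that the inertia group $I(\phi_B) = T_1 \cdot J^1_2$ for a split semisimple $B$ contains a split torus $T_1$ rather than only $\mathfrak{O}_2^\times$; here $T_1 \subset \GL_2(\cO_2)$ is the (lift of the) split diagonal torus. The plan has three steps.

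By Frobenius reciprocity and Mackey's formula,
\begin{equation*}
\langle \Theta_{\Pi}, \Theta_{\sigma} \rangle_{\GL_2(\cO_2)} = \sum_{\gamma} \langle \theta, \tilde{\phi}_B^{\gamma^{-1}} \rangle_{\mathfrak{O}_2^\times \cap \gamma I(\phi_B) \gamma^{-1}},
\end{equation*}
where $\gamma$ runs over $\mathfrak{O}_2^\times \backslash \GL_2(\cO_2)/I(\phi_B)$. Reduction modulo $\varpi$ identifies this double coset space with $\F_{q^2}^\times \backslash \GL_2(\F_q)/T_1$, which by Lemma \ref{Various double cosets1}(c) is represented by $\Gamma_1 = \{\gamma(y,1) : y \in \F_q\}$, a set of size $q$.

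The first substantive step is to identify the intersection $\mathfrak{O}_2^\times \cap \gamma I(\phi_B) \gamma^{-1}$ for every $\gamma \in \Gamma_1$. Since $\F_{q^2}^\times$ (an anisotropic torus of $\GL_2(\F_q)$) meets every conjugate of the split torus $T_1$ only in the scalars $\F_q^\times$, one deduces
\begin{equation*}
\mathfrak{O}_2^\times \cap \gamma I(\phi_B) \gamma^{-1} = Z \cdot K \quad \text{for every } \gamma \in \Gamma_1,
\end{equation*}
where $K = 1 + \varpi \mathfrak{O}_2 \cong \F_{q^2}$. Using $\theta|_{Z \cdot K} = \tilde{\phi}_x|_{Z \cdot K}$ and $\tilde{\phi}_B|_Z = \omega_{\Pi} = \theta|_Z$, each summand $\langle \theta, \tilde{\phi}_B^{\gamma^{-1}} \rangle_{Z \cdot K}$ evaluates to $0$ or $1$, and equals $1$ exactly when $\phi_{2X_1}|_K = \phi_{\bar{\gamma} B \bar{\gamma}^{-1}}|_K$ under the identification $K \cong \F_{q^2}$. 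By non-degeneracy of the trace pairing (as exploited in the proof of Proposition \ref{Matrix B in witt ring side}), this translates into the explicit matrix condition
\begin{equation*}
\bar{\gamma} B \bar{\gamma}^{-1} - 2X_1 \in L := \left\{ \begin{pmatrix} m & n(a^2 - b^2\alpha) + 2am \\ n & -m \end{pmatrix} : m, n \in \F_q \right\}.
\end{equation*}

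Finally, I would verify that exactly one $\bar{\gamma} \in \Gamma_1$ satisfies this condition. Writing $\bar{\gamma} = \gamma(y,1)$, the conjugate $\bar{\gamma} B \bar{\gamma}^{-1}$ is upper triangular with the same diagonal as $B$ and $(1,2)$-entry $y \cdot (m_2 - m_1)$, where $m_2 = 4(a_0 + aa_1) - m_1$. Comparing the $(2,1)$, $(1,1)$, and $(2,2)$ entries of the proposed identity forces $n = 2a_1$ and $m = 2a_0 - m_1$, after which the $(1,2)$ equation reduces to a single linear equation in $y$ whose leading coefficient $m_2 - m_1$ is nonzero by the hypothesis $m_1 \neq 2(a_0 + aa_1)$. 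Hence $y$ is uniquely determined in $\F_q$, giving $\langle \Theta_{\Pi}, \Theta_{\sigma} \rangle_{\GL_2(\cO_2)} = 1$. The main point requiring attention is the intersection computation in the second step: if for some $\gamma \in \Gamma_1$ the intersection $\mathfrak{O}_2^\times \cap \gamma I(\phi_B) \gamma^{-1}$ were strictly larger than $Z \cdot K$, additional nontrivial contributions (analogous to the normalizer terms in Lemma \ref{reduction of calculation for multiplicity 1}) would arise and would have to be handled separately; the torus-intersection argument rules this out uniformly, and the remainder of the proof is a direct entry-by-entry calculation.
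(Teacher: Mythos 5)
Your proposal is correct and follows exactly the route the paper indicates: apply Mackey's formula (Equation \eqref{Equation for Pi,sigma}), identify the double cosets $\mathfrak{O}_2^\times\backslash\GL_2(\cO_2)/I(\phi_B)$ with $\F_{q^2}^\times\backslash\GL_2(\F_q)/T_1$ via $\Gamma_1$, observe that the intersection is uniformly $Z\cdot K$ since the anisotropic torus meets every conjugate of $T_1$ only in the scalars, and then count solutions of the trace-pairing condition $2X_1 - \gamma B\gamma^{-1}\in K^{\perp}$. You have simply spelled out the "one can easily verify" step of the paper's proof, and your entry-by-entry verification that the $(1,2)$-equation has a unique solution $y$ (using $m_1\neq m_2$) is the correct detail to supply; the minor slip of writing $\tilde{\phi}_B^{\gamma^{-1}}$ next to the intersection $\mathfrak{O}_2^\times\cap\gamma I(\phi_B)\gamma^{-1}$ only swaps $\gamma$ with $\gamma^{-1}$ and does not affect the count.
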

\begin{proof}
We use Equation (\ref{Equation for Pi,sigma}) by noting that for the given $B$, the inertia group $I(\phi_{B})$ is such that $I(\phi_{B})/J^{1}_{2}$ is the diagonal torus $T_{1}$ in $\GL_2(\F_{q})$.
Moreover, there is a bijection between $\mathfrak{O}_2^\times\backslash \GL_2(\cO_2)/I(\phi_B)$ and $\F_{q^2}^\times\backslash\GL_2(\F_q)/T_1$ for which a set of representatives $\Gamma_{1}$ is known from Lemma \ref{Various double cosets1} (c).
One can easily verify that that there exists a unique $\gamma \in \Gamma_{1}$ satisfying 
$\left\langle \theta,\tilde{\phi}_B^{\gamma}\right\rangle_{\gamma I(\phi_B)\gamma^{-1}\cap \mathfrak{O}_2^\times}=1$.
\end{proof}

\begin{lemma}
Let $X_1\in \F_{q^2}\backslash \F_q$ and $B=\begin{pmatrix}
    2(a_0+aa_1) & 1\\
    0 & 2(a_0+aa_1)
\end{pmatrix}$. Let $\tilde{\phi}_{B}$ be an extension of $\phi_{B}$ to its inertia group $I(\phi_{B})$ such that $\tilde{\phi}_{B}|_{Z} = \omega_{\Pi}$ and $\sigma = \Ind_{I(\phi_B)}^{\GL_2(\mathfrak{o}_2)}\tilde{\phi}_B$. 
Then, $\sigma$ appears in $\Pi$ with multiplicity one.
\end{lemma}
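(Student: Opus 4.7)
The plan is to follow the template established in Lemma \ref{split semisimple sigma in Pi}, applying Equation (\ref{Equation for Pi,sigma}):
$$\langle \Theta_\Pi, \Theta_\sigma\rangle_{\GL_2(\cO_2)} = \sum_{\gamma \in \mathfrak{O}_2^\times \backslash \GL_2(\cO_2)/I(\phi_B)} \langle \theta, \tilde{\phi}_B^\gamma\rangle_{\gamma I(\phi_B)\gamma^{-1} \cap \mathfrak{O}_2^\times}.$$
First I would identify $I(\phi_B)$. Since $B$ is a regular Jordan block, its centralizer in $\GL_2(\F_q)$ is the subgroup $T_2$ from Lemma \ref{Various double cosets1}(d), so $I(\phi_B)$ is the preimage of $T_2$ under $\GL_2(\cO_2) \to \GL_2(\F_q)$. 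The natural map then induces a bijection between $\mathfrak{O}_2^\times \backslash \GL_2(\cO_2)/I(\phi_B)$ and $\F_{q^2}^\times \backslash \GL_2(\F_q)/T_2$, so by Lemma \ref{Various double cosets1}(d) a set of representatives is $\Gamma_2 = \{\gamma(0,z) : z \in \F_q^\times\}$.

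Next I would pin down the intersection $\gamma I(\phi_B)\gamma^{-1} \cap \mathfrak{O}_2^\times$. The conjugate $\gamma T_2 \gamma^{-1}$ has no non-central semisimple elements, while every element of $\F_{q^2}^\times \setminus \F_q^\times$ is regular elliptic; hence the image of this intersection in $\GL_2(\F_q)$ is exactly $\F_q^\times$. Lifting, $\gamma I(\phi_B)\gamma^{-1} \cap \mathfrak{O}_2^\times = Z \cdot K$ uniformly in $\gamma$, where $K = 1 + \varpi \mathfrak{O}_2 \cong \F_{q^2}$. The central characters match automatically: $\tilde{\phi}_B^\gamma|_Z = \omega_\Pi = \theta|_Z$ by our hypothesis on $\tilde{\phi}_B$ and by the fact that $Z \subset \mathfrak{O}_2^\times$. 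On $K$ we have $\theta|_K = \phi_{2X_1}|_K$, so $\langle \theta, \tilde{\phi}_B^\gamma\rangle_{Z \cdot K}$ is $1$ or $0$ according as $\phi_{2X_1}|_K = \phi_B^\gamma|_K$ or not. By the non-degeneracy of the trace pairing, as in the proof of Proposition \ref{Matrix B in witt ring side}, this equality holds iff
$$2X_1 - \gamma B \gamma^{-1} \in \left\{ \begin{pmatrix} m & n(a^2-b^2\alpha)+2am \\ n & -m \end{pmatrix} : m, n \in \F_q\right\}.$$

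Finally, I would verify the unique solvability. For $\gamma = \gamma(0,z)$ one computes
$$\gamma B \gamma^{-1} = \begin{pmatrix} 2(a_0+aa_1) & z^{-1} \\ 0 & 2(a_0+aa_1) \end{pmatrix},$$
so
$$2X_1 - \gamma B \gamma^{-1} = \begin{pmatrix} -2aa_1 & -2a_1(a^2 - b^2\alpha) - z^{-1} \\ 2a_1 & 2aa_1 \end{pmatrix}.$$
Matching the diagonal and $(2,1)$-entries forces $m = -2aa_1$ and $n = 2a_1$, and then the $(1,2)$-entry forces $z^{-1} = 4a_1 b^2 \alpha$. Since $X_1 \in \F_{q^2}\setminus \F_q$ gives $a_1 \neq 0$ and the non-squareness of $a+b\sqrt{\alpha}$ in $\F_{q^2}$ forces $b \neq 0$, we obtain a unique $z = (4a_1 b^2 \alpha)^{-1} \in \F_q^\times$. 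Exactly one $\gamma \in \Gamma_2$ therefore contributes $1$ to the sum, and $\langle \Theta_\Pi, \Theta_\sigma\rangle_{\GL_2(\cO_2)} = 1$.

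The only real obstacle is bookkeeping: one has to be careful that $K \subset J^1_2$ is identified with $\iota_0(\F_{q^2}) \subset M_2(\F_q)$ via Equation (\ref{F_q^2 embedding}) so that the orthogonal-complement description from Proposition \ref{Matrix B in witt ring side} can be reused verbatim. After that point the argument is essentially a direct $2 \times 2$ matrix computation parallel to that in Lemma \ref{split semisimple sigma in Pi}, with $T_2$ in place of $T_1$.
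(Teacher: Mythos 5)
Your proposal is correct and follows essentially the same route as the paper, which identifies $I(\phi_B)/J^1_2 = T_2$ and then declares the rest "similar to that of Lemma \ref{split semisimple sigma in Pi}"; you have simply carried out the suppressed "one can easily verify" computation. In particular your identification of $\gamma I(\phi_B)\gamma^{-1}\cap\mathfrak{O}_2^\times = Z\cdot K$ uniformly over $\Gamma_2$, the reduction to the trace-orthogonality condition from Proposition \ref{Matrix B in witt ring side}, and the final determination $z = (4a_1b^2\alpha)^{-1}$ (with $a_1, b, \alpha$ all nonzero) are exactly the details the paper leaves implicit.
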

\begin{proof}
For the given $B$ the inertia group $I(\phi_{B})$ is such that $I(\phi_{B})/J^{1}_{2} = T_2$ as defined in Lemma \ref{Various double cosets1}. The rest of the proof is similar to that of Lemma \ref{split semisimple sigma in Pi}.
\end{proof}

We summarise the results from Section \ref{Sec : 7} in the following theorem.
\begin{theorem}\label{Complete description of Pi}
Let $\Pi = \Ind_{\mathfrak{O}_2^\times}^{\GL_2(\mathfrak{o}_2)}(\theta|_{\mathfrak{O}_2^\times})$. Recall that $\theta = \phi_{2X_1}$ when restricted to $\mathfrak{O}_{2}^{\times} \cap J^{1}_{2} \cong 1+ \varpi \mathfrak{O}_2$.
\begin{enumerate}[label = {(\alph*)}]
\item Let $X_1\in \F_q$, then $\Pi$ consists of the following representations of $\GL_2(\cO_2)$.
\begin{enumerate}[label = {(\roman*)}]
\item The representation $\Ind_{\mathfrak{O}_2^\times \cdot J^{1}_{2}}^{\GL_2(\cO_2)} \left( \tilde{\phi}_{2X_1} \right)$, which is not a regular representation and it is also not irreducible.
\item An irreducible regular representation $\sigma$ of $\GL_2(\cO_2)$ with multiplicity one such that $\omega_{\sigma}=\omega_{\Pi}$ and $\sigma|_{J^{1}_{2}}$ contains the character $\phi_B$ where $B$ is a semisimple matrix of trace $\tr(2X_1)$.
\end{enumerate}

\item Let $X_1\in \F_{q^2}\backslash\F_q$, then $\Pi$ consists of the following representations of $\GL_2(\cO_2)$.
\begin{enumerate}[label = {(\roman*)}]
\item  The irreducible regular representation $\Ind_{\mathfrak{O}_2^\times \cdot J^{1}_{2}}^{\GL_2(\cO_2)} \left( \tilde{\phi}_{2X_1} \right)$ with multiplicity one.
\item An irreducible regular representation $\sigma$ of $\GL_2(\cO_2)$ with multiplicity one such that $\omega_{\sigma}=\omega_{\Pi}$ and $\sigma|_{J^{1}_{2}}$ contains the character $\phi_B$ where $B$ is a regular matrix of trace $\tr(2X_1)$ and $B \nsim 2X_1$. 
\end{enumerate}
\end{enumerate}
\end{theorem}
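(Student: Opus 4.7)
The plan is to assemble the preceding results of Section \ref{Sec : 7}, each of which handles one type of conjugacy class of $B\in M_2(\F_q)$, and then verify completeness by matching the total dimension of the listed constituents against $\dim(\Pi)=[\GL_2(\cO_2):\mathfrak{O}_2^\times]=q^3(q-1)$.

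First, by Proposition \ref{Matrix B in witt ring side} combined with Lemma \ref{Matrices in Pi}, the characters $\phi_B$ of $J^1_2$ appearing in $\Pi|_{J^1_2}$ are, up to conjugacy, exactly those listed in the statement: in case (a), $B\sim 2X_1$ or $B$ non-scalar semisimple of trace $\tr(2X_1)$; in case (b), $B$ any regular matrix of trace $\tr(2X_1)$. By Clifford theory, every irreducible constituent of $\Pi$ containing $\phi_B$ in its $J^1_2$-restriction is of the form $\sigma=\Ind_{I(\phi_B)}^{\GL_2(\cO_2)}\tilde\phi_B$ for some extension $\tilde\phi_B$ with $\tilde\phi_B|_Z=\omega_\Pi$.

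Then I would invoke the relevant earlier result for each type of $B$. For $B\sim 2X_1$, Corollary \ref{phi_2X_1 representation in Pi} gives that $\sigma_1:=\Ind_{\mathfrak{O}_2^\times\cdot J^1_2}^{\GL_2(\cO_2)}(\tilde\phi_{2X_1})$ is a subrepresentation of $\Pi$; in case (b) it is moreover an irreducible regular representation of multiplicity one, while in case (a) the inducing character $\tilde\phi_{2X_1}$ factors, after a suitable twist (as in the proof of Corollary \ref{Pi I N psi is multiplicity free}), through $\Ind_{\F_{q^2}^\times}^{\GL_2(\F_q)}(\chi)$, making $\sigma_1$ non-regular and reducible. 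For $B$ non-split semisimple with $B\nsim 2X_1$, split semisimple, or split non-semisimple regular, the corresponding result---the corollary following Lemma \ref{reduction of calculation for multiplicity 1}, Lemma \ref{split semisimple sigma in Pi}, or the last lemma of the section---applies to every extension $\tilde\phi_B$ with the prescribed central character, yielding multiplicity one in $\Pi$. Each of these inner-product calculations rests on the Mackey decomposition of $\Pi|_{I(\phi_B)}$, expressed via the double-coset representatives $\Gamma_0,\Gamma_1,\Gamma_2$ from Lemma \ref{Various double cosets1}, and on the non-degeneracy of the trace pairing.

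Finally, I would verify the list is exhaustive via a dimension count. Using Table \ref{Table 1} to count $q+1$, $q-1$ and $q$ extensions with fixed central character per non-split-semisimple, split-semisimple and split-non-semisimple class respectively, the total dimension in case (b) comes out to
\[
q(q-1)+\tfrac{q-3}{2}(q+1)q(q-1)+\tfrac{q-1}{2}(q-1)q(q+1)+q(q^2-1)=q(q-1)\bigl[1+(q^2-1)\bigr]=q^3(q-1),
\]
which matches $\dim(\Pi)$; case (a), lacking the split non-semisimple contribution, is analogous and also totals $q^3(q-1)$. The main obstacle is the case-by-case Clifford-theoretic inner-product computation, where one must show that for each extension $\tilde\phi_B$ exactly one double coset in $\mathfrak{O}_2^\times\backslash\GL_2(\cO_2)/I(\phi_B)$ contributes nontrivially to $\langle\Theta_\Pi,\Theta_\sigma\rangle$, a fact that relies crucially on $\theta$ being strongly primitive.
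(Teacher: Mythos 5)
Your proposal is correct and takes essentially the same approach as the paper, which states this theorem as a summary of the preceding results of Section~\ref{Sec : 7} (Proposition~\ref{Matrix B in witt ring side}, Lemma~\ref{Matrices in Pi}, Corollaries~\ref{phi_2X_1 representation in Pi}--\ref{split semisimple sigma in Pi} and the final lemma) without an explicit proof. Your added dimension count, which verifies that the listed constituents exhaust $\dim(\Pi)=q^3(q-1)$, is a genuine and useful supplement: it makes the exhaustivity (in particular that the non-regular part of $\Pi$ in case (a) is exactly $\Ind_{\mathfrak{O}_2^\times\cdot J^1_2}^{\GL_2(\cO_2)}(\tilde\phi_{2X_1})$, which Corollary~\ref{phi_2X_1 representation in Pi} only shows to be a subrepresentation) explicit, where the paper leaves it implicit.
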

Using Theorem \ref{Complete description of pi_N,psi} and Theorem \ref{Complete description of Pi}, we obtain our main result verifying the Conjecture \ref{DP conjecture} of Prasad for the case $n=2, l=2$.
\begin{theorem}
Let $\pi$ be an irreducible strongly cuspidal representation of $\GL_{4}(\cO_2)$ associated to a strongly primitive character $\theta : \mathcal{O}_2^\times\rightarrow \C^{\times}$. 
Then as a representation of $\GL_{2}(\cO_2)$ we have 
$$
\pi_{N, \psi} \cong \Ind_{\mathfrak{O}_2^\times}^{\GL_{2}(\cO_2)} (\theta|_{\mathfrak{O}_2^\times}).
$$
\end{theorem}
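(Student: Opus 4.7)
The plan is to exploit the two structural theorems immediately preceding the statement, namely Theorem \ref{Complete description of pi_N,psi} describing $\pi_{N,\psi}$ and Theorem \ref{Complete description of Pi} describing $\Pi := \Ind_{\mathfrak{O}_2^\times}^{\GL_2(\cO_2)}(\theta|_{\mathfrak{O}_2^\times})$, and to verify that the two decompositions match constituent by constituent with the same multiplicities. This reduces the isomorphism claim to a bookkeeping comparison, avoiding any attempt at constructing an explicit intertwiner.

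First I would pin down the compatibility of the data on both sides. Since $\pi$ is constructed from the character $\tilde{\phi}_x$ on $I(\phi_x) = \mathcal{O}_2^\times \cdot K^1_2$, the associated strongly primitive character $\theta = \tilde{\phi}_x|_{\mathcal{O}_2^\times}$ restricts on $\mathfrak{O}_2^\times \cap J^1_2 \cong 1+\varpi\mathfrak{O}_2$ to the character $\phi_{2X_1}$, exactly the block appearing throughout Sections \ref{Sec : 6} and \ref{Sec : 7}. In particular the central characters agree: $\omega_\Pi = \theta|_Z = \tilde{\phi}_x|_Z = \omega_\pi$.

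Next I would check the dimensions. By Proposition \ref{dimension}, $\dim(\pi_{N,\psi}) = q^3(q-1)$. On the other side, a direct order count gives $|\GL_2(\cO_2)| = q^5(q^2-1)(q-1)$ and $|\mathfrak{O}_2^\times| = q^2(q^2-1)$, so $\dim \Pi = [\GL_2(\cO_2) : \mathfrak{O}_2^\times] = q^3(q-1)$. Both representations thus have the same dimension.

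Finally I would match the irreducible constituents. In the split case $X_1 \in \F_q$, Theorem \ref{Complete description of pi_N,psi}(a) and Theorem \ref{Complete description of Pi}(a) describe identical lists: the non-irreducible induced piece $\Ind_{\mathfrak{O}_2^\times\cdot J^1_2}^{\GL_2(\cO_2)}\tilde{\phi}_{2X_1}$, plus the irreducible regular representations $\sigma$ with $\omega_\sigma = \omega_\pi = \omega_\Pi$ whose restriction to $J^1_2$ contains some $\phi_B$ with $B$ semisimple of trace $\tr(2X_1)$, each appearing with multiplicity one. In the elliptic case $X_1 \in \F_{q^2}\setminus\F_q$, the two descriptions again coincide: the irreducible regular representation $\Ind_{\mathfrak{O}_2^\times\cdot J^1_2}^{\GL_2(\cO_2)}\tilde{\phi}_{2X_1}$ occurs with multiplicity one, and the remaining constituents are precisely the irreducible regular $\sigma$ with matching central character and $\phi_B \subset \sigma|_{J^1_2}$ for some regular $B$ with $\tr(B) = \tr(2X_1)$ and $B \nsim 2X_1$. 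Combining the matching lists with the common dimension count, one obtains the claimed isomorphism. The main technical point, which is already essentially hidden inside Theorem \ref{Complete description of pi_N,psi}, is the verification that every such $B$ actually arises as some $\mathcal{B}_\delta$ for $\delta \in \Omega_0$ (via the trace-and-determinant analysis following Theorem \ref{Matrices conjugate iff cosets are same}); this is the only place where something more than routine bookkeeping is needed, and I would expect to invoke the multiplicity-freeness from Theorem \ref{multiplicity one} to guarantee that the count of constituents on the $\pi_{N,\psi}$-side saturates the $q+1$ (resp.\ $q$) classes appearing on the $\Pi$-side.
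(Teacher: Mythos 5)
Your proposal is correct and follows essentially the same route as the paper: the final theorem is obtained by comparing Theorem \ref{Complete description of pi_N,psi} with Theorem \ref{Complete description of Pi}, matching constituent lists and multiplicities, with the dimension equality $\dim(\pi_{N,\psi}) = q^3(q-1) = \dim(\Pi)$ already noted after Proposition \ref{dimension} and the central-character compatibility $\omega_\pi = \omega_\Pi$ built into the setup. You have merely made explicit the bookkeeping that the paper leaves implicit in its one-line deduction.
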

\section*{Acknowledgement}
The authors thank Dipendra Prasad for several helpful conversations and insightful comments. 
This work is a part of AP's Thesis at IIT Delhi and she acknowledges the support of PMRF fellowship (1400772). 
SPP acknowledges the support of the SERB MATRICS grant (MTR/2022/000782).

\bibliographystyle{abbrv}
\bibliography{refs}

\end{document}